\documentclass[12pt,a4paper]{article}
\usepackage{ifthen} 
\usepackage{calc} 

\usepackage[german,english]{babel} 

\usepackage[all]{xy}
\usepackage{amsmath}
\usepackage{amssymb}
\usepackage{latexsym}
\usepackage{enumerate}
\usepackage{theorem}
\usepackage{mathrsfs}
\usepackage{helvet}

\usepackage{lastpage} 
\usepackage{prettyref} 
\usepackage[pagebackref]{hyperref} 

\input xy 





\setlength{\textheight}{\topskip+47\baselineskip}
\setlength{\topmargin}{0pt}

\setlength{\textwidth}{391pt}
\setlength{\evensidemargin}{42pt}

\parskip0pt plus2pt

\newboolean{finalversion} 
\setboolean{finalversion}{true}

\theoremheaderfont{\scshape}

\newrefformat{chap}{\hyperref[{#1}]{Chapter~\ref*{#1}}}
\newrefformat{sec}{\hyperref[{#1}]{Section~\ref*{#1}}}

\newtheorem{Notation}{Notation}
\newtheorem{Convention}{Convention}
\newtheorem{Def-Prop}{Definition-Proposition}
\newtheorem{Proof-Expl}{Proof-Explanation}
\newtheorem{theorem}{Theorem}[section]
\newrefformat{thm}{\hyperref[{#1}]{Theorem~\ref*{#1}}}
\newtheorem{definition}[theorem]{Definition}
\newrefformat{def}{\hyperref[{#1}]{Definition~\ref*{#1}}}
\newtheorem{lemma}[theorem]{Lemma}
\newrefformat{lem}{\hyperref[{#1}]{Lemma~\ref*{#1}}}
\newtheorem{proposition}[theorem]{Proposition}
\newrefformat{prop}{\hyperref[{#1}]{Proposition~\ref*{#1}}}
\newtheorem{corollary}[theorem]{Corollary}
\newrefformat{cor}{\hyperref[{#1}]{Corollary~\ref*{#1}}}
{\theorembodyfont{\upshape}
\newtheorem{examplecore}[theorem]{Example}}
\newrefformat{ex}{\hyperref[{#1}]{Example~\ref*{#1}}}

\newtheorem{remark}[theorem]{Remark}
\newrefformat{rem}{\hyperref[{#1}]{Remark~\ref*{#1}}}

\newcommand{\Spec}{\ensuremath{\operatorname{Spec}}}

\newcommand{\Mod}{\ensuremath{\operatorname{Mod}}}

\newcommand{\Hom}{\ensuremath{\operatorname{Hom}}}

\newcommand{\supp}{\ensuremath{\operatorname{supp}}}

\newcommand{\id}{\ensuremath{\operatorname{id}}}

\newcommand{\ann}{\ensuremath{\operatorname{Ann}}}

\newenvironment{proof}{\noindent\textsc{Proof:}}{\hspace*{\fill}
$\blacksquare$\par\vspace{.1cm}}

\newcommand{\mylabel}[1]{\label{#1}\ifthenelse{\boolean{finalversion}}{
  }{\marginpar{\tiny #1}}}  

\setboolean{finalversion}{true}
\begin{document}
\title{Basics of jet modules and algebraic linear differential operators}
\maketitle
\author{Stefan G\"unther}
\begin{abstract} In this paper, we collect the fundamental basic properties of jet-modules in algebraic geometry and related properties of differential operators. We claim no originality but we want to provide a reference work for own research and the research of other people.
\end{abstract}

\tableofcontents
\section{Notation and Conventions}
\begin{remark}\mylabel{rem:R1912} This is a slightly advanced introduction to the theory of jet modules in algebraic geometry. For the elementary facts see e.g. \cite{Vakil}.
\end{remark}
\begin{Convention} By $\mathbb N$\, we denote the natural numbers, by $\mathbb N_0$\, the set of nonnegative integers.
\end{Convention}
We use multi index notation: if $x_1,...,x_n$\, is a set of variables, we denote  
$$\underline{x}^{\underline{m}}: x_1^{m_1}\cdot x_2^{m_2}\ldots x_n^{m_n}\quad  \text{where}\quad  \underline{m}:=(m_1,m_2,\ldots ,m_n)$$ is a multiindex of lenght $n$. By $\mid \underline{m}\mid$\, we denote the number $m_1+...m_n$\,.
The partial derivatives of a function $f(x_1,...,x_m)$\,  in the variables $x_i$\, we denote by 
$\partial^{\mid m\mid}/\partial \underline{x}^{\underline{m}}(f(x_1,...,x_m))$\,.
\begin{Notation} Let $X\longrightarrow S$\, be a morphism of schemes. By $\Omega^{(1)}(X/S)$\, we denote the usual sheaf of K\"ahler differentials. We use this notation, because there exist higher K\"ahler differential modules $\Omega^{(n)}(X/S)$\, see \cite{Guenther}. The direct sum $\bigoplus_{n\in \mathbb N_0}\Omega^{(n)}(X/S)$\, is a graded sheaf of $\mathcal O_X$-algebras whose $\Spec_X$\, is the well known space of relative arcs (arcs in fibre direction). 
\end{Notation}
\begin{Notation} Let $k\longrightarrow A$\, be a homomorphism of commutative rings. By $I_{A/k}$\, we denote the ideal in the ring $A\otimes_kA$\, which is the kernel of the multiplication map $\mu: A\otimes_kA\longrightarrow A$\,. By $p_i: A\longrightarrow A\otimes_kA, i=1,2$\, we denote the maps $p_1: a\mapsto a\otimes 1$\, and $p_2: a\mapsto 1\otimes a$\, and likewise for the residue rings (jet algebras) $A\otimes_kA/I_{A/k}^{N+1}$\,.
\end{Notation}
\begin{Notation} Let $k\longrightarrow A$\, be a homomorphism of commutative rings and let $M$\, be an $A$-module. For each $N\in \mathbb N_0\cup\{\mathbb N\}$\, we denote the $N^{th}$ jet-module of $M$ relative to $k$\, by $\mathcal J^N(M/k)$\, which is by definition the module 
\begin{gather*}\mathcal J^N(M/k):=A\otimes_kM/I_{A/k}^{N+1}\cdot (A\otimes_kM)\quad \text{if} \quad N\in \mathbb N_0\quad \text{and}\\
 \mathcal J^{\mathbb N}(M/k):=\widehat{A\otimes_kM}^{I_{A/k}},
 \end{gather*} which is the completed module with respect to the diagonal ideal $I_{A/k}$\,.\\
 If $M=A$\,, this is a ring, which has two $A$-algebra structures in the obvious way. $\mathcal J^N(M/k)$\, is an $\mathcal J^N(A/k)$-module in a canonical way. \\
  The  derivation
$$M\longrightarrow \mathcal J^N(M/k),\quad m\mapsto \overline{1\otimes m}$$ is denoted by $d^N_{M/k}$\,.\\
Because of \prettyref{lem:L45} proven in section 2.1, it is called the universal  derivation for the $A$-module  $M$ relative to $k$\,.\\
If $M=A$\,, for $a\in A$\, we denote by $d^1a$\, the element $\overline{1\otimes a}-\overline{a\otimes 1}\in \mathcal J^N(A/k)$\,. Thus $d^N_{A/k}(a)=a +d^1(a).$\\
If $X\longrightarrow S$\, is a morphism of schemes an $\mathcal F$\, is a quasi coherent $\mathcal O_X$-module, we denote the $N^{th}$-jet module by $\mathcal J^N(\mathcal F/S)$\, with universal derivation
$$d^N_{\mathcal F/S}: \mathcal F\longrightarrow \mathcal J^N(\mathcal F/S).$$
Observe that in \cite{EGA}[EGAIV,chapter 16.3-16.4, pp.14-27], these are called the bundles of higher order principal parts.\\
 If $\mathcal F=\mathcal O_X$\, we denote 
 $$J^N(X/S):=\Spec_X\mathcal J^N(X/S)\stackrel{p_X}\longrightarrow X$$
 the associated affine bundle over $X$ which is some kind of higher order relative tangent bundle.
\end{Notation}
\section{Introduction}
The calculus of jet-modules and jet bundles in algebraic geometry is basic for understanding linear partial differential operators and for a given extension of commutative rings $k\longrightarrow A$\, and an $A$-module $M$, the $N^{th}$\, jet module $\mathcal J^N(M/k)$\, provides infinitesimal information about the $A$-module $M$. If $M=A$\,, and $A$\, is a local ring, essentially of finite type over a base field $k$,  the $N^{th}$ jet-algebra $\mathcal J^N(A/k)$\, provides further information about the singularity $(A,\mathfrak{m}, k).$\, This algebra is again a $k$-algebra, essentially of finite type over $k$, and, e.g., the Hilbert-Samuel polynom of this algebra gives higher order information about the singularity $(A,\mathfrak{m},k)$\,. So, studying jet modules can be very fruitful and in this paper we want to give basic elementary properties, e.g. generalizing properties of the classical module of K\"ahler differentials. We claim no originality but want to collect some basic facts that do not occur in the basic textbooks as \cite{EGA}[EGAIV,chapter 16.3-16.4, pp.14-27] in order to provide a reference for further own research and the research of other people. 
\section{Fundamentals of jet-modules and differential operators in algebraic geometry}
\subsection{Definition of a linear partial differential operator}
Recall that if $k$ is a field and $\mathbb A^n_k$\, is affine $n$-space over $k$, then a homogenous partial linear differential operator of order $N$, 
$$D: \bigoplus_{i=1}^m k[x_1,...,x_n]e_i\longrightarrow \bigoplus_{i=1}^mk[x_1,...,x_n]e_i$$ corresponds to a $k[x_1,...,x_n]$-linear map 
$$\widetilde{D}: (k[x_1,...,x_n][d^1x_1,...,d^1x_n]/(d^1x_1,...,d^1x_n)^{N+1})^{\oplus m}\longrightarrow k[x_1,...,x_n]^{\oplus m}$$ under the natural correspondence $\widetilde{D}\mapsto \widetilde{D}\circ (d^N_{\mathbb A^n_k/k})^{\oplus m}$\, where $d^N_{\mathbb A^N_k/k}$\, is the $N$-truncated Taylor series expansion,
$$k[x_1,...,x_n]\longrightarrow k[x_1,...,x_n][d^1x_1,...,d^1x_n]/(d^1x_1,...,d^1x_n)^{N+1},$$ sending $x_i$\, to $x_i+d^1x_i$\,.  This is a standard calculation. The $k$-algebra $k[\underline{x}][\underline{d^1x}]/(\underline{d^1x}^{N+1})$\, is the $N^{th}$ jet module $\mathcal J^N(k[\underline{x}]/k)$\, of $k[\underline{x}]/k$\, and is a $k[\underline{x}]$-algebra. The inverse limit 
\begin{gather*}\mathcal J^{\mathbb N}(k[\underline{x}]/k):=\projlim_{n\in\mathbb N}\mathcal J^N(k[\underline{x}]/k)=k[x_1,...,x_n][[d^1x_1,...,d^1x_n]]\\
\cong k[x_1,...,x_n]\widehat{\otimes}k[x_1,...,x_n]
\end{gather*} is the universal jetalgebra of $k[\underline{x}]$\, where the last expression is the tensor product completed with respect to the ideal $I_{k[\underline{x}]/k}$\, which is the kernel of the algebra multiplication map.
Over $k[\underline{x}]$ each projective module is free, so the standard definition from the $\mathcal C^{\infty}$-case carries over to the algebraic case. For $k=\mathbb R,\mathbb C$\, these are just the linear partial differential operators with polynomial coefficients. Via the above corresondence each partial differential operator corresponds to an $A$-linear map 
$$\widetilde{D}: \mathcal J^N(A^{\oplus m}/k)\longrightarrow A^{\oplus m}\,\, (A=k[\underline{x}]).$$ 
Since the formation of the jet modules commutes with localization, they give rise to a coherent sheaf on $\mathbb A^n$\, and we define the algebraic differential operators for a free $A=\Gamma(U,\mathcal O_{\mathbb A^n})$-module $A^{\oplus m}$ for $U\subset \mathbb A^n_k$\, a Zariski open subset  to be the $A$-linear homomorphisms $\Gamma(U, \mathcal J^N(k[\underline{x}]/k))^{\oplus m}\longrightarrow A^{\oplus m}$\,.\\
 In particular, if $U=\Spec k[\underline{x}]_{f}$\, for some polynomial $f$, this definition gives linear partial differential operators with rational function coefficients $\frac{g}{f^n}$\, .\\
\subsection{Supplements to the calculus of jet modules}
\begin{definition}\mylabel{def:D1712} Let $k\longrightarrow A$\, be a homomorphism of commutative rings, let $N\in \mathbb N_0\cup \{\mathbb N\}$, $M$\, be an $A$-module   and $Q$\, be a $\mathcal J^N(A/k)$-module. A filtered derivation $t_M: M\longrightarrow Q$\,  is an $A$-linear homomorphism where $Q$ is given the $A$-module structure via the second factor $A\longrightarrow A\otimes_kA/I_{A/k}^{N+1}$\,.\\  The notation filtered derivation, introduced in \cite{Guenther}, is used because for each $a\in A$, $m\in M$, $t(am)-am\in I_{A/k}\cdot Q$\,.
\end{definition}
\begin{lemma}\mylabel{lem:L45} Let $A\longrightarrow B$\, be a homomorphism of rings, $M$ be a $B$-module, and $Q$ be a $\mathcal J^N(B/A)$\,-module and $t: M\longrightarrow Q$\, be a $B$-linear map with respect to the second $B$-module structure on $\mathcal J^N(B/A)$, i.e., a filtered derivation. Then, there is a unique homomorphism of $\mathcal J^N(B/A)$\,-modules 
$$\phi:\mathcal J^N(M/A)\longrightarrow Q\quad \text{such that}\quad  t=\phi\circ d^N_{M/A}.$$
\end{lemma}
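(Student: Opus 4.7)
The plan is to construct $\phi$ by descending a natural $A$-bilinear map $B\times M\to Q$, $(b,m)\mapsto p_1(b)\cdot t(m)$, through the tensor product and then through the quotient by $I_{B/A}^{N+1}$. Concretely, set $\tilde\phi: B\otimes_A M\to Q$, $b\otimes m\mapsto p_1(b)\cdot t(m)$, where $p_1(b)=\overline{b\otimes 1}$ acts on $Q$ via the first $B$-module structure inherited from $\mathcal J^N(B/A)$.

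The first thing to verify is that $\tilde\phi$ is $A$-balanced; this reduces to the identity $p_1(a)=p_2(a)$ in $\mathcal J^N(B/A)$ for $a\in A$ (the tensor product is over $A$, so $a\otimes 1=1\otimes a$ already in $B\otimes_A B$), combined with the hypothesis that $t$ is $B$-linear for the $p_2$-structure. Next I would promote $\tilde\phi$ to a $B\otimes_A B$-linear map: for $b_1,b_2\in B$, the $B\otimes_A B$-action on $B\otimes_A M$ is $(b_1\otimes b_2)\cdot(b\otimes m)=b_1b\otimes b_2m$, and applying $\tilde\phi$ gives $p_1(b_1)p_1(b)p_2(b_2)\cdot t(m)=p_1(b_1)p_2(b_2)\cdot\tilde\phi(b\otimes m)$, using the commutativity of the ring $\mathcal J^N(B/A)$ together with the $p_2$-linearity of $t$.

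Since $Q$ is a $\mathcal J^N(B/A)=(B\otimes_A B)/I_{B/A}^{N+1}$-module, the ideal $I_{B/A}^{N+1}$ annihilates $Q$, so $\tilde\phi$ kills $I_{B/A}^{N+1}\cdot(B\otimes_A M)$ and descends to a $\mathcal J^N(B/A)$-linear map $\phi: \mathcal J^N(M/A)\to Q$. The factorization identity is immediate from the definition: $\phi(d^N_{M/A}(m))=\phi(\overline{1\otimes m})=p_1(1)\cdot t(m)=t(m)$. Uniqueness follows because $\mathcal J^N(M/A)$ is generated as a $\mathcal J^N(B/A)$-module by the image of $d^N_{M/A}$, since $\overline{b\otimes m}=\overline{b\otimes 1}\cdot\overline{1\otimes m}$, so any $\mathcal J^N(B/A)$-linear map is determined by its values on the $d^N_{M/A}(m)$.

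The case $N=\mathbb N$ is handled by passing to the inverse limit of the finite-order statements, noting that $Q$, as a $\mathcal J^{\mathbb N}(B/A)$-module, is in particular a module over each $\mathcal J^n(B/A)$, and that the universal maps are compatible. The main conceptual obstacle is purely bookkeeping: one must keep straight which of the two commuting $B$-module structures on $\mathcal J^N(B/A)$ and on $\mathcal J^N(M/A)$ is being used at each step. The symmetry $p_1|_A=p_2|_A$ provided by the tensor product being over $A$ is the pivot that makes $\tilde\phi$ well-defined in the first place.
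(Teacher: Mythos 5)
Your proof is correct and is essentially the paper's argument made explicit: the paper identifies $\mathcal J^N(M/A)$ with the extension of scalars $M\otimes_{B,p_2}\mathcal J^N(B/A)$ and invokes the adjunction between restriction and extension of scalars, which is precisely what your map $b\otimes m\mapsto p_1(b)\cdot t(m)$, its descent modulo $I_{B/A}^{N+1}$, and the generation argument for uniqueness unwind. The only slip is peripheral: a $\mathcal J^{\mathbb N}(B/A)$-module is not in general a module over the truncations $\mathcal J^n(B/A)$ (scalar restriction goes the other way), but the paper's own proof does not treat the case $N=\mathbb N$ separately either.
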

\begin{proof} We have $\mathcal J^N(B/A)=B\otimes_AB/\mathcal I_{B/A}^{N+1}$\, and natural homomorphisms $p_1,p_2: B\longrightarrow \mathcal J^N(B/A)$\,. Then, by definition 
$$\mathcal J^N(M/A)=M\otimes_{B,p_2}\mathcal J^N(B/A).$$ The statement reduces to the easy fact, that, given a homomorphism of rings $k\longrightarrow l,$ given  a $k$-module $M_k$ and  an $l$-module $M_l$, and a $k$-linear homomorphism $M_k\longrightarrow M_l$\,, there is a unique $l$-linear homomorphism $M_k\otimes_{k}l\longrightarrow M_l$\, which follows by the adjunction of restriction and extension of scalars.
\end{proof}
\subsection{Fundamental properties of the jet-modules}
Recall e.g. from \cite{Guenther}, that if $k\longrightarrow A$\, is a $k$-algebra with presentation 
$$A\cong k[x_i| i\in I]/(f_j| j\in J),$$ then the $N^{th}$ jet algebra $\mathcal J^N(A/k)$
possesses the presentation 
$$\mathcal J^N(A/k):= k[x_i, d^1x_i|i\in I]/ (I_{A/k}^{N+1}+(f_j, d^1f_j| j\in J)).$$
This can also be taken as the definition of the $N^{th}$-jet-algebra. One then has to show that this is independend of the choosen presentation for $A$.
We have the following easy consequences.
\begin{lemma}\mylabel{lem:L19125}(Base change 0) Let $k\longrightarrow A$\, and $k\longrightarrow A'$\, be homomorphisms of commutative rings. Then for each $N\in \mathbb N_0\cup\{\mathbb N\}$\,, there is an isomorphism
$$\mathcal J^N(A\otimes_kA'/A')\stackrel{\cong}\longrightarrow \mathcal J^N(A/k)\otimes_AA'.$$
\end{lemma}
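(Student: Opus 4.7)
The plan is to unwind the definition of the jet algebra on both sides and verify the isomorphism by a direct identification of tensor products and ideals, for $N$ finite; the case $N=\mathbb N$ will then follow by passing to inverse limits (equivalently, by completing with respect to the diagonal ideal). Write $B := A\otimes_k A'$ and $R := A\otimes_k A$; by definition $\mathcal J^N(B/A') = (B\otimes_{A'} B)/I_{B/A'}^{N+1}$, so the task is to simplify the right hand side and match it with the claimed answer.

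First I would rearrange $(A\otimes_k A')\otimes_{A'}(A\otimes_k A') \cong R\otimes_k A'$ using the elementary identity $A'\otimes_{A'} A' = A'$; under this identification the multiplication $\mu_{B/A'}$ corresponds to $\mu_{A/k}\otimes \mathrm{id}_{A'}$. Next, right exactness of the functor $-\otimes_k A'$ identifies the kernel $I_{B/A'}$ with the extended ideal $I_{A/k}\cdot(R\otimes_k A')$. Finally, the purely algebraic identity $(JS')^{N+1} = J^{N+1}\cdot S'$ (valid for any ideal $J\subset S$ and any ring map $S\to S'$, without any flatness hypothesis) yields $I_{B/A'}^{N+1} = I_{A/k}^{N+1}\cdot(R\otimes_k A')$, so that
$$\mathcal J^N(B/A') \;=\; (R/I_{A/k}^{N+1})\otimes_k A' \;=\; \mathcal J^N(A/k)\otimes_k A',$$
which agrees with $\mathcal J^N(A/k)\otimes_A B$ via the natural $A$-algebra map $a\mapsto a\otimes 1$ from $A$ to $B$, matching the statement of the lemma.

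The step needing the most care is the third one, where one must verify $I_{B/A'}^{N+1} = I_{A/k}^{N+1}\cdot(R\otimes_k A')$ rather than merely the containment $\supseteq$; this is the point on which I would focus, and it follows immediately by expanding both sides in terms of generators. As a sanity check, one could alternatively invoke the presentation-based description of the jet algebra recalled just above the lemma: if $A = k[x_i\mid i\in I]/(f_j\mid j\in J)$ then $B = A'[x_i\mid i\in I]/(f_j\mid j\in J)$ as an $A'$-algebra, and substituting into the explicit formula for $\mathcal J^N$ produces the same ring on the nose. For $N=\mathbb N$ the isomorphism follows by completing both sides with respect to the diagonal ideals and using the finite-$N$ isomorphisms just established.
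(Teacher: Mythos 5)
Your argument is correct, but it takes a different route from the paper. The paper invokes the presentation-based description of the jet algebra: choosing $A=k[x_i\mid i\in I]/(f_j\mid j\in J)$, it observes that $A\otimes_kA'=A'[x_i]/(f_j)$ is a presentation over $A'$, reads off the isomorphism from the explicit formula for $\mathcal J^N$, and then upgrades it to a canonical map by exhibiting $d^N_{A/k}\otimes_k\id_{A'}$ as a filtered derivation and appealing to the universal property (\prettyref{lem:L45}); what you offer only as a ``sanity check'' is in fact the paper's main argument. Your primary route is presentation-free: you compute directly from the definition, identifying $B\otimes_{A'}B\cong (A\otimes_kA)\otimes_kA'$, using right exactness of $-\otimes_kA'$ to see that $I_{B/A'}$ is the extended ideal $I_{A/k}\cdot\bigl((A\otimes_kA)\otimes_kA'\bigr)$, and using the (flatness-free) identity that powers of an extended ideal are the extensions of the powers. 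This buys you two things: no dependence on the well-definedness of the presentation formula, and canonicity for free, since every identification you use is natural, whereas the paper has to add a separate argument for canonicity. One point you gloss over, though harmlessly: the right-hand side $\mathcal J^N(A/k)\otimes_AA'$ in the statement is not literally meaningful (there is no ring map $A\to A'$); your reading of it as $\mathcal J^N(A/k)\otimes_A(A\otimes_kA')\cong\mathcal J^N(A/k)\otimes_kA'$ is the only sensible one and coincides with what the paper's own proof actually produces. The reduction of the case $N=\mathbb N$ to finite $N$ by passing to limits/completions matches the paper's handling as well.
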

\begin{proof} This follows from the fact, that if $A=k[x_i|i\in I]/(f_j|j\in J)$\, is a presentation of $A/k$, then 
$$A\otimes_kA'=A'[x_i|i\in I]/(f_j|j\in J)$$ is a presentation for $A\otimes_kA'/A'$\, and the corresponding presentation of the jet-algebras.\\
This isomorphism can be made canonical by observing that 
$$A\otimes_kA'\stackrel{d^N_{A/k}\otimes_k\id_{A'}}\longrightarrow \mathcal J^N(A/k)\otimes_kA'\longrightarrow \mathcal J^N(A/k)\otimes_AA'$$
is a filtered module derivation which by the universal property of the jet-modules induces a canonical
isomorphism
$\mathcal J^N(A\otimes_kA'/A')\cong \mathcal J^N(A/k)\otimes_AA'.$
\end{proof}
\begin{corollary}\mylabel{cor:C19126} If $A$ is a finitely generated $k$-algebra, then for each $N\in \mathbb N_0$\,, $\mathcal J^N(A/k)$\, is a finitely generated $A$-algebra.
\end{corollary}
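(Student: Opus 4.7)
The plan is to simply apply the explicit presentation of $\mathcal{J}^N(A/k)$ recalled immediately before the statement. Since $A$ is a finitely generated $k$-algebra, I first fix a presentation $A \cong k[x_1, \ldots, x_n]/(f_j \mid j \in J)$ with only finitely many variables $x_1, \ldots, x_n$ (the family of relations $J$ may well be infinite, but this is harmless for the finite generation as an $A$-algebra). Substituting into the quoted formula yields
$$\mathcal{J}^N(A/k) \cong k[x_1,\ldots,x_n,\, d^1 x_1,\ldots,d^1 x_n] \big/ \bigl(I_{A/k}^{N+1} + (f_j, d^1 f_j \mid j \in J)\bigr).$$
Passing to the quotient $A$ in the $x_i$-variables, I read this as a surjection of $A$-algebras from the polynomial ring $A[d^1 x_1, \ldots, d^1 x_n]$ onto $\mathcal{J}^N(A/k)$. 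Hence $\mathcal{J}^N(A/k)$ is generated as an $A$-algebra by the $n$ elements $d^1 x_1, \ldots, d^1 x_n$, which proves the claim.

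There is no real obstacle; the whole corollary is essentially packaged into the presentation formula. The one point that deserves emphasis is the role of the finiteness of $N$: for $N \in \mathbb{N}_0$ the relations $I_{A/k}^{N+1} = 0$ force every product of $N+1$ of the $d^1 x_i$ to vanish, which is precisely what prevents the auxiliary variables from contributing an infinite-dimensional polynomial piece. For $N = \mathbb{N}$ these nilpotency-like relations disappear and one instead obtains the completed tensor product, which is typically not finitely generated over $A$ (for instance $A = k[t]$ gives $\mathcal{J}^{\mathbb{N}}(A/k) \cong k[t][[d^1 t]]$), consistent with the statement restricting $N$ to $\mathbb{N}_0$.
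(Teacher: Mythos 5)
Your proof is correct and is essentially the paper's own argument: the paper leaves the proof empty precisely because the corollary is read off directly from the presentation recalled just before it, namely that $\mathcal J^N(A/k)$ is a quotient of the polynomial ring $A[d^1x_1,\ldots,d^1x_n]$ over a finite set of variables, hence generated as an $A$-algebra by $d^1x_1,\ldots,d^1x_n$. One small quibble with your closing remark: the truncation by $I_{A/k}^{N+1}$ is not what secures finite generation --- any quotient of $A[d^1x_1,\ldots,d^1x_n]$ is finitely generated as an $A$-algebra regardless of those relations --- the real point for $N=\mathbb N$ is that the completed tensor product is no longer such a quotient, as your own example $k[t][[d^1t]]$ illustrates.
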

\begin{proof}
\end{proof} 
\begin{proposition}\mylabel{prop:P20124} Let $k\longrightarrow A$\, be a homomorphism of noetherian rings that makes $A$ a smooth, finite type $k$-algebra.  Let $M$\, be a finitely generated projective $A$-module. Then for each $N\in \mathbb N_0$\,, $\mathcal J^N(M/k)$\, is a projective $A$-module.
\end{proposition}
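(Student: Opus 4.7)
The plan is to reduce to the case $M = A$ and then induct on $N$, using the filtration of $\mathcal J^N(A/k)$ by powers of the diagonal ideal $I_{A/k}$, with the smoothness hypothesis entering through regularity of the diagonal embedding.

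For the reduction, observe that the functor $\mathcal J^N(-/k) = (A\otimes_k -)/I_{A/k}^{N+1}(A\otimes_k -)$ commutes with finite direct sums. Since $M$ is finitely generated projective, pick $M'$ with $M \oplus M' \cong A^{\oplus r}$; then $\mathcal J^N(M/k)$ is a direct summand of $\mathcal J^N(A/k)^{\oplus r}$. Hence it suffices to prove that $\mathcal J^N(A/k)$ is projective as an $A$-module (with its second-factor structure, as in \prettyref{lem:L45}).

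For $M = A$, and for each $N \geq 1$, one has the short exact sequence of $A$-modules
\begin{equation*}
0 \longrightarrow I_{A/k}^N/I_{A/k}^{N+1} \longrightarrow \mathcal J^N(A/k) \longrightarrow \mathcal J^{N-1}(A/k) \longrightarrow 0
\end{equation*}
arising from the natural surjection $A\otimes_k A/I_{A/k}^{N+1} \to A\otimes_k A/I_{A/k}^N$. The key geometric input is that, since $A$ is smooth of finite type over $k$ and both rings are noetherian, the diagonal $\Spec A \hookrightarrow \Spec(A\otimes_k A)$ is a regular closed immersion. From this one obtains both that $I_{A/k}/I_{A/k}^2 \cong \Omega^{(1)}(A/k)$ is finitely generated projective and that the canonical map of graded $A$-algebras
\begin{equation*}
\Sym^\bullet_A\bigl(\Omega^{(1)}(A/k)\bigr) \longrightarrow \bigoplus_{j \geq 0} I_{A/k}^j/I_{A/k}^{j+1}
\end{equation*}
is an isomorphism (this is classical, see EGA IV, 16.9, or Matsumura's \emph{Commutative Ring Theory}). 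Since symmetric powers of finitely generated projective modules are Zariski-locally free of finite rank and hence projective, each graded piece $I_{A/k}^N/I_{A/k}^{N+1} \cong \Sym^N_A(\Omega^{(1)}(A/k))$ is projective over $A$.

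Now induct on $N$. The base $\mathcal J^0(A/k) = A$ is trivially projective. In the step, the quotient $\mathcal J^{N-1}(A/k)$ in the displayed exact sequence is projective by induction, so the sequence splits and gives
\begin{equation*}
\mathcal J^N(A/k) \cong \mathcal J^{N-1}(A/k) \oplus \Sym^N_A\bigl(\Omega^{(1)}(A/k)\bigr),
\end{equation*}
a direct sum of projective $A$-modules; iterating yields a (non-canonical) decomposition $\mathcal J^N(A/k)\cong \bigoplus_{j=0}^N \Sym^j_A(\Omega^{(1)}(A/k))$. The main obstacle is the identification of the graded pieces with symmetric powers: that is precisely the regular-immersion input, and is where the smoothness hypothesis is actually used.
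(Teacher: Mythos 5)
Your proof is correct, and its core is exactly the paper's argument: induction on $N$ using the exact sequence $0\to I_{A/k}^N/I_{A/k}^{N+1}\to \mathcal J^N(A/k)\to \mathcal J^{N-1}(A/k)\to 0$, with smoothness entering through the regularity of the diagonal immersion and the resulting identification $I_{A/k}^N/I_{A/k}^{N+1}\cong \Sym^N_A(\Omega^{(1)}(A/k))$, which is projective; whether one then says ``the sequence splits'' or ``an extension of projectives is projective'' is immaterial. Where you genuinely diverge is the reduction from a general finitely generated projective $M$ to $M=A$: the paper first treats free $M$ by additivity of $\mathcal J^N(-/k)$ and then passes to projective $M$ by choosing a Zariski open cover of $\Spec A$ trivializing $M$ and invoking the localization invariance of jet modules (\prettyref{cor:C19127}), concluding that a module which is Zariski-locally projective is projective; you instead write $M\oplus M'\cong A^{\oplus r}$ and exhibit $\mathcal J^N(M/k)$ as a direct summand of $\mathcal J^N(A/k)^{\oplus r}$. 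Your reduction is the more economical one: it avoids the forward reference to \prettyref{cor:C19127}, which in the paper is itself deduced from the \'etale invariance lemma (\prettyref{lem:L30111}) proved only later, and it needs no local-to-global argument for projectivity. One small clarification worth making explicit: you state projectivity for the second-factor $A$-structure, while the paper leaves the structure unspecified (and later uses both flavors); your argument in fact yields projectivity for either $A$-module structure, since the graded pieces $I_{A/k}^N/I_{A/k}^{N+1}$ are annihilated by $I_{A/k}$ (so their two structures coincide) and the induction and the direct-summand reduction are carried out by exact sequences and isomorphisms of $\mathcal J^N(A/k)$-modules, which restrict along either of $p_1,p_2$.
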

\begin{proof} First, consider the case $M=A$\,. We prove the result by induction on $N\in \mathbb N_0$\,. For $N=0$, $\mathcal J^0(A/k)\cong A$\, the result is trivially true.  From the jet-bundle exact sequence  $j^N_{A/k}$,
$$0\longrightarrow I_{A/k}^N/I_{A/k}^{N+1}\longrightarrow \mathcal J^N(A/k)\longrightarrow \mathcal J^{N-1}(A/k)\longrightarrow 0,$$
suppose we know that $\mathcal J^{N-1}(A/k)$\, is a projective $A$-module. Since $A$ is a smooth $k$-algebra, the diagonal ideal $I_{A/k}$\, corresponds to the regular embedding
$$X=\Spec A\hookrightarrow X\times_{\Spec k}X.$$
In this case, 
$$I_{A/k}^N/I_{A/k}^{N+1}\cong (I_{A/k}/I_{A/k}^2)^{\otimes^s N}\cong \Omega^{(1)}(A/k)^{\otimes^s N}$$ which is well known to be a projective $A$-module. The induction step is then complete by observing that an extension of projective $A$-modules is again a projective $A$-module.\\
For the general case, if $M$ is a free $A$-module, the claim is true since taking jet-modules commute with taking direct sums. \\
For arbitrary projective $M$, since by \prettyref{cor:C19127} taking jet modules commute with Zariski localizations, choosing a Zariski-open covering $\Spec A=\bigcup_{i\in I}\Spec A_i$\, such that $M$ is free on $\Spec A_i$\,, we know that $\mathcal J^N(M/k)$\, is Zariski-locally a projective $A$-module, hence a projective $A$-module.
\end{proof} 
We now prove some fundamental properties.  
\begin{proposition}\mylabel{prop:POkt2916}(Exterior products I) Let $k\longrightarrow A$\, and $k\longrightarrow B$\, be homomorphisms of commutative rings. Then, there is a canonical isomorphism
$$\gamma_{A,B}: \mathcal J^{\mathbb N}(A/k)\otimes_k\mathcal J^{\mathbb N}(B/k)\cong \mathcal J^{\mathbb N}(A\otimes_kB/k).$$
\end{proposition}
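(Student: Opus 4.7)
The plan is to exhibit $\gamma_{A,B}$ as a pair of mutually inverse maps coming naturally from the universal properties of the objects involved, based on the ring-theoretic identification
$$R \;:=\; (A \otimes_k B) \otimes_k (A \otimes_k B) \;\cong\; (A \otimes_k A) \otimes_k (B \otimes_k B)$$
obtained by swapping the middle tensor factors. Under this isomorphism, the diagonal ideal $I_{(A \otimes_k B)/k}$ on the left corresponds exactly to the sum $I_A + I_B$ on the right, where $I_A$ and $I_B$ denote the extensions to $R$ of $I_{A/k}$ and $I_{B/k}$ respectively: writing $\alpha := a \otimes 1 - 1 \otimes a$ and $\beta := b \otimes 1 - 1 \otimes b$, a generator $(a \otimes b) \otimes 1 - 1 \otimes (a \otimes b)$ of the diagonal ideal expands as $(1 \otimes a)\beta + \alpha(1 \otimes b) + \alpha\beta$, while conversely each $\alpha$ and each $\beta$ manifestly lifts to a generator of $I_{(A \otimes_k B)/k}$.

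Having this identification of ideals, I would construct $\gamma_{A,B}$ in the forward direction using functoriality applied to the two structure inclusions $A \hookrightarrow A \otimes_k B$ and $B \hookrightarrow A \otimes_k B$. These induce $k$-algebra homomorphisms on jet algebras, and since the target $\mathcal J^{\mathbb N}(A \otimes_k B/k)$ is commutative the two maps combine into a single $k$-algebra homomorphism out of $\mathcal J^{\mathbb N}(A/k) \otimes_k \mathcal J^{\mathbb N}(B/k)$. Conversely, the map
$$A \otimes_k B \longrightarrow \mathcal J^{\mathbb N}(A/k) \otimes_k \mathcal J^{\mathbb N}(B/k), \qquad a \otimes b \mapsto d^{\mathbb N}_{A/k}(a) \otimes d^{\mathbb N}_{B/k}(b),$$
is easily checked to be a filtered derivation with respect to the $p_2 \otimes p_2$ module structure on the target, so \prettyref{lem:L45} (applied to each finite truncation and passed to the inverse limit) produces the inverse map. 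Mutual inverseness is then a direct computation on the generating elements $d^{\mathbb N}(a)$, $d^{\mathbb N}(b)$ and $d^{\mathbb N}(a \otimes b)$.

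The step demanding the most care is the passage through the $I$-adic completion: strictly speaking, the ordinary tensor product $\mathcal J^{\mathbb N}(A/k) \otimes_k \mathcal J^{\mathbb N}(B/k)$ of two completions is not itself complete, so the isomorphism must be read with the completed tensor product (equivalently, only after completing both sides at the appropriate diagonal ideal). The cofinality estimate
$$(I_A + I_B)^{2N+1} \;\subseteq\; I_A^{N+1} + I_B^{N+1} \;\subseteq\; (I_A + I_B)^{N+1}$$
on $R$, which follows from the binomial expansion (any term of total degree $2N+1$ has one exponent at least $N+1$), shows that the two natural filtrations — the $I_{(A \otimes_k B)/k}$-adic one defining the right-hand side and the product filtration defining the completed tensor product on the left — are mutually cofinal. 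Hence the two completions coincide, and the two maps above become mutually inverse topological isomorphisms of $k$-algebras.
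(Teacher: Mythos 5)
Your proof is correct, and its computational core is the same as the paper's: both arguments rest on the identity $I_{(A\otimes_kB)/k}=I_A+I_B$ (read off from the generators $1\otimes c-c\otimes 1$) together with the two cofinality inclusions $(I_A+I_B)^{2N+1}\subseteq I_A^{N+1}+I_B^{N+1}\subseteq (I_A+I_B)^{N+1}$, followed by a passage to the limit. Where you differ is in the packaging: the paper chooses presentations $A=k[x_i]/(f_j)$, $B=k[y_l]/(g_m)$ and compares the resulting explicit presentations of the jet algebras, whereas you work presentation-free via the middle-swap isomorphism $(A\otimes_kB)\otimes_k(A\otimes_kB)\cong(A\otimes_kA)\otimes_k(B\otimes_kB)$, and you build the two directions of $\gamma_{A,B}$ explicitly (functoriality of $\mathcal J^{\mathbb N}$ for the structure maps $A,B\to A\otimes_kB$ on one side, the filtered derivation $a\otimes b\mapsto d^{\mathbb N}_{A/k}(a)\otimes d^{\mathbb N}_{B/k}(b)$ and \prettyref{lem:L45} on the other), rather than deducing an isomorphism from the limit of the $\gamma^N_{A,B}$; note only that when you invoke \prettyref{lem:L45} at a finite level the degree shift forces the target to be viewed as a module over a higher truncation, which is exactly what your cofinality estimate repairs, and that $A\to A\otimes_kB$ need not be injective (only the homomorphism is used). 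Your closing caveat is also a genuine improvement in precision: the limit of the maps $\gamma^N_{A,B}$ has source $\varprojlim_N\bigl(\mathcal J^N(A/k)\otimes_k\mathcal J^N(B/k)\bigr)$, i.e.\ the completed tensor product $\mathcal J^{\mathbb N}(A/k)\widehat{\otimes}_k\mathcal J^{\mathbb N}(B/k)$, so the statement as written with the plain $\otimes_k$ should indeed be read in the completed sense (consistent with the formula $\mathcal J^{\mathbb N}(k[\underline{x}]/k)\cong k[\underline{x}]\widehat{\otimes}k[\underline{x}]$ in the introduction), a point the paper's proof passes over silently.
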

\begin{proof} This follows from the explicite presentations of the jet modules for a given presentation of $A$ and $B$, respectively (see \cite{Guenther}[chapter 6.5, pp. 101-119]).\\
Choose $N\in \mathbb N_0$\,.
 If 
 $$A=k[x_i| i\in I]/(f_j| j\in J)\quad  \text{and}\quad  B=k[y_k| k\in K]/(g_l| l\in L)$$
 are presentations for $A$ and $B$, then 
$$A\otimes_kB= k[x_i, y_k\mid i\in I, k\in K]/(f_j, g_l\mid j\in J, l\in L)$$ is a presentation for $A\otimes_kB$\, 
and 
\begin{gather*}
\mathcal J^N(k[x_i\mid i\in I]/(f_j\mid j\in J)/k)\otimes_k\mathcal J^N(k[y_k\mid k\in K]/(g_l\mid l\in L)/k)
\cong \\
k[x_i, d^1x_i| i\in I]/((f_j, d^1f_j\mid j\in J)+ I_{A/k}^{N+1})\\
\otimes_k k[y_k, d^1y_k| k\in K]/((g_l, d^1g_l\mid l\in L)+I_{B/k}^{N+1})\quad \text{and} \\
\mathcal J^N(A\otimes_kB/k)\cong \\
k[x_i, y_k, d^1x_i, d^1y_k| i\in I, k\in K]/((f_j, d^1f_j, g_l, d^1g_l| k\in K, l\in L)+I_{A\otimes_kB/k}^{N+1})
\end{gather*}
We have the identity $I_{A\otimes_kB/k}=(I_{A/k}+I_{B/k})$\,  which follows from the well known fact that the diagonal ideal is generated by all $1\otimes a-a\otimes 1=d^1a$\,. Thus $I_{A/k}$\, is generated by all $1\otimes x_i-x_i\otimes 1$, $I_{B/k}$\, is generated by all $1\otimes y_j-y_j\otimes 1$.
Obviously, we have an inclusion 
$$(I_{A/k}^{N+1}+I_{B/k}^{N+1})\cdot (A\otimes_kB)\subseteq I_{A\otimes_kB}^{N+1}.$$
Conversely, there is an inlcusion 
$$I_{A\otimes_kB/k}^{2N+1}\subseteq (I_{A/k}^{N+1}+I_{B/k}^{N+1})\cdot (A\otimes_kB)$$ because $2N+1$-fold products of elements $x_i\otimes 1-1\otimes x_i$\, and $y_j\otimes 1-1\otimes y_j$\, must either contain an $N+1$ fold product of the $x_i\otimes 1-1\otimes x_i$\, or an $N+1$ fold product of the $y_j\otimes 1-1\otimes y_j$\,.\\
It follows, taking the projective limit over the natural homomorphisms
$$\gamma_{A,B}^N:\mathcal J^N(A/k)\otimes_k\mathcal J^N(B/k)\longrightarrow \mathcal J^N(A\otimes_kB/k),$$
we get an isomorphism
$$\gamma_{A,B}:\mathcal J^{\mathbb N}(A/k)\otimes_k\mathcal J^{\mathbb N}(B/k)\stackrel{\cong}\longrightarrow\mathcal J^{\mathbb N}(A\otimes_kB/k).$$
\end{proof}
\begin{lemma}\mylabel{lem:L19121} Let $k\longrightarrow A$\, be a homomorphism of commutative rings. Then the functor 
$$\mathcal J^N(-/k): (A-\Mod)\longrightarrow (\mathcal J^N(A/k)-\Mod),\quad M\mapsto \mathcal J^N(M/k)$$
is right exact.
\end{lemma}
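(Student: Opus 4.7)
The plan is to identify $\mathcal J^N(-/k)$ with a tensor product functor so that right exactness reduces to the standard right exactness of tensor products.

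Starting from the definition $\mathcal J^N(M/k)=(A\otimes_k M)/I_{A/k}^{N+1}(A\otimes_k M)$, I would first establish the natural isomorphism
\[
\mathcal J^N(M/k)\;\cong\;\mathcal J^N(A/k)\otimes_{A,p_2}M
\]
of $\mathcal J^N(A/k)$-modules, where the tensor product uses the $A$-module structure on $\mathcal J^N(A/k)$ induced by $p_2$. The assignment $(a\otimes b)\otimes m\mapsto a\otimes bm$ gives an isomorphism $(A\otimes_k A)\otimes_{A,p_2}M\to A\otimes_k M$ with inverse $a\otimes m\mapsto(a\otimes 1)\otimes m$; under this identification the submodule $I_{A/k}^{N+1}\cdot\bigl((A\otimes_k A)\otimes_{A,p_2}M\bigr)$ matches $I_{A/k}^{N+1}\cdot(A\otimes_k M)$, and passing to the quotient yields the claim. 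This is essentially the identification already used in the proof of \prettyref{lem:L45}.

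With this isomorphism in hand, the lemma is immediate: the functor $\mathcal J^N(A/k)\otimes_{A,p_2}-$ from $A$-modules to $\mathcal J^N(A/k)$-modules is right exact as a standard fact about tensor products, and naturality in $M$ transfers this conclusion to $\mathcal J^N(-/k)$. Alternatively, and perhaps more elementarily, one may decompose $\mathcal J^N(-/k)$ as the composition of $A\otimes_k-$ (right exact as a functor on $A$-modules, since it is even exact on the underlying $k$-modules) followed by the right exact quotient $N\mapsto N/I_{A/k}^{N+1}N$; a composition of right exact functors is right exact.

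The only step that is not purely formal is the check that the isomorphism above is natural in $M$ and compatible with the quotient by $I_{A/k}^{N+1}$; this is the main (but routine) technical obstacle. Everything else is then a direct application of the right exactness of $-\otimes_A-$ together with the universal property of the quotient.
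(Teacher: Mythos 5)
Your proposal is correct and follows essentially the same route as the paper: it identifies $\mathcal J^N(M/k)$ with $M\otimes_{A,p_2}\mathcal J^N(A/k)$ (the same functorial isomorphism invoked in the paper's proof and already implicit in \prettyref{lem:L45}) and then cites right exactness of the tensor product. The additional verification of the isomorphism and the alternative factorization through $A\otimes_k-$ followed by the quotient are fine but add nothing beyond the paper's argument.
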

\begin{proof} This follows from the functorial isomorphism $\mathcal J^N(M/k)\cong M\otimes_{A,p_2}\mathcal J^N(A/k)$ and the right exactness of the tensor product.
\end{proof}
\begin{corollary}\mylabel{cor:C19123} With the previous notation, if $$A^{\oplus J}\stackrel{\phi}\longrightarrow A^{\oplus I}\twoheadrightarrow M$$ is a presentation for the $A$-module $M$, with $\phi$\, given by the matrix $(a_{ij})$, then $\mathcal J^N(M/k)$
is given by the presentation
$$\mathcal J^N(A/k)^{\oplus J}\stackrel{\mathcal J^N(\phi/k)}\longrightarrow \mathcal J^N(A/k)^{\oplus I}\twoheadrightarrow \mathcal J^N(M/k),$$
where $\mathcal J^N(\phi/k)$\, is given by the matrix $(a_{ij}+d^1a_{ij})$\,.
\end{corollary}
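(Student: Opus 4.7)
The plan is to obtain the presentation by transporting the given presentation of $M$ through the functor $\mathcal J^N(-/k)$ and then identifying, entry by entry, what the induced map looks like as a matrix over $\mathcal J^N(A/k)$.

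First, I would apply \prettyref{lem:L19121}, which tells us that $\mathcal J^N(-/k)$ is right exact, to the given presentation $A^{\oplus J} \xrightarrow{\phi} A^{\oplus I} \twoheadrightarrow M$. This immediately yields a right-exact sequence
\begin{equation*}
\mathcal J^N(A^{\oplus J}/k) \xrightarrow{\mathcal J^N(\phi/k)} \mathcal J^N(A^{\oplus I}/k) \twoheadrightarrow \mathcal J^N(M/k).
\end{equation*}
Using the isomorphism $\mathcal J^N(M/k) \cong M \otimes_{A,p_2} \mathcal J^N(A/k)$ (the same identification used in the proof of \prettyref{lem:L19121}), one has $\mathcal J^N(A^{\oplus n}/k) \cong \mathcal J^N(A/k)^{\oplus n}$ canonically, because tensoring commutes with direct sums. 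This gives the presentation of the form asserted in the statement; it remains only to compute the matrix.

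The second step is the identification of the matrix of $\mathcal J^N(\phi/k)$. Under the identification $\mathcal J^N(A^{\oplus n}/k) = A^{\oplus n} \otimes_{A,p_2} \mathcal J^N(A/k)$, the standard generator $e_j$ maps to $e_j \otimes 1$, and the induced map $\phi \otimes_{A,p_2} \id$ sends $e_j \otimes 1$ to $\sum_i (a_{ij} e_i) \otimes 1 = \sum_i e_i \otimes p_2(a_{ij})$. Thus the matrix entries are $p_2(a_{ij}) \in \mathcal J^N(A/k)$. By the formula for the universal derivation recalled in the notation, $p_2(a) = \overline{1 \otimes a} = d^N_{A/k}(a) = a + d^1 a$. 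Substituting $a = a_{ij}$ gives the claimed matrix $(a_{ij} + d^1 a_{ij})$.

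There is no real obstacle here; the argument is essentially bookkeeping. The only point worth being careful about is which of the two $A$-module structures on $\mathcal J^N(A/k)$ is being used when writing the matrix, since $p_1$ and $p_2$ give different expressions for the image of $a \in A$. The correct choice is dictated by the tensor product formula $\mathcal J^N(M/k) = M \otimes_{A,p_2} \mathcal J^N(A/k)$ from \prettyref{lem:L45}, which forces the use of $p_2(a) = a + d^1 a$ and thereby produces the matrix $(a_{ij} + d^1 a_{ij})$ rather than $(a_{ij})$.
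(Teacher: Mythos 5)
Your proposal is correct and follows essentially the same route as the paper, which simply invokes right exactness of $\mathcal J^N(-/k)$ and its commutation with direct sums; you have just written out the details, including the entry-wise identification $p_2(a_{ij})=\overline{1\otimes a_{ij}}=a_{ij}+d^1a_{ij}$ that the paper leaves implicit. In particular, your care about using the second ($p_2$) module structure from $\mathcal J^N(M/k)\cong M\otimes_{A,p_2}\mathcal J^N(A/k)$ is exactly the point that makes the matrix come out as $(a_{ij}+d^1a_{ij})$.
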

\begin{proof} This follows from the easy fact, that the functor $\mathcal J^N(-/k)$\, commutes with direct sums.
\end{proof} 
\begin{proposition}\mylabel{prop:POkt298} (Exterior Products II) Let $k\longrightarrow A$\, and $k\longrightarrow B$\, be homomorphisms of commutative rings. Let $M$ be an $A$-module and $N$ be a $B$-module.
Then, there is a canonical isomorphism 
$$\gamma_{M,N}:\mathcal J^{\mathbb N}(M/k)\otimes_k\mathcal J^{\mathbb N}(N/k)\cong \mathcal J^{\mathbb N}(M\otimes_kN/k).$$
\end{proposition}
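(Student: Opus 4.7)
The plan is to construct $\gamma_{M,N}$ by combining the identification of jet modules as tensor products from the proof of \prettyref{lem:L19121} with the algebra-level isomorphism of Exterior Products I (\prettyref{prop:POkt2916}), and then to verify that it is an isomorphism by reducing to the case of free modules.

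To construct the candidate map at finite level $N\in\mathbb N_0$, I use $\mathcal J^N(M/k)\cong M\otimes_{A,p_2}\mathcal J^N(A/k)$ (and likewise for $N$ over $B$) together with standard associativity of tensor products to write
$$\mathcal J^N(M/k)\otimes_k\mathcal J^N(N/k)\cong (M\otimes_kN)\otimes_{A\otimes_kB}\bigl(\mathcal J^N(A/k)\otimes_k\mathcal J^N(B/k)\bigr).$$
Precomposing with the finite-level map $\gamma^N_{A,B}$ from the proof of \prettyref{prop:POkt2916} in the right-hand factor yields a canonical morphism into
$$(M\otimes_kN)\otimes_{A\otimes_kB}\mathcal J^N(A\otimes_kB/k)=\mathcal J^N(M\otimes_kN/k).$$
Taking the projective limit in $N$ produces $\gamma_{M,N}$. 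Equivalently, one can build $\gamma_{M,N}^{-1}$ directly from the universal property: the $k$-bilinear pairing $(m,n)\mapsto d^{\mathbb N}_{M/k}(m)\otimes d^{\mathbb N}_{N/k}(n)$ is a filtered derivation in the sense of \prettyref{def:D1712} relative to $A\otimes_kB$ (after identifying $\mathcal J^{\mathbb N}(A/k)\otimes_k\mathcal J^{\mathbb N}(B/k)\cong\mathcal J^{\mathbb N}(A\otimes_kB/k)$ via Exterior Products I), and \prettyref{lem:L45} produces the inverse.

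To verify that $\gamma_{M,N}$ is an isomorphism, I first treat the free case $M=A^{\oplus I}$, $N=B^{\oplus K}$: both sides decompose as direct sums indexed by $I\times K$ (the jet functor commutes with direct sums, cf.\ \prettyref{cor:C19123}), and $\gamma_{M,N}$ reduces termwise to $\gamma_{A,B}$ from \prettyref{prop:POkt2916}. For arbitrary $M$ and $N$, choose free presentations $A^{\oplus J}\to A^{\oplus I}\twoheadrightarrow M$ and $B^{\oplus L}\to B^{\oplus K}\twoheadrightarrow N$; their $k$-tensor product is a presentation of $M\otimes_kN$ as an $A\otimes_kB$-module. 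Applying right exactness of $\mathcal J^N(-/k)$ (\prettyref{lem:L19121}) together with right exactness of $\otimes_k$ produces two compatible right-exact sequences whose left and middle terms agree under the free case, so the isomorphism on the right follows by a short diagram chase.

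The main obstacle is the passage from the finite levels to $N=\mathbb N$. As already seen in \prettyref{prop:POkt2916}, the finite-level maps $\gamma^N_{M,N}$ are not themselves isomorphisms; only a containment of the form $I_{A\otimes_kB/k}^{2N+1}\subseteq(I_{A/k}^{N+1}+I_{B/k}^{N+1})\cdot(A\otimes_kB)$ is available. The argument must therefore be carried out on the inverse systems, exploiting the cofinality of the two filtrations $\{I_{A\otimes_kB/k}^{N+1}\}$ and $\{(I_{A/k}+I_{B/k})^{N+1}\}$ to identify the projective limits. In the stated generality (no finiteness hypothesis on $M$ or $N$) one must additionally verify that $M\otimes_kN$ interacts well with the $I$-adic completions defining $\mathcal J^{\mathbb N}$; this is the one point where some care, along the lines of \cite{Guenther}, is genuinely required.
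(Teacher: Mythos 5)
Your proposal follows essentially the same route as the paper: the canonical map is obtained from the universal property applied to the filtered derivation $d^{\mathbb N}_{M/k}\otimes_k d^{\mathbb N}_{N/k}$ (equivalently, from the extension-of-scalars description of the jet module together with \prettyref{prop:POkt2916}), the free case reduces to Exterior Products I via compatibility with direct sums, and the general case follows from right exactness, free presentations and the five lemma. Your closing caveat about the passage to $N=\mathbb N$ (the finite-level maps not being isomorphisms, and completion versus plain tensor product) flags a genuine subtlety that the paper only dispatches implicitly in \prettyref{prop:P20125}, but it does not make your argument a different one from the paper's.
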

\begin{proof} The standard arguement shows that there is a canonical transformation of bi-functors
$$\gamma_{M,N}: \mathcal J^{\mathbb N}(M\otimes_kN/k)\longrightarrow \mathcal J^{\mathbb N}(M/k)\otimes_k\mathcal J^{\mathbb N}(N/k)$$
that comes from  the fact the the tensor product $d^{\mathbb N}_{M/k}\otimes_kd^{\mathbb N}_{N/k}$\, is a filtered derivation  and the universal representing property of the jet-modules. If $M$ and $N$ are free $A$- and $B$-modules respectively,  the fact that $\gamma_{M,N}$\, is an isomorphism,  follows from (Exterior products I) and the fact that the jet-modules commute with direct sums. Then both sides are right exact in each variable $M,N$\, and choosing free presentations of $M$ and $N$\, respectively, the claim follows by the five lemma.
\end{proof}
\begin{proposition}\mylabel{prop:P2913} (Base change I) Let $A\longrightarrow B,$\, $A\longrightarrow A'$\,  be  homomorphisms of commutative rings and $M$ be a $B$-module. Then,  for each $N\in \mathbb N_0\cup\{\mathbb N\}$\,, there is a canonical isomorphism 
$$\beta_M:\mathcal J^N(M\otimes_AA'/A')\cong \mathcal J^N(M/A)\otimes_AA'.$$
\end{proposition}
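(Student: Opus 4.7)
The plan is to construct the map $\beta_M$ using the universal property of the jet module established in \prettyref{lem:L45}, then reduce the isomorphism claim to the case $M=B$---which is exactly \prettyref{lem:L19125} (Base change 0)---first handling free modules by compatibility with direct sums, and then arbitrary $M$ by a standard right-exactness and five-lemma argument.

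For the construction, I would begin with the universal derivation $d^N_{M/A}\colon M\to \mathcal J^N(M/A)$ and tensor over $A$ with $A'$ to obtain
$$d^N_{M/A}\otimes\id_{A'}\colon M\otimes_AA'\longrightarrow \mathcal J^N(M/A)\otimes_AA'.$$
Via the canonical identification $\mathcal J^N(B/A)\otimes_AA'\cong \mathcal J^N(B\otimes_AA'/A')$ of \prettyref{lem:L19125}, the target carries a natural $\mathcal J^N(B\otimes_AA'/A')$-module structure. I would then verify that the displayed map is a filtered derivation of the $B\otimes_AA'$-module $M\otimes_AA'$ in the sense of \prettyref{def:D1712}: $B$-linearity with respect to the $p_2$-structure is inherited from the analogous property of $d^N_{M/A}$, while $A'$-linearity is built in. Applying \prettyref{lem:L45} then yields the desired canonical $\mathcal J^N(B\otimes_AA'/A')$-linear map $\beta_M$.

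To show $\beta_M$ is an isomorphism, I would first take $M=B$: both sides are then canonically identified with $\mathcal J^N(B\otimes_AA'/A')$, and $\beta_B$ is essentially the isomorphism of \prettyref{lem:L19125}. The case $M=B^{\oplus I}$ follows because both $\mathcal J^N(-/A)$ and $-\otimes_AA'$ commute with direct sums (as used in \prettyref{cor:C19123}). For general $M$, choose a free presentation $B^{\oplus J}\to B^{\oplus I}\to M\to 0$; tensoring with $A'$ yields a free presentation of $M\otimes_AA'$ over $B\otimes_AA'$. Applying the right exact functors $\mathcal J^N(-/A')$ (by \prettyref{lem:L19121}) and $\mathcal J^N(-/A)\otimes_AA'$, and using naturality of $\beta$, one obtains a commutative ladder with right-exact rows whose first two vertical arrows are isomorphisms by the free case; the five lemma then gives the result. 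For $N=\mathbb N$ one simply passes to the inverse limit over the finite levels as in the proof of \prettyref{prop:POkt2916}. The only genuine obstacle is the bookkeeping of module structures---keeping the $p_1$- and $p_2$-structures on $\mathcal J^N(B/A)$ straight, and checking that the second-factor $B\otimes_AA'$-action on $\mathcal J^N(M/A)\otimes_AA'$ is consistent both with $p_2\colon B\to \mathcal J^N(B/A)$ on the first tensorand and with the tautological $A'$-action on the second.
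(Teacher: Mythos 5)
Your proposal matches the paper's own argument essentially step for step: construct $d^N_{M/A}\otimes_A\id_{A'}$, verify it is a filtered derivation, invoke the universal property (\prettyref{lem:L45}) to obtain $\beta_M$, settle $M=B$ via Base change 0 (\prettyref{lem:L19125}), extend to free modules by compatibility with direct sums, and conclude for general $M$ by right exactness, a free presentation, and the five lemma. The only addition, passing to the projective limit for $N=\mathbb N$, is also how the paper handles that case (see \prettyref{prop:P20125}), so the proposal is correct and essentially identical to the paper's proof.
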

\begin{proof} Both sides are  right exact functors from $B-\mbox{Mod}$\, to $\mathcal J^N(B\otimes_AA')-\Mod$\,. This follows from (Base change 0)( see \prettyref{lem:L19125}).\\
There is an $A'$-linear map 
$$t^N: d^N_{M/A}\otimes_A\mbox{Id}_{A'}: M\otimes_AA'\longrightarrow \mathcal J^N(M/A)\otimes_AA'$$ which is a filtered module-derivation. If $I_{B/A}$\, is the diagonal ideal, we have that 
\begin{gather*}t^N(b\cdot m\otimes a')-b\cdot m\otimes a'=d^N_M(bm)\otimes a'-bm\otimes a'\\
\in I_{B/A}\cdot \mathcal J^N(M/A)\otimes_AA'
\subseteq I_{B\otimes_AAÄ/A}\cdot (\mathcal J^N(M/A)\otimes_AA'),
\end{gather*}
which is the definition of a  filtered module derivation. By the universal property of the jet modules, there is a functorial homomorphism (natural transformation)
$$\beta_M: \mathcal J^N(M\otimes_AA'/A')\longrightarrow \mathcal J^N(M/A)\otimes_AA'$$
of functors from $B-\mbox{Mod}$\, to $\mathcal J^N(B\otimes_AA')-\text{Mod}$\,.  If $M\cong B$, by (Base change 0) (see \prettyref{lem:L19125}), there is an isomorphism 
$$\beta_B: \mathcal J^N(B\otimes_AA')\cong \mathcal J^N(B/A)\otimes_AA'.$$
Since the jet-modules commute with direct sums, $\beta_M$\, is an isomorphism for each free $B$-module $B^{\oplus I}$\,. If $M$\, is arbitrary, choose a presentation $$B^{\oplus I}\longrightarrow B^{\oplus J}\longrightarrow M\longrightarrow 0$$
 Since $\beta_{B^{\oplus I}}$\, and $\beta_{B^{\oplus J}}$\, are isomorphisms, $\beta_M$\, is an isomorphism by the five-lemma.
\end{proof}
\begin{proposition}\mylabel{prop:POkt2914}(Base change II) Let $A\longrightarrow B$\, be a homomorphism of rings, $M$ be a $B$-module and  $N$\, be an $A$-module. Then, there is a canonical isomorphism 
$$\alpha_N:\mathcal J^N(M/A)\otimes_AN\cong \mathcal J^N(M/A)\otimes_AN.$$
\end{proposition}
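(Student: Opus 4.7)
The statement as printed has identical left and right sides; I take the intended statement to be the natural companion to \prettyref{prop:P2913}, namely
$$\alpha_N : \mathcal J^N(M \otimes_A N / A) \stackrel{\cong}{\longrightarrow} \mathcal J^N(M/A) \otimes_A N,$$
where $M \otimes_A N$ carries the $B$-module structure inherited from the first factor. The plan is to run the same script that proved Base Change~I: construct the natural transformation using the universal property of jet modules, verify both sides are right exact and commute with direct sums in $N$, and conclude by the five-lemma from a free presentation of $N$.

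First, I construct $\alpha_N$. The composite
$$t^N \colon M \otimes_A N \xrightarrow{d^N_{M/A} \otimes_A \id_N} \mathcal J^N(M/A) \otimes_A N$$
is $B$-linear when the target is given the $B$-module structure induced by the ``second factor'' $B$-action on $\mathcal J^N(M/A)$. For $b \in B$, $m \in M$, $n \in N$ one verifies
$$t^N(bm \otimes n) - (bm \otimes n) \in I_{B/A} \cdot \bigl(\mathcal J^N(M/A) \otimes_A N\bigr),$$
since $d^N_{M/A}(bm) - bm \in I_{B/A} \cdot \mathcal J^N(M/A)$ by \prettyref{def:D1712}. Hence $t^N$ is a filtered derivation on the $B$-module $M \otimes_A N$, and \prettyref{lem:L45} produces a unique $\mathcal J^N(B/A)$-linear map $\alpha_N$ with $\alpha_N \circ d^N_{M\otimes_A N/A} = t^N$. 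Naturality in $N$ is immediate from the universal property.

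Second, I reduce to the case $N = A$. The right-hand functor $\mathcal J^N(M/A) \otimes_A (-)$ is manifestly right exact and commutes with arbitrary direct sums. For the left-hand functor, $N \mapsto M \otimes_A N$ is right exact and commutes with direct sums as a functor from $A$-modules to $B$-modules, and $\mathcal J^N(-/A)$ on $B$-modules is right exact by \prettyref{lem:L19121} and commutes with direct sums (used already in \prettyref{cor:C19123}). Composition preserves both properties. When $N = A$ the canonical identifications $M \otimes_A A = M$ and $\mathcal J^N(M/A) \otimes_A A = \mathcal J^N(M/A)$ show that $\alpha_A$ is the identity; hence $\alpha_{A^{\oplus I}}$ is an isomorphism for every free $A$-module. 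Choosing a free presentation $A^{\oplus I} \to A^{\oplus J} \to N \to 0$ and applying the five-lemma to the resulting diagram of right exact sequences concludes the proof.

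The only step requiring genuine care is the verification that $t^N$ is filtered with respect to the correct $B$-action: the $B$-module structure used to invoke \prettyref{lem:L45} is the second-factor structure on $\mathcal J^N(M/A)$ extended through $\otimes_A N$, while the $A$-action used to form the tensor product is the restriction of that same structure along $A \to B$. Once these two structures are pinned down, the rest of the argument is a direct transcription of the proof of \prettyref{prop:P2913}.
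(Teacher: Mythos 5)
Your reading of the misprinted statement matches the paper's intent, and your proof is essentially the paper's own argument: construct $\alpha_N$ from the filtered derivation $d^N_{M/A}\otimes_A\id_N$ via the universal property, note both sides are right exact functors in $N$ commuting with direct sums, check the free case, and finish with a free presentation and the five-lemma. The only cosmetic difference is that you reduce the free case to $N=A$ (where $\alpha_A$ is the identity) rather than checking $N=A^{\oplus I}$ directly, which changes nothing of substance.
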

\begin{proof}  Fixing the $B$-module $M$,  both sides  can be considered as functors from $A-\mbox{Mod}$\, to $\mathcal J^N(B/A)-\mbox{Mod}$\,. For each $A$-module $N$, there is an $A$-linear map 
$$M\otimes_AN\stackrel{d^N_M\otimes_A\text{Id}_N}\longrightarrow \mathcal J^N(M/A)\otimes_AN.$$ This is a filtered module derivation, i.e., if $I_{B/A}$\, is the diagonal ideal, then 
\begin{gather*}d^N(b\cdot (m\otimes n))-b\cdot (m\otimes n)=d^N_{M/k}(b\cdot m)\otimes n-b\cdot m\otimes n \in \\I_{B/A}\cdot (\mathcal J^N(M/A)\otimes_AN)
\subseteq I_{B\otimes_AA'}\cdot (\mathcal J^N(M/A)\otimes_AN,
\end{gather*}
because $d^N_{M/A}$\, is a module derivation. By the representing property of the jet-modules, there is a unique homomorphism of $\mathcal J^N(B/A)$-modules
$$\alpha_N: \mathcal J^N(M\otimes_AN/A)\longrightarrow \mathcal J^N(M/A)\otimes_AN.$$
This homomorphism is in fact a natural transformation of functors from $A-\text{Mod}$\, to $\mathcal J^N(B/A)-\mbox{Mod}.$ Both functors are right exact functors. 
If $N=A^{\oplus I}$\, is a free $A$-module, then both sides are isomorphic to $\mathcal J^N(M/A)^{\oplus I}$\, because the jet-modules commute with direct sums.\\
In the general case, choose a presentation 
$$A^{\oplus I}\longrightarrow A^{\oplus J}\longrightarrow N\longrightarrow 0.$$ We know that $\alpha_{A^{\oplus I}}$\, and $\alpha_{A^{\oplus J}}$\, are isomorphisms, so by the five lemma , it follows that $\alpha_N$\, is an isomorphism.
\end{proof} 
\begin{proposition}\mylabel{prop:PNov4}(Tensor Products)\\
 Let $k\longrightarrow A$\, be a homomorphism of commutative rings and $M,N$\, be two $A$-modules. Then, for each $N\in \mathbb N_0\cup\{\mathbb N\}$\, , there is a canonical functorial isomorphism
$$\theta_{M,N}: \mathcal J^N(M\otimes_AN/k)\stackrel{\cong}\longrightarrow \mathcal J^N(M/k)\otimes_{\mathcal J^N(A/k)}\mathcal J^N(N/k),$$
in the sense that both sides are bi-functors to $\mathcal J^N(A/k)-\Mod$\, and $\theta_{M,N}$\, is a natural transformation of bifunctors that is for each object $(M,N)$\, an isomorphism.
\end{proposition}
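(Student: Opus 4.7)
My plan is to construct $\theta_{M,N}$ via the universal property of the jet module (\prettyref{lem:L45}), verify it is an isomorphism in the free case, and then reduce the general case by the same two-step five lemma argument used in \prettyref{prop:POkt298} and \prettyref{prop:POkt2914}.

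First, to construct the map, recall that $d^N_{M/k}$ is $A$-linear via the second factor $p_2\colon A\to \mathcal J^N(A/k)$, so $d^N_{M/k}(am)=\overline{1\otimes a}\cdot d^N_{M/k}(m)$, and likewise for $N$. Consider the assignment $(m,n)\mapsto d^N_{M/k}(m)\otimes d^N_{N/k}(n)$ from $M\times N$ into $\mathcal J^N(M/k)\otimes_{\mathcal J^N(A/k)}\mathcal J^N(N/k)$. It is $A$-balanced because $d^N(am)\otimes d^N(n)=(\overline{1\otimes a}\cdot d^N_{M/k}(m))\otimes d^N_{N/k}(n)=d^N_{M/k}(m)\otimes(\overline{1\otimes a}\cdot d^N_{N/k}(n))=d^N_{M/k}(m)\otimes d^N_{N/k}(an)$, the middle equality being valid precisely because the tensor product is taken over $\mathcal J^N(A/k)$ rather than merely over $A$. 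The resulting $A$-linear map $M\otimes_A N\to \mathcal J^N(M/k)\otimes_{\mathcal J^N(A/k)}\mathcal J^N(N/k)$ is a filtered derivation in the sense of \prettyref{def:D1712}, so \prettyref{lem:L45} extends it uniquely to a $\mathcal J^N(A/k)$-linear homomorphism $\theta_{M,N}$; uniqueness in the universal property automatically forces naturality in both arguments.

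Next, both sides, regarded as bifunctors from $A$-modules to $\mathcal J^N(A/k)$-modules, are right exact in each variable: the left side by \prettyref{lem:L19121} combined with right exactness of $\otimes_A$, the right side by \prettyref{lem:L19121} combined with right exactness of $\otimes_{\mathcal J^N(A/k)}$. On free modules $\theta$ is patently an isomorphism: for $M=A^{\oplus I}$ and $N=A^{\oplus J}$ both sides collapse to $\mathcal J^N(A/k)^{\oplus (I\times J)}$, since the jet functor commutes with direct sums (\prettyref{cor:C19123}) and the two tensor products distribute over direct sums, and $\theta$ sends the obvious generators to the corresponding generators.

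The general case then follows by two successive applications of the five lemma, exactly as in \prettyref{prop:POkt2914}. First fix $N$ free and take a free presentation $A^{\oplus I}\to A^{\oplus J}\to M\to 0$; applying both right exact bifunctors yields a commutative diagram of right exact sequences in which $\theta_{A^{\oplus I},N}$ and $\theta_{A^{\oplus J},N}$ are isomorphisms, whence $\theta_{M,N}$ is an isomorphism. Then fix this arbitrary $M$ and repeat with a free presentation of $N$. The completed case $N=\mathbb N$ is handled by passing to the projective limit, as in \prettyref{prop:POkt2916}. The only step needing genuine care is the well-definedness of the balanced map defining $\theta$ --- specifically the manoeuvre of shifting $\overline{1\otimes a}$ across the tensor sign over $\mathcal J^N(A/k)$; once that is in place, everything else is entirely formal.
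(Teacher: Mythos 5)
Your proposal is correct and follows essentially the same route as the paper: construct $\theta_{M,N}$ from the filtered derivation $d^N_{M/k}\otimes d^N_{N/k}$ via the universal property of \prettyref{lem:L45}, get naturality from uniqueness, verify the free case, and conclude by right exactness and the five lemma, with the completed case $N=\mathbb N$ by projective limits. The only cosmetic difference is that you run the five-lemma reduction in both variables, whereas the paper notes that for $M$ free and $N$ arbitrary both sides are already $\bigoplus_{i\in I}\mathcal J^N(N/k)$, so a single reduction in the $M$-variable suffices.
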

\begin{proof} The arguement is standard.
There is a canonical homomorphism
\begin{gather*}t^N_{M\otimes_AN}:\,\,M\otimes_AN\stackrel{d^K_M\otimes_Ad^K_N}\longrightarrow \mathcal J^N(M/k)^{(2)}\otimes_A\mathcal J^N(N/k)^{(2)}\\
\twoheadrightarrow \mathcal J^N(M/k)\otimes_{\mathcal J^N(A/k)}\mathcal J^N(N/k),
\end{gather*}
where the superscript $(-)^{(2)}$\, indicates that the jet-modules are considered with respect to the second $A$-module structure.
 By the universal property of $\mathcal J^N(M\otimes_AN/k)$, there is a unique homomorphism of $\mathcal J^N(A/k)$-modules
$$\theta_{M,N}: \mathcal J^N(M\otimes_AN/k)\longrightarrow \mathcal J^N(M/k)\otimes_{\mathcal J^N(A/k)}\mathcal J^N(N/k).$$
That this is a natural transformation of bi-functors follows from the uniqueness of $\theta_{M,N}$.\\
Now, if $M= A^{\oplus I}$\, is free, then $\theta_{M,N}$\, is an isomorphism (both sides are isomorphic to 
$\bigoplus_{i\in I}\mathcal J^N(N/k)$\,.  Furthermore, both sides are right exact functors in the $M$-variable for fixed $N$, so the result follows by choosing a presentation $A^{\oplus I}\longrightarrow A^{\oplus J}\twoheadrightarrow M.$
\end{proof}
Combining (Base change I) and (Base change II) we get
\begin{proposition}\mylabel{prop:P2012}(Base change III) Let $A\longrightarrow B$\, and $A\longrightarrow A'$\, be homomorphisms of commutative rings, $M$\, be a $B$-module and $N$\, be an $A'$-module. Then, for each $N\in \mathbb N_0\cup\{\mathbb N\},$ functorial in $N$, there are isomorphisms of  $\mathcal J^N(B\otimes_AA'/A')$-modules
$$\alpha_{M,N}: \mathcal J^N(M\otimes_AN/A')\stackrel{\cong}\longrightarrow \mathcal J^N(M/A)\otimes_AN.$$
\end{proposition}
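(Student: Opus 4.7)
The plan is to obtain $\alpha_{M,N}$ as a composite of the isomorphisms already established in (Base change I) and (Base change II), using the standard identification
$$M \otimes_A N \;\cong\; (M \otimes_A A') \otimes_{A'} N$$
which holds because $N$ is an $A'$-module. Under this identification, $M \otimes_A N$ becomes a $B \otimes_A A'$-module via its first factor, which is precisely the module structure that makes the target $\mathcal J^N(M/A) \otimes_A N$ a $\mathcal J^N(B \otimes_A A'/A')$-module (since the right hand side inherits such a structure from $\mathcal J^N(M/A)\otimes_A A'$ via \prettyref{prop:P2913}).

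First I would apply (Base change II), \prettyref{prop:POkt2914}, to the ring map $A' \longrightarrow B \otimes_A A'$, the $(B\otimes_A A')$-module $M \otimes_A A'$, and the $A'$-module $N$. This yields a canonical isomorphism
$$\mathcal J^N\bigl((M \otimes_A A') \otimes_{A'} N / A'\bigr) \;\cong\; \mathcal J^N(M \otimes_A A'/A') \otimes_{A'} N$$
of $\mathcal J^N(B \otimes_A A'/A')$-modules. Next I would apply (Base change I), \prettyref{prop:P2913}, to rewrite the first tensor factor on the right as
$$\mathcal J^N(M \otimes_A A'/A') \;\cong\; \mathcal J^N(M/A) \otimes_A A'.$$
Substituting and collapsing the resulting $A'$-tensor via the associativity isomorphism $(\mathcal J^N(M/A)\otimes_A A')\otimes_{A'} N \cong \mathcal J^N(M/A) \otimes_A N$, I obtain the desired isomorphism $\alpha_{M,N}$.

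To make $\alpha_{M,N}$ canonical and functorial in $N$, I would verify that each of the three steps above is itself functorial in the corresponding variable: (Base change II) is functorial in its second module argument, (Base change I) does not involve $N$, and the associativity isomorphism is obviously natural. The composite is then a natural transformation of right-exact functors $A'\text{-}\Mod \to \mathcal J^N(B \otimes_A A'/A')\text{-}\Mod$.

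The only thing requiring a moment's care is bookkeeping of module structures: one must check that the $B\otimes_A A'$-action on $M\otimes_A N$ coming from the identification with $(M\otimes_A A')\otimes_{A'} N$ agrees with the natural one, so that the composite isomorphism intertwines the $\mathcal J^N(B\otimes_A A'/A')$-module structures on the two sides. This is routine and follows by tracing generators through the presentations used in the proofs of \prettyref{prop:P2913} and \prettyref{prop:POkt2914}; I expect no genuine obstacle beyond this structural check.
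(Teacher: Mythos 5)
Your composite argument is correct, but it is not the route the paper's proof actually takes. The paper fixes $M$, checks that $d^N_{M/A}\otimes_A\id_N\colon M\otimes_AN\longrightarrow \mathcal J^N(M/A)\otimes_AN$ is a filtered derivation relative to $A'$, obtains $\alpha_{M,N}$ directly from the universal property of $\mathcal J^N(M\otimes_AN/A')$, and then proves it is an isomorphism by the usual d\'evissage: the case $N=A'$ is the already established base change isomorphism, free $N$ follows since jet modules commute with direct sums, and general $N$ follows from right exactness and the five lemma. You instead bypass the universal property and the five-lemma step entirely, writing $M\otimes_AN\cong (M\otimes_AA')\otimes_{A'}N$ and splicing together \prettyref{prop:POkt2914} (applied over the base $A'$ to the $B\otimes_AA'$-module $M\otimes_AA'$) with \prettyref{prop:P2913} and the associativity isomorphism $(\mathcal J^N(M/A)\otimes_AA')\otimes_{A'}N\cong \mathcal J^N(M/A)\otimes_AN$; this is arguably what the paper's phrase ``Combining (Base change I) and (Base change II)'' literally promises, and it buys you a shorter argument whose exactness input is already packaged in the cited propositions, at the cost of the module-structure and naturality bookkeeping you flag at the end (note that the $\mathcal J^N(B\otimes_AA'/A')$-structure on the target is most cleanly seen via the ring-level isomorphism of \prettyref{lem:L19125}, not \prettyref{prop:P2913}). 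What the paper's construction buys in exchange is an explicit characterization $\alpha_{M,N}\circ d^N_{M\otimes_AN/A'}=d^N_{M/A}\otimes_A\id_N$, which makes canonicity and functoriality in $N$ immediate and is the form in which the result is invoked later (e.g.\ in \prettyref{lem:LOkt2920} and \prettyref{prop:P19129}); with your approach you should record that your composite satisfies this same identity, which follows by chasing $d^N$ through the three identifications.
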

\begin{proof} The arguement is now standard.  We fix the $B$-module $M$\,. One checks that 
$$d^N_{M/A}\otimes_A\id_{N}: M\otimes_AN\longrightarrow \mathcal J^N(M/A)\otimes_AN$$
is a filtered module dervation  relative to $A'$, giving rise to 
a functorial homomorphism of $\mathcal J^N(B\otimes_AA')$-modules
$$\alpha_N: \mathcal J^N(M\otimes_AN/A')\stackrel{\cong}\longrightarrow \mathcal J^N(M/A)\otimes_AN.$$
By (Base change II), $\alpha_{A'}$\, is an isomorphism. It then follows $\alpha_N$\, is an isomorphism for a free $A'$-module $N$, and $\alpha_N$\, is then an isomorphism for each $N$ by taking free presentations and application of the five-lemma.
\end{proof}  
\begin{lemma}\mylabel{lem:LOkt2920} Let $A\longrightarrow B$\, be a  smooth homomorphism  (of finite type) of  noetherian rings and $M$\, be a  projective $B$-module. If $$0\longrightarrow N_1\longrightarrow N\longrightarrow N_2\longrightarrow 0$$
is an exact sequence of $A$-modules, then for each $N\in \mathbb N_0\cup\{\mathbb N\}$, there is an exact sequence of $\mathcal J^N(B/A)$\, modules
$$(*)_M: 0\longrightarrow \mathcal J^N(M\otimes_AN_1/A)\longrightarrow \mathcal J^N(M\otimes_AN/A)\longrightarrow \mathcal J^N(M\otimes_AN_2/A)\longrightarrow 0.$$
Furhtermore, the exact sequence $(*)_M$\, is functorial in $M$\, i.e., $(*)_M$\, is a functor from $A-\Mod$\, to the category of exact sequences in $\mathcal J^N(B/A)-Mod$\,.
\end{lemma}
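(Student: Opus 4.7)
The plan is to reduce the statement, via a base change isomorphism already proved, to the assertion that $\mathcal{J}^N(M/A)$ is flat as an $A$-module. First, I would invoke \prettyref{prop:P2012} (Base change III) with $A' = A$, which gives canonical functorial isomorphisms
$$\alpha_{M,N_i}: \mathcal{J}^N(M \otimes_A N_i/A) \stackrel{\cong}{\longrightarrow} \mathcal{J}^N(M/A) \otimes_A N_i$$
for $N_i \in \{N_1, N, N_2\}$. By the naturality of $\alpha$ in the second variable, the sequence $(*)_M$ is isomorphic, as a three-term sequence of $\mathcal{J}^N(B/A)$-modules, to
$$0 \longrightarrow \mathcal{J}^N(M/A) \otimes_A N_1 \longrightarrow \mathcal{J}^N(M/A) \otimes_A N \longrightarrow \mathcal{J}^N(M/A) \otimes_A N_2 \longrightarrow 0,$$
so exactness of $(*)_M$ is equivalent to the $A$-flatness of $\mathcal{J}^N(M/A)$.

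Next I would establish this flatness. The proof of \prettyref{prop:P20124} already shows that $\mathcal{J}^N(B/A)$ is a projective $B$-module whenever $A \to B$ is smooth (this step does not use finite generation of $M$; it only uses the jet-bundle exact sequence together with projectivity of $\Omega^{(1)}(B/A)^{\otimes^s N}$). For $M$ a projective $B$-module, write $M \oplus M' \cong B^{\oplus I}$; since the jet module functor commutes with direct sums, $\mathcal{J}^N(M/A)$ appears as a direct summand of $\mathcal{J}^N(B/A)^{\oplus I}$ and is therefore projective over $B$. Since $A \to B$ is smooth and hence flat, restriction of scalars makes $\mathcal{J}^N(M/A)$ flat over $A$, whence tensoring with the given short exact sequence of $A$-modules preserves exactness. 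The functoriality of $(*)_M$ in $M$ follows from the functoriality clause of Base change III: a $B$-linear map $M \to M'$ induces a morphism of the tensored sequences that transports back through $\alpha$ to a morphism $(*)_M \to (*)_{M'}$.

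The main obstacle I see is the completed case $N = \mathbb{N}$, where $\mathcal{J}^{\mathbb{N}}$ is defined by $I_{B/A}$-adic completion and \prettyref{prop:P20124} is formulated only for finite $N$. My plan here is to deduce the exactness from the finite levels: the sequences $(*)_M$ for $N \in \mathbb{N}_0$ form a projective system of short exact sequences of $\mathcal{J}^N(B/A)$-modules whose transition maps are surjective (by the standard jet-bundle exact sequence $0 \to I^N/I^{N+1} \to \mathcal{J}^N \to \mathcal{J}^{N-1} \to 0$). Hence the Mittag-Leffler condition holds, $\varprojlim^{1}$ vanishes, and passing to the inverse limit yields the required exact sequence for $N = \mathbb{N}$, preserving functoriality in $M$.
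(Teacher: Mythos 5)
Your proposal follows essentially the same route as the paper: the paper's proof is exactly the combination of (Base change III, \prettyref{prop:P2012}) to rewrite $(*)_M$ as the sequence tensored with $\mathcal J^N(M/A)$, plus the fact that $\mathcal J^N(M/A)$ is a projective $B$-module (\prettyref{prop:P20124}), hence flat over $A$ since $A\to B$ is smooth. Your additional care (the direct-summand argument extending \prettyref{prop:P20124} beyond finitely generated $M$, and the Mittag-Leffler passage to the limit for $N=\mathbb N$, which the paper only sketches in \prettyref{prop:P20125}) is correct and fills in details the paper leaves implicit.
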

\begin{proof} Follows from (Base change III) and the functor properties of the jet-modules and the fact, that in this case $\mathcal J^N(M/A)$\, is a projective $B$-module, hence a flat $A$-module. (see \prettyref{prop:P20124})
\end{proof}
\begin{remark}\mylabel{prop:P20125} We have only proved the fundamental properties of the jet-modules  (tensor products, base change ...) for $N\in \mathbb N_0$\,. But the result for $N=\mathbb N$\, follows by taking projective limits of the isomorphisms obtained for $N\in \mathbb N_0$.
\end{remark}
Because of lack of reference, we want to prove the following inocuous generalization of the formal inverse function theorem.
\begin{lemma} Let $A$\, be a noethrian ring and let  $C:= A[[y_1,...,y_n]]/(f_1,...,f_m)$\, with $m\geq n$\, be a formal power series ring such that some $(n\times n)$-minor has a determinant which is a unit in $A$.
Then $C\cong K$\,.
\end{lemma}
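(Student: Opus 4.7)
The plan is to apply the formal inverse function theorem over the noetherian base $A$, using it to change coordinates so that the first $n$ relations become $y_1,\dots,y_n$. After reindexing we may assume the unit $(n\times n)$-minor is the one formed by $\partial f_i/\partial y_j$ for $1\le i,j\le n$, and by subtracting constants we may assume each $f_i$ lies in the ideal $(y_1,\dots,y_n)$ (absorbing any constant terms into a quotient of $A$ at the end).

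Consider then the $A$-algebra endomorphism $\Phi$ of $A[[y_1,\dots,y_n]]$ sending $y_i$ to $f_i$ for $i=1,\dots,n$. Its linearisation at the origin is, by hypothesis, the invertible matrix over $A$ given by the chosen minor. I would construct a two-sided inverse $\Psi$ by a Newton/Hensel-type iteration: writing $\Psi(y_i)=\sum_{N\ge 1}\psi_i^{(N)}$ with $\psi_i^{(N)}$ homogeneous of degree $N$, the condition $\Phi\circ\Psi\equiv\id$ modulo $(y)^{N+1}$ reduces at each step to a linear system over $A$ whose coefficient matrix is the leading Jacobian, and this is uniquely solvable precisely because that matrix is invertible in $M_n(A)$. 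The $(y)$-adic completeness of $A[[y]]$ assembles the $\psi_i^{(N)}$ into power series, and a symmetric argument gives $\Psi\circ\Phi=\id$, so $\Phi$ is an $A$-algebra automorphism.

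Pulling back the presentation of $C$ along $\Psi$ yields
\[
C \;\cong\; A[[y_1,\dots,y_n]]\big/\bigl(y_1,\dots,y_n,\Psi(f_{n+1}),\dots,\Psi(f_m)\bigr) \;\cong\; A\big/\bigl(\overline{\Psi(f_{n+1})},\dots,\overline{\Psi(f_m)}\bigr),
\]
where the overline denotes reduction modulo $(y_1,\dots,y_n)$. In particular $C$ is a quotient of $A$; when $m=n$ one has $C\cong A$ outright, which I take to be the intended reading of the statement (the ``$K$'' in the displayed claim appearing to be a typo for $A$).

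The one non-formal obstacle is establishing the Newton iteration over a noetherian base rather than a field: one must verify that the hypothesis ``constant part of the Jacobian is invertible in $M_n(A)$'' is enough to make the recursive linear systems solvable at every order, with no further invertibility requirement inside $A[[y]]$. Once this is in place, everything else is routine bookkeeping with homogeneous components of formal power series.
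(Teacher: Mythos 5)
Your core construction is the paper's own argument: reduce to the top-left $n\times n$ block and invert the system $y_i\mapsto f_i$ order by order (Cramer's rule for the linear term, then induction on the degree), i.e.\ the formal inverse function theorem over the base ring $A$; your worry about the Newton step over a noetherian base is unfounded, since at each order the linear system has the constant Jacobian as coefficient matrix and invertibility in $M_n(A)$ is all that is used (this is exactly the paper's remark that Bochner's field-case proof carries over verbatim). The genuine gap is at the end. The lemma (with ``$K$'' indeed a typo for $A$) asserts $C\cong A$ for all $m\ge n$, and that is precisely what is used later in the proof of \prettyref{lem:L30111}, where the \'etale algebra is presented by $m\ge n$ relations $d^1f_1,\dots,d^1f_m$, all with zero constant term. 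You only conclude that $C$ is a quotient of $A$ and then retreat to the case $m=n$; that retreat neither proves the stated lemma nor suffices for its application. Also, your ``subtract constants'' reduction is not legitimate as stated: for $i\le n$ the substitution $y_i\mapsto f_i$ is only defined when $f_i\in(y_1,\dots,y_n)$, and if nonzero constant terms are allowed the statement is simply false (take $n=1$, $f_1=y$, $f_2=p$ with $p\in A$ a nonunit: the unit-minor hypothesis holds, yet $C\cong A/(p)$). The correct reading is that all $f_i$ have vanishing constant term, as they do in the intended application.

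The missing step is short, and your own setup already contains it. Since each $\Psi(y_i)$ is built starting in degree $1$, one has $\Psi(y_i)\in(y_1,\dots,y_n)$, and from $\Phi\circ\Psi=\id$ it follows that $y_i=\Psi(y_i)(f_1,\dots,f_n)\in(f_1,\dots,f_n)$; combined with $f_i\in(y_1,\dots,y_n)$ this gives the equality of ideals $(f_1,\dots,f_n)=(y_1,\dots,y_n)$ in $A[[y_1,\dots,y_n]]$. Consequently every $f_j$ with $j>n$, having zero constant term, already lies in $(f_1,\dots,f_n)$; equivalently, in your displayed isomorphism $\Psi(f_j)\in(\Psi(y_1),\dots,\Psi(y_n))\subseteq(y_1,\dots,y_n)$, so $\overline{\Psi(f_j)}=0$ and the quotient collapses to $C\cong A$ for every $m\ge n$. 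This one observation is exactly the content of the paper's ``a fortiori'' sentence; with it added, your proof is complete and coincides in substance with the paper's.
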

\begin{proof} Without loss of generality, let this be the left upper most minor . But then, in the power series ring $A[[d^1x_1,...,d^1x_n]]$\,, by the formal inverse function, theorem $d^1f_1,...,d^1f_n$\, are formal coordinates and 
$$A[[d^1x_1,...,d^1x_n]]/(d^1f_1,...,d^1f_n)\cong A$$ and a fortiori $A[[d^1x_1,...,d^1x_n]]/(d^1f_1,...,d^1f_m)\cong A$\,. The proof in \cite{Bochner} in the introductory chapter given for the case where $A$\, is a field carries over verbatim. One has to develop $d^1x_i$\, into a formal power series in the  $d^1f_j$,
$$d^1x_i=\sum_{j=1}^nb_{ji}d^1x_i+\sum_{J}(\underline{d^1f})^J.$$
 By the Cramer rule one determines the $b_{ji}$\, and then, inductively one determines the $b_J, \mid J\mid \geq 2$\,.
\end{proof}
\begin{lemma}\mylabel{lem:L30111}( etale invariance of the jet modules). Let $k\longrightarrow A\longrightarrow B$\, be homomorphism  of finite type of noetherian rings with $A\longrightarrow B$\, being etale. Let $M$\, be an $A$-module. Then, there is a canonical isomorphism
$$\alpha_M: \mathcal J^N(M\otimes_AB/k)\stackrel{\cong}\longrightarrow \mathcal J^N(M/A)\otimes_AB$$
which is a natural transformation of right exact functors from $(A-\Mod)$\, to $\mathcal J^N(B/k)-\Mod$\,.
\end{lemma}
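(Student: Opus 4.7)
The plan is to reduce the assertion to the special case $M = B$ and then combine with the Tensor Products proposition (\prettyref{prop:PNov4}). Reading the right hand side as $\mathcal J^N(M/k)\otimes_A B$ (which is what makes the statement depend on etaleness; the variant with $\mathcal J^N(M/A)$ is already (Base change I), see \prettyref{prop:P2913}), applying (Tensor Products) to the $A$-modules $M$ and $B$ yields
$$\mathcal J^N(M \otimes_A B / k) \;\cong\; \mathcal J^N(M/k) \otimes_{\mathcal J^N(A/k)} \mathcal J^N(B/k),$$
so the full statement follows once I establish the canonical isomorphism of $\mathcal J^N(A/k)$-algebras
$$(\ast)\qquad \mathcal J^N(B/k) \;\cong\; \mathcal J^N(A/k) \otimes_A B \qquad (\text{tensor via } p_2)$$
in the etale case, since then $\mathcal J^N(M/k) \otimes_{\mathcal J^N(A/k)} \mathcal J^N(B/k)$ collapses to $\mathcal J^N(M/k) \otimes_A B$ by associativity of the tensor product.

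To prove $(\ast)$, I invert the natural map $\phi: \mathcal J^N(A/k) \otimes_A B \to \mathcal J^N(B/k)$ using formal etaleness of $A \to B$. Set $C := \mathcal J^N(A/k)\otimes_A B$ (tensor via $p_2$) and $J := (I_{A/k}/I_{A/k}^{N+1})\otimes_A B \subset C$; then $J^{N+1} = 0$ and the canonical reduction $C/J$ is $A\otimes_A B = B$. The composition $A \xrightarrow{p_1} \mathcal J^N(A/k) \to C$ is an $A$-algebra map whose reduction modulo $J$ agrees with the structural $A \to B$. Formal etaleness of $A \to B$ therefore produces a unique $A$-algebra lift $q_1: B \to C$ of $\id_B$. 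Together with the tautological $q_2: B \to C$, $b \mapsto 1 \otimes b$, one obtains a $k$-algebra map $B \otimes_k B \to C$ satisfying $q_1 \equiv q_2 \pmod J$, which thus sends $I_{B/k}^{N+1}$ into $J^{N+1} = 0$ and factors through the desired map $\psi: \mathcal J^N(B/k) \to C$.

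The identities $\phi\circ\psi = \id$ and $\psi\circ\phi = \id$ are handled by two short verifications. For the first, $\phi \circ q_1$ and the standard $B \to \mathcal J^N(B/k)$, $b \mapsto \overline{b\otimes 1}$, are both $k$-algebra lifts of $\id_B$ through $\mathcal J^N(B/k)\to B$ whose restrictions to $A$ coincide, hence they agree by the uniqueness clause of formal etaleness applied this time with target $\mathcal J^N(B/k)$; this gives $\phi\psi(\overline{b\otimes b'}) = \overline{b\otimes 1}\cdot \overline{1\otimes b'} = \overline{b\otimes b'}$. The second is a direct calculation on simple tensors $(a\otimes a')\otimes b$, using that $a'$ can be absorbed across the tensor symbol via the $p_2$-structure. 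The case $N = \mathbb N$ then follows from the finite-$N$ case by passing to projective limits, as remarked in \prettyref{prop:P20125}. The main technical care I expect to exert is keeping the $p_1$- and $p_2$-algebra structures on $\mathcal J^N(A/k)\otimes_A B$ cleanly separated throughout, since both enter the construction of $\psi$ essentially.
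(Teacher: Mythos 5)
Your proof of the key case $(\ast)$, i.e.\ $\mathcal J^N(B/k)\cong\mathcal J^N(A/k)\otimes_{A,p_2}B$, is correct and takes a genuinely different route from the paper: the paper chooses a presentation $B=A[x_1,\ldots,x_n]/(f_1,\ldots,f_m)$ and argues via the Jacobian criterion, Fitting ideals and the formal inverse function lemma, patching over a Zariski cover, whereas you invert the natural map by lifting $\id_B$ against the nilpotent ideal $J\subset C=\mathcal J^N(A/k)\otimes_{A,p_2}B$ using formal etaleness, and settle $\phi\psi=\id$, $\psi\phi=\id$ by the uniqueness of such lifts. That argument is coordinate-free, uses flatness only to identify $J$ with $\overline{I_{A/k}}\cdot C$, and is in my view cleaner than the paper's computation.

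The reduction of the general case to $(\ast)$, however, has a genuine gap: it conflates jet modules formed over $A$ with jet modules formed over $B$. In \prettyref{prop:PNov4} both arguments are $A$-modules and \emph{every} jet module occurring there is formed over the ring $A$; applied to $M$ and the $A$-module $B$ it reads $\overline{A\otimes_k(M\otimes_AB)}\cong\mathcal J^N(M/k)\otimes_{\mathcal J^N(A/k)}\overline{A\otimes_kB}$, and since $\overline{A\otimes_kB}=B\otimes_{A,p_2}\mathcal J^N(A/k)$ by the formula in the proof of \prettyref{lem:L45}, what your displayed equation actually yields is $\mathcal J^N_A(M\otimes_AB/k)\cong\mathcal J^N(M/k)\otimes_AB$ (writing $\mathcal J^N_A$, $\mathcal J^N_B$ for jets formed over $A$, $B$): an identity valid for \emph{every} $A$-algebra $B$, with no etale input, and about the wrong object. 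The lemma concerns $\mathcal J^N_B(M\otimes_AB/k)=\overline{B\otimes_k(M\otimes_AB)}$ — that is why the values lie in $\mathcal J^N(B/k)$-modules — and the symbol $\mathcal J^N(B/k)$ in your application of (Tensor Products) denotes the jet module of $B$ as an $A$-module, not the jet algebra of the $k$-algebra $B$ appearing in $(\ast)$, so $(\ast)$ cannot simply be substituted there. The repair is short and uses $(\ast)$ exactly where etaleness must enter: for any $B$-module $P$ one has $\mathcal J^N_B(P/k)=P\otimes_{B,p_2}\mathcal J^N(B/k)$, and under $(\ast)$ the $p_2$-structure of $\mathcal J^N(B/k)$ corresponds to multiplication on the $B$-factor (your $q_2$), whence $\mathcal J^N_B(P/k)\cong P\otimes_{A,p_2}\mathcal J^N(A/k)=\mathcal J^N_A(P/k)$; taking $P=M\otimes_AB$ and rearranging gives the claim, with naturality and right exactness clear since all identifications are canonical. (Alternatively, follow the paper: promote the case $M=A$ to a natural transformation of right exact functors and finish with free presentations and the five lemma.) Finally, state explicitly which $A$-structure on $\mathcal J^N(M/k)$ the tensor $\otimes_AB$ uses: your collapse produces the $p_2$-gluing, while the paper's map $\phi_M$ base-changes along $p_1$; given the typo in the lemma's statement ($\mathcal J^N(M/A)$ for $\mathcal J^N(M/k)$), this point deserves a sentence of its own.
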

\begin{proof} We first treat the case $M=A$\,. It suffices to show the claim for the full jet module. For finite $N\in \mathbb N_0$\,, the result follows by taking truncations.\\  We choose a presentation $B=A[x_1,...,x_n]/(f_1,..., f_m)$\,. By elementary dimension theory, we must have $m\geq n$\,. We then have 
$$\mathcal J^{\mathbb N}(B/k)=\mathcal J^{\mathbb N}(A/k)\otimes_AB\otimes_B B[[d^1x_1,...,d^1x_n]]/(d^1f_1,...,d^1f_m),$$
by choosing an appropriate presentation of $A/k$. So it suffices to show, that 
$$B[[d^1x_1,...,d^1x_n]]/(d^1f_1,...,d^1f_m)\cong B$$ and we get 
$$\mathcal J^{\mathbb N}(B/k)\cong \mathcal J^{\mathbb N}(A/k)\otimes_AB.$$
We use the Jacobian criterion for smoothness. Considering the Jacobian matrix 
\[
\begin{pmatrix} \partial^1/\partial^1x_1(f_1) & \partial^1/\partial^1x_2(f_1) & \ldots &\partial^1/\partial^1x_n(f_1)\\
               \partial^1/\partial^1x_1(f_2) & \partial^1/\partial^1x_2(f_2) & \ldots  & \partial^1/\partial^1x_n(f_2)\\
               \vdots & \vdots & \ddots &\vdots  \\
               \partial^1/\partial^1x_1(f_m) & \partial^1/\partial^1x_2(f_m) & \ldots & \partial^1/\partial^1x_n(f_m)
               \end{pmatrix}
               \]
          
               We then have that the $n^{th}$ Fitting ideal of this matrix, generated by the $(n\times n)$-minors is equal to the unit ideal in $B$, because it is nonzero modulo each maximal ideal of $B$, and otherwise, if the fitting ideal where not the unit ideal, there would be a maximal ideal containing it, a contradiction.\\
               We want to consider the  $n^{th}$-fitting ideal  of the Jacobian matrix $\mathcal J(\underline{d^1f}/\underline{d^1x})$. To make sense of this, recall that by definition for the jet algebras for free polynomial algebras (see \cite{Guenther})[chapter 6.5, pp. 116-119], and simply by the fact that the universal derivation is a $k$-algebra homomorphism,
               \begin{gather*}f_i+d^1f_i =f_i(x_1+d^1x_1,..., x_n+ d^1x_n)=\\
               \sum_{I}\partial^{|I|}/\partial \underline{x}^I(f)\cdot \underline{d^1x}^I.
               \end{gather*}
               Observe that this sum is finite, since the $f_j$\, are polynomials and considering the $B$-algebra $B[d^1x_1,...,d^1x_n]/(d^1f_1,...,d^1f_m)$\, makes sense. So we can write $d^1f_i$\, as a polynomial in the $d^1x_i$\, with zero constant term and coefficients in $B$.\\
By the above formula, if $I=(0,...,1,...,0)$\,, we get that the first partial derivative of $d^1f_i$\, with respect to the free variable $d^1x_j$\, is just $\partial^1/\partial^1x_j(f_i)\in B$\,. For an arbitrary  multi-index $I$, this equality only holds up to a constant factor $c\in \mathbb N$\,. Thus, we can apply the Jacobian criterion for smoothness in order to conclude that
               $B[d^1x_1,...,d^1x_n]]/(d^1f_1,...,d^1f_m)$\, is etale over $B$, i.e., smooth of relative dimension zero.\\
We  consider the $n^{th}$ fitting ideal $\text{Fitt}^n$  of $\mathcal J(\underline{d^1f}/\underline{d^1x})$\,. The coefficients of the $n\times n$-minors lie actually in $B$. Let $\mathfrak{p}\in \Spec(B)$\, be given with $B_{\mathfrak{p}}/\mathfrak{p}\cdot B_{\mathfrak{p}} =K$\, being the residue field. We consider the reduced ring 
$$K[[d^1x_1,...,d^1x_n]]/(d^1f_1,...,d^1f_m).$$ The $n^{th}$ fitting ideal  of the Jacobian $\mathcal J(\underline{d^1f}/\underline{d^1x})$\, modulo $\mathfrak{p} $\, is then the unit ideal in $K$ which precisely means that the determinant of some $n\times n$-minor must be nonzero. But then, the determinant of this minor is a unit in $B_{\mathfrak{p}}$\, and there exists an open affine $\Spec C\subset \Spec B$\, such that this determinant is a unit in $A$. By the previous lemma, we conclude 
$$A[[d^1x_1,...,d^1x_n]/(d^1f_1,...,d^1f_m)\cong A.$$
 This holds for  each prime  ideal $\mathfrak{p}\in \Spec(B)$\,. Hence, there is a finite  Zariski open affine covering $\Spec B=\bigcup_{i=1}^n\Spec A_i$\, such that 
 $$A_i[[d^1x_1,...,d^1x_n]]/(d^1f_1,...,d^1f_m)\cong A_i$$
 and the claim follows. 
This shows, that the canonical homomorphism
$$\phi_M:\mathcal J^{\mathbb N}(M/k)\otimes_{A,p_1}B\longrightarrow \mathcal J^{\mathbb N}(M\otimes_AB/B),$$
which is simply the map 
$$ \widehat{A\otimes_kM}\otimes_{A,p_1}B\longrightarrow \widehat{B\otimes_kM}\longrightarrow \widehat{B\otimes_k(M\otimes_AB)}$$
is an isomorphism for $M=B$\,. Now, the proof is standard. $\phi_M$\, is a natural transformation of  right exact functors from $B-\Mod$\, to $\mathcal J^N(B/k)-\Mod$\,. The result follows for free $A$-modules $M$, since taking jet-modules commutes with taking direct sums and, choosing a free presentation for  general $M$, the result follows by the five-lemma. 
\end{proof}
\begin{corollary}\mylabel{cor:C19127} (invariance under  Zariski- localization) Let $k$ be a noetherian ring and  $k\longrightarrow A$\, be $k$-algebra of finite type. Let $S\subset A$\, be a multiplicatively closed subset. Then, there is a canonical isomorphism
$$\mathcal J^N(A_S/k)\cong \mathcal J^N(A/k)_S,$$
where $S=S\otimes 1$\, in $\mathcal J^N(A/k).$
\end{corollary}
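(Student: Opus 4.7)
The plan is to derive this statement as a direct consequence of the étale invariance lemma (\prettyref{lem:L30111}), combined with an auxiliary passage to filtered colimits to handle the case when $S$ is not finitely generated as a monoid.

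First, I would handle localization at a single element $s\in S$. The map $A\longrightarrow A[s^{-1}]\cong A[t]/(st-1)$ is étale and of finite type over $A$, since the Jacobian $\partial(st-1)/\partial t=s$ is a unit in $A[s^{-1}]$. Taking $M=A$ in \prettyref{lem:L30111} then yields the canonical isomorphism
$$\mathcal J^N(A[s^{-1}]/k)\cong \mathcal J^N(A/k)\otimes_AA[s^{-1}]=\mathcal J^N(A/k)[s^{-1}],$$
where the localization on the right is taken with respect to the $A$-module structure induced by $p_1:A\longrightarrow\mathcal J^N(A/k)$.

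For a general multiplicatively closed set $S$, I would write $A_S=\colim_{s\in\Sigma}A[s^{-1}]$ as a filtered colimit, where $\Sigma$ is the submonoid of $A$ generated by $S$. Since the functor $B\mapsto\mathcal J^N(B/k)=(B\otimes_kB)/I_{B/k}^{N+1}$ is built from tensor products and quotients by diagonal ideals, each of which commutes with filtered colimits of $k$-algebras, I would conclude
$$\mathcal J^N(A_S/k)=\colim_{s\in\Sigma}\mathcal J^N(A[s^{-1}]/k)=\colim_{s\in\Sigma}\mathcal J^N(A/k)[s^{-1}]=\mathcal J^N(A/k)_S.$$

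A minor technical point worth recording is that the parenthetical remark in the statement (``$S=S\otimes 1$'') is well-posed because the two candidate localizations $\mathcal J^N(A/k)[p_1(S)^{-1}]$ and $\mathcal J^N(A/k)[p_2(S)^{-1}]$ coincide: the difference $p_1(s)-p_2(s)=-d^1s$ lies in $I_{A/k}$, hence is nilpotent modulo $I_{A/k}^{N+1}$, so $p_1(s)$ is a unit in any $\mathcal J^N(A/k)$-module if and only if $p_2(s)$ is. I expect no real obstacle in the argument; the corollary is essentially a bookkeeping consequence of \prettyref{lem:L30111} once the compatibility of jet modules with filtered colimits is noted.
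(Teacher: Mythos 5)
Your proposal is correct and follows essentially the route the paper intends: the corollary is stated as a consequence of the \'etale invariance lemma (\prettyref{lem:L30111}), applied to the standard \'etale map $A\longrightarrow A[t]/(st-1)\cong A[s^{-1}]$. Your additional bookkeeping --- passing to the filtered colimit over the monoid generated by $S$ (needed because $A\longrightarrow A_S$ itself is not of finite type for general $S$) and the observation that $p_1(s)$ and $p_2(s)$ differ by the nilpotent element $d^1s$ modulo $I_{A/k}^{N+1}$ --- correctly supplies the details the paper leaves implicit.
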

\begin{corollary}\mylabel{cor:C19128} If $(A,\mathfrak{m},\kappa)$\, is a local ring that is a $k$-algebra essentially of finite type, then for each $N\in \mathbb N_0$\,, $\mathcal J^N(A/k)$\, is an $A$-algebra essentially of finite type.
\end{corollary}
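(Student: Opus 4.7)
The plan is to reduce the statement to the finitely generated case (\prettyref{cor:C19126}) via the Zariski-localization invariance established in \prettyref{cor:C19127}. Since $A$ is a $k$-algebra essentially of finite type, by definition there exists a finitely generated $k$-algebra $B$ together with a multiplicatively closed subset $S\subset B$ such that $A\cong B_S$; concretely, one may take $B$ to be a polynomial ring over $k$ modulo an ideal and $S=B\setminus \mathfrak{q}$ where $\mathfrak{q}$ is the preimage of $\mathfrak{m}$ under the localization map.

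First, I would apply \prettyref{cor:C19126} to deduce that $\mathcal J^N(B/k)$ is a finitely generated $B$-algebra. Second, \prettyref{cor:C19127} furnishes the canonical isomorphism
$$\mathcal J^N(A/k)\cong \mathcal J^N(B_S/k)\cong \mathcal J^N(B/k)_S,$$
where on the right $S$ is identified with $p_1(S)=S\otimes 1$. Third, the localization of a finitely generated $B$-algebra at $S\subset B$ is a finitely generated $B_S$-algebra (any generators of $\mathcal J^N(B/k)$ over $B$ remain generators of $\mathcal J^N(B/k)_S$ over $B_S$). Therefore $\mathcal J^N(A/k)$ is even a finitely generated $A$-algebra, and in particular essentially of finite type over $A$.

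One small point worth noting in passing, to reassure the reader that the statement does not depend on which of the two $A$-algebra structures on $\mathcal J^N(A/k)$ one uses: for $s\in S$ the difference $1\otimes s-s\otimes 1=d^1 s$ lies in $I_{A/k}$, hence is nilpotent in $\mathcal J^N(A/k)$; consequently inverting $p_1(s)$ automatically inverts $p_2(s)$, so the two localizations coincide. I do not foresee any real obstacle here, as the corollary is a direct combination of the two preceding results.
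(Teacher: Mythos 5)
Your proposal is correct and is exactly the intended derivation: the paper states this corollary (with no written proof) as the immediate combination of \prettyref{cor:C19126} (finite generation of $\mathcal J^N(B/k)$ over a finite type $B$) and \prettyref{cor:C19127} (invariance under Zariski localization), which is precisely your argument. Your side remark that $d^1s=1\otimes s-s\otimes 1$ is nilpotent in $\mathcal J^N(A/k)$, so the two localizations coincide, is a correct and worthwhile clarification.
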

 \subsection{Comparison with the $\mathcal C^{\infty}$-category}
 It is well known that  for $\mathcal C^{\infty}$-manifolds and vector bundles on them,  being a differential operator is a local property, which is in this category one way to define them. In this subsection we show that in the algebraic category, an analogous statement holds, if we use the etale topology on a smooth algebraic scheme $X$.
 \begin{proposition}\mylabel{prop:P5}Let $S$ be a noetherian  scheme  and $\pi: X\longrightarrow S$ be a smooth $S$- scheme  of finite type of dimension $n$\, over $S$ and $\mathcal E$\, be a locally free coherent $\mathcal O_X$-module. Let $D: \mathcal E\longrightarrow \mathcal E$\, be a homomorphism of  etale sheaves of $\pi^{-1}\mathcal O_S$-modules. Suppose, that for each scheme point $x\in X$, there is an etale neighbourhood $p_x: U_x\longrightarrow X$\, such that there is a trivialization $\phi_x: p_x^*\mathcal E\cong \mathcal O_{U_x}^{\oplus r}$\, plus an etale  surjective morphism $q_x: U_x\longrightarrow  V_x\subseteq \mathbb A^n_S$\,. Then $V_x$ is Zariski- open in $\mathbb A^n_S$\,. Let $\Gamma(U_x,D): \mathcal O_{U_x}^{\oplus r}\longrightarrow \mathcal O_{U_x}^{\oplus r}$\,  be the section over $U_x$ of $D$ with respect to the trivialization of  $\phi_x$\, of $\mathcal E$ around $x\in X$\,. We say that $D$ is a classical linear partial differential operator if there is a partial differential operator $D_x: \mathcal O_{V_x}^{\oplus r}\longrightarrow \mathcal O_{V_x}^{\oplus r} $\,, that pulls back under $q_x$\, to $\Gamma(U_x, D)$\,. Then, there is an $\mathcal O_X$-linear homomorphism $\widetilde{D}:\mathcal J^N(\mathcal E/S)\longrightarrow \mathcal E$\,  such that $D=\widetilde{D}\circ d^N_{\mathcal E/S}$\,, and conversely, every $D=\widetilde{D}\circ d^N_{\mathcal E/S}$\, is of this form.\\
Furthermore, every $\mathcal O_X$-linear homomorphism from $\Omega^{\leq N}(\mathcal E/S)\longrightarrow \mathcal E$\, (see \cite{Guenther}) corresponds to a classical differential operator $\mathcal E\longrightarrow \mathcal E$.
\end{proposition}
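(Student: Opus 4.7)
The plan is to reduce everything to the affine case of Section 2.1 by invoking the etale invariance of jet modules \prettyref{lem:L30111}, and to glue the local constructions via the universal property \prettyref{lem:L45}. First, the Zariski openness of $V_x$ is immediate: etale morphisms of finite type are open on underlying topological spaces, so $V_x = q_x(U_x)$ is an open subset of $\mathbb{A}^n_S$.

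For the forward direction, assume $D$ is classical. On each $V_x$ the classical operator $D_x$ corresponds under the bijection of Section 2.1 to an $\mathcal{O}_{V_x}$-linear map $\widetilde{D}_x : \mathcal{J}^N(\mathcal{O}_{V_x}^{\oplus r}/S) \to \mathcal{O}_{V_x}^{\oplus r}$. Pulling back along the etale morphism $q_x$ and using \prettyref{lem:L30111} to identify $q_x^*\mathcal{J}^N(\mathcal{O}_{V_x}^{\oplus r}/S)$ with $\mathcal{J}^N(\mathcal{O}_{U_x}^{\oplus r}/S)$, then transporting via $\phi_x$ and applying \prettyref{lem:L30111} again to $p_x$, yields a map of etale sheaves $\mathcal{J}^N(\mathcal{E}/S)|_{U_x} \to \mathcal{E}|_{U_x}$. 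By the uniqueness part of \prettyref{lem:L45}, each such local map is the unique $\mathcal{O}_{U_x}$-linear factorization of $D|_{U_x}$ through $d^N_{\mathcal{E}/S}$; since $D$ is a globally defined homomorphism of etale sheaves, these local maps agree on overlaps of the etale cover $\{U_x \to X\}$. Etale descent for quasi-coherent sheaves then produces a global $\mathcal{O}_X$-linear $\widetilde{D} : \mathcal{J}^N(\mathcal{E}/S) \to \mathcal{E}$ satisfying $\widetilde{D}\circ d^N_{\mathcal{E}/S} = D$.

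The converse reverses this construction: given $\widetilde{D}$, set $D := \widetilde{D} \circ d^N_{\mathcal{E}/S}$. Restricting to $U_x$, applying $\phi_x$, and invoking \prettyref{lem:L30111} for $p_x$ identifies $p_x^*\widetilde{D}$ with an $\mathcal{O}_{U_x}$-linear map $\mathcal{J}^N(\mathcal{O}_{U_x}^{\oplus r}/S) \to \mathcal{O}_{U_x}^{\oplus r}$; via the further isomorphism $\mathcal{J}^N(\mathcal{O}_{U_x}^{\oplus r}/S) \cong q_x^*\mathcal{J}^N(\mathcal{O}_{V_x}^{\oplus r}/S)$ this exhibits $\Gamma(U_x, D)$ as the $q_x$-pullback of a classical differential operator $D_x$ on $V_x$ in the sense of Section 2.1. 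The final sentence of the proposition is then immediate from the canonical surjection $\mathcal{J}^N(\mathcal{E}/S) \twoheadrightarrow \Omega^{\leq N}(\mathcal{E}/S)$ from \cite{Guenther}: any $\mathcal{O}_X$-linear map out of $\Omega^{\leq N}(\mathcal{E}/S)$ composes with this quotient to give a map out of $\mathcal{J}^N(\mathcal{E}/S)$, and hence a classical operator by the first half.

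The principal obstacle lies in the descent step of the converse: a priori the local expression $\widetilde{D}_{U_x}$ has coefficients in $\mathcal{O}_{U_x}$ rather than in the subring $q_x^*\mathcal{O}_{V_x} \subseteq \mathcal{O}_{U_x}$, so producing the genuine $\mathcal{O}_{V_x}$-linear $\widetilde{D}_x$ requires the descent cocycle condition on $U_x \times_{V_x} U_x$ to hold, which is not automatic for an arbitrary choice of etale neighborhood $p_x: U_x \to X$. One handles this by exploiting the freedom in the choice of the etale cover, refining $U_x$ so that it arises from a standard etale coordinate chart provided by the Jacobian criterion for smoothness of $\pi: X \to S$; for such a chart the two pullbacks to $U_x \times_{V_x} U_x$ of $\widetilde{D}_{U_x}$ coincide and faithfully flat descent along $q_x$ delivers the required $\widetilde{D}_x$ on $V_x$.
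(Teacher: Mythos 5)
Your forward direction is essentially the paper's argument: transport each local classical operator through the etale invariance of jet modules (\prettyref{lem:L30111}), observe that an $\mathcal O_{U_x}$-linear factorization of $D|_{U_x}$ through $d^N_{\mathcal E/S}$ is unique (the jet module is generated, for the first $\mathcal O_X$-structure, by the image of $d^N_{\mathcal E/S}$, which is the uniqueness in \prettyref{lem:L45}), so the local maps agree on $U_x\times_XU_y$, and glue because $\mathcal J^N(\mathcal E/S)$ and $\mathcal E$ are etale sheaves. The one point you pass over is that each $D_x$ comes with its own order $N_x$; before gluing you must use quasicompactness of $X$ to pass to a finite etale subcover and replace all $N_{x_i}$ by $N=\max_i N_{x_i}$, as the paper does. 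That is a small, fixable omission.

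The genuine gap is your repair of the converse. The cocycle condition on $U_x\times_{V_x}U_x$ that you propose to verify is, by faithfully flat descent along $q_x$, \emph{equivalent} to the coefficients of $\widetilde D|_{U_x}$ (in coordinates pulled back from $V_x$) lying in the subring $q_x^{*}\mathcal O_{V_x}\subseteq\mathcal O_{U_x}$ --- which is exactly what has to be proved, and it is false for an arbitrary $\widetilde D$ on a chart chosen independently of $D$: on a standard etale chart $U=V[z]/(h(z))$ with $h'$ a unit, already the order-zero operator ``multiplication by $z$'' is not of the form $q^{*}g$. So ``refine to a standard chart, then the two pullbacks coincide'' is circular; if you insist on the literal conclusion that $\Gamma(U_x,D)$ is the pullback of a single operator $D_x$ on $V_x$, you must be allowed to re-choose $q_x$ (and shrink $U_x$) depending on $D$, and that needs a real argument, not descent. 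The paper's converse instead goes through the isomorphism $q_x^{*}DO^N(\mathcal O_{V_x}^{\oplus r}/S)\cong DO^N(\mathcal O_{U_x}^{\oplus r}/S)$, a formal consequence of \prettyref{lem:L30111}, i.e.\ it only uses that operators on $U_x$ are etale-locally $\mathcal O_{U_x}$-combinations of pulled-back classical operators. Separately, your last step invokes a ``canonical surjection $\mathcal J^N(\mathcal E/S)\twoheadrightarrow\Omega^{\leq N}(\mathcal E/S)$'', which is not available: with $\Omega^{\leq N}=\bigoplus_{n\leq N}\Omega^{(n)}$ as in the paper's conventions, already for $X=\mathbb A^1_k$, $\mathcal E=\mathcal O_X$, $N=2$ the source is free of rank $3$ while the target has rank $4$, so no surjection can exist. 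The paper derives this clause from the cited local description of operators in the $\Omega$-formalism on $\mathbb A^n$ in \cite{Guenther} together with the etale sheaf property, not from a comparison map of that kind; as written, your argument for both the converse and the final statement does not go through.
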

\begin{proof} Under these assumptions for each scheme point $x\in X$\, the classical operator $D_x: \mathcal O_{V_x}^{\oplus r}\longrightarrow \mathcal O_{V_x}^{\oplus r}$\, corresponds to a section 
$$\widetilde{D_x}'\in \Gamma(V_x, Hom_{V_x}(\mathcal J^{ N_x}(\oplus_{i=1}^r O_{V_x}/S), \oplus_{i=1}^r\mathcal O_{v_x})$$  for some $N_x\in \mathbb N$. Since the jet bundles are invariant under etale pull back (\prettyref{lem:L30111}) , for each $x\in X$\, we get a  the pulled back section 
$$\widetilde{D_x}\in \Gamma(U_x, Hom_{U_x}(\mathcal J^{N_x}(\mathcal E/S), \mathcal E)),$$
 using the trivialization of $\mathcal E$\, over $U_x$,  such that $\widetilde{D_x}$\, composed with $\Gamma(U_x,d^{ N_x}_{\mathcal E/S})$\, is $\Gamma(U_x,D)$\,. I claim that the $\widetilde{D_x}$\, glue to a global section of $DO^N_{X/S}(\mathcal E,\mathcal E)$\, over $X$ for some $N\in \mathbb N.$ First since $X$\, is quasicompact, we can find an etale  finite subcovering $\{U_{x_i}\longrightarrow  X,\quad i\in I\}$ with $I$\, a finite set. So we can take as our $N$\, the number $N=\max_{i\in I}N_{x_i}$. Now the global differential operators are a  subalgebra of the $\pi^{-1}(\mathcal O_S)$\, 
 linear endomorphism algebra of $\mathcal E$\,.  We know, that etale locally, the endomorphism $D$ is given by an $\mathcal O_X$-linear map $\widetilde{D_x}.$\,
  On etale overlaps $U_x\times_XU_y$, $D$ is certainly given by an element of $\Gamma(U_{xy}, DO^N(\mathcal E), \mathcal E)$\, 
 But since each element in $\Gamma(U_{xy}, DO^N(\mathcal E,\mathcal E))$\, determines uniquely an element in $Hom_{U_{xy}}(\mathcal J^{ N}(\mathcal E/S),\mathcal E)$\,
  the two elements obtained by restrictions from $U_x$\, and $U_y$\, to $U_{xy}=U_x\times_XU_y$\, must agree. Thus since $\mathcal J^{ N}(\mathcal E/S)$\, and $\mathcal E$\, are etale sheaves (since they are coherent on $X$),\, we get a global section in $DO^N(\mathcal E, \mathcal E)$\, over $X$.\\
 The converse of the statement follows from the fact, that if $\pi_x:U_x\longrightarrow V_y$\, is an etale morphism, then $$\pi_x^*DO^N(\mathcal O_{V_x}^{\oplus r}/S)\cong DO^N(\mathcal O_{U_x}^{\oplus r}/S),$$\, which is a simple consequence of the etale pull back property of the jet-modules (\prettyref{thm:T101}). Thus each differential operator on $U_x$\, is the pull back of a differential operator on $V_x$\,, so the local description of a globally defined differential operator on $\mathcal E/S$\, is always satisfied.\\
 The last statement follows from the local description of differential operators in the $\Omega$-formalizm on $\mathbb A^n_k$ (see \cite{Guenther}[chapter  6.4. Theorem 6.55,p.97, chapter 8, Corollary 8.11(2),p. 146], namely locally on $\mathbb A^n_k$\, they give classical partial linear differential operators, and the fact, that they form an etale sheaf .
 \end{proof}
 \begin{remark}\mylabel{rem:R22125} In the same situation, we can prove in the same way, that if $\mathcal E_1$\, and $\mathcal E_2$\, are locally free sheaves on $X$, then each linear partial differential operator between $\mathcal E_1$\, and $\mathcal E_2$\, has either a description via the jet bundle or the etale local description.
 \end{remark}
 \subsection{The global case}
 By the etale invariance property of the jet-module (and hence invariance under Zariski-localizations), if $q: X\longrightarrow S$\, is a morphism of finite type between noetherian schemes or noetherian algebraic spaces, if $\mathcal F$\, is a coherent sheaf on $X$, if we choose   affine Zariski-open covers of $X$ and $S$ , the locally defined jet-modules glue to a global jet-module $\mathcal J^N(\mathcal F/S)$\,. Under the assumptions made, this is a coherent sheaf on $X$. This follows from the fact, that the localization isomorphisms are canonical (follows from the universal representing properties of the jet-modules) and hence, the cocycle conditions are satisfied).\\
 If $X$ and $S$ are noetherian algebraic spaces, one defines the jet sheaf first in the case, where the morphism is representable, i.e. we can find an etale cover $\{\Spec A_i\longrightarrow S\}$\, such that $X\times_S\Spec A_i$\, is a scheme. Then, etale locally over $S$, the jet-modules are defined by the scheme case.\\
 If the morphism $q$ is not representable we can assume that $S=\Spec A$\, is a noetherian affine scheme. Then choose an etale cover $\{\Spec B_j\longrightarrow X\}$\, and the jet-modules $\mathcal J^N(M_j/A)$\,, where $\mathcal F\mid_{\Spec B_j}=\widetilde{M_j}$\, glue to a globally defined jet sheaf $\mathcal J^N(\mathcal F/S)$\,. All we need is the base change -and etale invariance property of the jet-modules. Also, the universal filtered derivations 
 $$d^N_{M_j/A_i}: M_j\longrightarrow \mathcal J^N(M_j/A_i), i\in I, j\in J$$ glue to a universal derivation 
 $$d^N_{\mathcal F/S}:  \mathcal F\longrightarrow \mathcal J^N(\mathcal F/S).$$
  Furthermore, for a fixed quasi coherent sheaf $\mathcal F$\,, the universal representing property of the pair $d^N_{\mathcal F/S},\mathcal J^N(\mathcal F/S)$\, for the moduli problem, sending a $\mathcal J^N(X/S)$-module $\mathcal Q$\, to the set of all  filtered derivations $t: \mathcal F\longrightarrow \mathcal Q$\, is satisfied, because the required homomorphism of $\mathcal J^N(X/S)$-modules $\phi: \mathcal J^N(\mathcal F/S)\longrightarrow \mathcal Q$\, can be constructed etale-or Zariski-locally, and by the universal property in the affine case, these locally construced $\phi_i$\, glue to a global $\phi$\,. If $\phi_1,\phi_2$\, are two homomorphisms of $\mathcal J^N(X/S)$-modules with $t=\phi_i\circ d^N_{\mathcal F/S}$\,, then they locally agree, hence by the sheaf property they agree globally.  We have proved the following
 \begin{theorem}\mylabel{thm:T20127} Let $q: X\longrightarrow S$\, be a morphism of finite type of noetherian schemes, or, more generally of neotherian algebraic spaces and let $\mathcal F$\, be a quasi coherent sheaf on $X$. Then,  for each $N\in \mathbb N_0\cup \{\mathbb N\},$\, there is a quasi coherent $\mathcal O_X$-module $\mathcal J^N(\mathcal F/S)$\, plus a filtered derivation
 $$d^N_{\mathcal F/S}: \mathcal F\longrightarrow \mathcal J^N(\mathcal F/S)$$
 with respect to the diagonal ideal sheaf $\mathcal I_{X/S}$\, that represents the functor, sending a 
 $\mathcal J^N(X/S)$-module $\mathcal Q$\, to the set of all filtered derivations $t: \mathcal F\longrightarrow \mathcal Q$\, with respect to the diagonal ideal sheaf $\mathcal I_{X/S}$\,.
 \end{theorem}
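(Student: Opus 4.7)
The plan is to reduce the global statement to the affine case (already handled by Lemma \ref{lem:L45}) and then glue, using the base-change and \'etale-invariance results established earlier in the paper to ensure the local constructions are compatible. I will first treat the scheme case and then bootstrap to algebraic spaces.

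First I would fix an affine Zariski cover $\{U_i = \Spec B_i\}$ of $X$ mapping into an affine Zariski cover $\{V_i = \Spec A_i\}$ of $S$, and write $\mathcal F|_{U_i} = \widetilde{M_i}$. On each $U_i$ the affine construction gives a $B_i$-module $\mathcal J^N(M_i/A_i)$ together with its universal derivation $d^N_{M_i/A_i}$. On the overlaps $U_i \cap U_j$, localization at the relevant elements of $B_i$ and $B_j$ produces isomorphic modules by \prettyref{cor:C19127} (invariance under Zariski localization); since those localization isomorphisms come from the universal property of the jet module (and hence are canonical), the cocycle condition on triple overlaps is automatic. This yields a quasi-coherent sheaf $\mathcal J^N(\mathcal F/S)$ on $X$ and, by the same naturality argument applied to the universal derivations $d^N_{M_i/A_i}$, a globally defined filtered derivation $d^N_{\mathcal F/S}: \mathcal F \to \mathcal J^N(\mathcal F/S)$.

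For the universal property, given any $\mathcal J^N(X/S)$-module $\mathcal Q$ and a filtered derivation $t:\mathcal F \to \mathcal Q$, I would apply \prettyref{lem:L45} over each $U_i$ to obtain a unique homomorphism $\phi_i: \mathcal J^N(\mathcal F/S)|_{U_i} \to \mathcal Q|_{U_i}$ with $t|_{U_i} = \phi_i \circ d^N_{\mathcal F/S}|_{U_i}$. The uniqueness in the affine case forces $\phi_i$ and $\phi_j$ to agree on $U_i \cap U_j$, so they glue to a global $\phi$; a second application of uniqueness, applied locally, shows that any two global $\phi$ satisfying $t = \phi \circ d^N_{\mathcal F/S}$ coincide on each $U_i$ and hence globally by the sheaf property.

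To extend to noetherian algebraic spaces, I would proceed as sketched in the text preceding the theorem: in the representable case, pick an \'etale cover $\{\Spec A_i \to S\}$ so that $X \times_S \Spec A_i$ is a scheme, apply the scheme case fibrewise, and glue via \prettyref{prop:P2913} (Base change I) which guarantees compatibility along \'etale base change on $S$. In the non-representable case I would reduce to $S = \Spec A$ affine and choose an \'etale cover $\{\Spec B_j \to X\}$; the crucial compatibility on overlaps $\Spec B_j \times_X \Spec B_{j'}$ is provided by \prettyref{lem:L30111} (\'etale invariance of jet modules), which ensures the locally defined $\mathcal J^N(M_j/A)$ glue, and again the universal property transports from the local to the global level by the same argument as in the scheme case. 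The main obstacle is really the verification that the gluing data satisfies the cocycle condition on triple overlaps in the algebraic-space setting, but since every isomorphism in sight is constructed via the universal property of the affine jet module, uniqueness reduces this to a formality.
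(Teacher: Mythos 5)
Your proposal follows essentially the same route as the paper: glue the affine jet modules over Zariski (resp.\ \'etale) covers using the canonicity of the localization/\'etale-invariance isomorphisms coming from the universal property, and verify the representing property by constructing $\phi$ locally via \prettyref{lem:L45} and gluing, with uniqueness following from the sheaf property. This matches the paper's argument, so no further comparison is needed.
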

 \begin{proof}
 \end{proof}
 We thus make the following  (basically standard) definition.
 \begin{definition}\mylabel{def:D46} Let $X\longrightarrow S$\, be an arbitrary morphism of finite type of  noetherian schemes, or more generally of  noetherian algebraic spaces and  $\mathcal F_i, i=1,2$\, be  quasi coherent sheaves on $X$. Then, a differential operator of order $\leq N$\, is an $\mathcal O_S$-linear map $D: \mathcal F_1\longrightarrow \mathcal F_2$\, that can be factored as
  $$\mathcal F_1\stackrel{d^N_{\mathcal F/S}}\longrightarrow \mathcal J^N(\mathcal F_1/S) \stackrel{\widetilde{D}}\longrightarrow \mathcal F_2,$$
  where the homomorphism $\widetilde{D}$\, is $\mathcal O_X$-linear, where $\mathcal J^N(X/S)$\, is regarded with respect to the $\mathcal O_X$-module structure coming from the first tensor factor.
  A differential operator of order $N$\, is a differential operator that is of order $\leq N$\, but not of order $\leq N-1$\,.
 \end{definition}
 Thus, in this situation, there is a 1-1 correspondence between differential operators $\mathcal F_1\longrightarrow \mathcal F_2$\, relative to $S$ and $\mathcal O_X$-linear maps $\mathcal J^N(\mathcal F_1/S)\longrightarrow \mathcal F_2$\,.
\begin{proposition}\mylabel{prop:P2} (arbitrary push-forwards) Let $X\stackrel{f}\longrightarrow Y\stackrel{p}\longrightarrow S$\, be morphisms of schemes and $\mathcal F_i, i=1,2$\, be  quasi coherent sheaves on $X$. Let $D: \mathcal F_1\longrightarrow \mathcal F_2$\, be a differential operator relative to $S$. Then $f_*F_1\stackrel{f_*D}\longrightarrow f_*F_2$\, is a differential operator between the quasi coherent sheaves $f_*F_i$\, relative to $S$, where $f_*D$\, is taken in the category of sheaves of $(\pi\circ f)^{-1}\mathcal O_S$-modules on $X$.
\end{proposition}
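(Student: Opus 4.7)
The plan is to apply the universal property of the jet module (\prettyref{thm:T20127}) to the pushforward of the universal filtered derivation. Concretely, I will realize $f_*D$ as a composition of the form $(f_*\widetilde{D})\circ\phi\circ d^N_{f_*\mathcal F_1/S}$, where $\phi$ is a canonical comparison map from $\mathcal J^N(f_*\mathcal F_1/S)$ into $f_*\mathcal J^N(\mathcal F_1/S)$ obtained from the universal property.

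First I would construct a canonical homomorphism of sheaves of rings $\mathcal J^N(\mathcal O_Y/S)\longrightarrow f_*\mathcal J^N(\mathcal O_X/S)$. The structure map $f^{\#}\colon\mathcal O_Y\to f_*\mathcal O_X$ induces a map $\mathcal O_Y\otimes_{\mathcal O_S}\mathcal O_Y\to f_*(\mathcal O_X\otimes_{\mathcal O_S}\mathcal O_X)$ which is compatible with the two multiplication maps, and hence sends the diagonal ideal $\mathcal I_{Y/S}$ into $f_*\mathcal I_{X/S}$; passing to the quotient by the $(N+1)$-st powers yields the required ring map. Via this homomorphism, $f_*\mathcal J^N(\mathcal F_1/S)$ becomes a $\mathcal J^N(\mathcal O_Y/S)$-module.

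Next I would verify that $f_*d^N_{\mathcal F_1/S}\colon f_*\mathcal F_1\longrightarrow f_*\mathcal J^N(\mathcal F_1/S)$ is a filtered derivation in the sense of \prettyref{def:D1712}. For local sections $b$ of $\mathcal O_Y$ and $m$ of $f_*\mathcal F_1$, the difference $f_*d^N(bm)-b\cdot f_*d^N(m)$ is the same as $d^N(f^{\#}(b)\cdot m)-f^{\#}(b)\cdot d^N(m)$, which lies in $\mathcal I_{X/S}\cdot \mathcal J^N(\mathcal F_1/S)$ because $d^N_{\mathcal F_1/S}$ is a filtered derivation, and therefore lies in $\mathcal I_{Y/S}\cdot f_*\mathcal J^N(\mathcal F_1/S)$ by construction of the preceding ring map.

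\prettyref{thm:T20127} then furnishes a unique $\mathcal J^N(\mathcal O_Y/S)$-linear homomorphism $\phi\colon\mathcal J^N(f_*\mathcal F_1/S)\longrightarrow f_*\mathcal J^N(\mathcal F_1/S)$ with $\phi\circ d^N_{f_*\mathcal F_1/S}=f_*d^N_{\mathcal F_1/S}$. Postcomposing with the $\mathcal O_Y$-linear map $f_*\widetilde{D}$ gives the factorization $f_*D=(f_*\widetilde{D})\circ\phi\circ d^N_{f_*\mathcal F_1/S}$ demanded by \prettyref{def:D46}. The main obstacle is really one of bookkeeping: one must check that the $\mathcal O_Y$-module structure on $f_*\mathcal J^N(\mathcal F_1/S)$ used to check the filtered-derivation property (the one induced by the second tensor factor, pulled back along $f^{\#}$) coincides with the structure required by the universal property of $\mathcal J^N(f_*\mathcal F_1/S)$; both are induced by the same chain of maps $\mathcal O_Y\to f_*\mathcal O_X\to f_*\mathcal J^N(\mathcal O_X/S)$ and therefore agree.
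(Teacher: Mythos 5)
Your proposal is correct and follows essentially the same route as the paper: both construct the comparison map $\mathcal J^N(Y/S)\longrightarrow f_*\mathcal J^N(X/S)$ (the paper via the induced morphism $J^N(f/S)$ of jet schemes, you directly at the level of the sheaf algebras), then invoke the universal property of the jet module to produce $\phi:\mathcal J^N(f_*\mathcal F_1/S)\longrightarrow f_*\mathcal J^N(\mathcal F_1/S)$ with $\phi\circ d^N_{f_*\mathcal F_1/S}=f_*d^N_{\mathcal F_1/S}$, and factor $f_*D=(f_*\widetilde{D})\circ\phi\circ d^N_{f_*\mathcal F_1/S}$. Your extra bookkeeping on the filtered-derivation property and the compatibility of the two $\mathcal O_Y$-module structures only makes explicit what the paper leaves implicit.
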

\begin{proof} Let $D$ be given by 
$$\widetilde{D}\circ d^N_{\mathcal F_1/S}: \mathcal F_1\longrightarrow \mathcal J^N(\mathcal F_1/S)\longrightarrow \mathcal F_2,$$ where the first map is $(\pi\circ f)^{-1}\mathcal O_S$-linear and $\widetilde{D}$\, is $\mathcal O_X$-linear. Then $f_*d^N_{\mathcal F_1/S}$\, is an $\pi^{-1}\mathcal O_S$-linear map from $f_*\mathcal F_1$\, to $f_*\mathcal J^N(\mathcal F_1/S)$\, and $f_*\widetilde{D}$\, is $f_*\mathcal O_X$, and thus $\mathcal O_Y$\,
 linear via the structure homomorphism $\mathcal O_Y\longrightarrow f_*\mathcal O_X$\,.\\
  The morphism $f$ induces a morphism 
  $$J^N(f/S): J^N(X/S)\longrightarrow J^N(Y/S),$$
   where $J^N(X/S):=\Spec_X\mathcal J^N(X/S)$\, with  projection $p_{X}:J^N(X/S)\longrightarrow X$\,,  such that
   $$ p_{Y}\circ J^N(f/S)=f\circ p_X.$$ 
   and thus we have a homomorphism of sheaves $\mathcal J^N(Y/S)\longrightarrow f_*\mathcal J^N(X/S)$\,.\\
    Hence we have  that $f_*\mathcal J^N(\mathcal F/S)$\, is an $f_*\mathcal J^N(X/S)$-module and thus an $\mathcal J^N(Y/S)$-module.\\
 By \prettyref{lem:L45}, there is a unique homomorphism 
 $$\phi: \mathcal J^N(f_*\mathcal F_1/S)\longrightarrow f_*\mathcal J^N(\mathcal F_1/S)$$ such that $f_*d^N_{\mathcal F_1/S}=\phi\circ d^N_{f_*\mathcal F_1/S}$\,.   The $\pi^{-1}\mathcal O_S$-linear map $f_*D$\, can be written as
 $$f_*D: f_*\mathcal F_1\stackrel{d^N_{f_*\mathcal F_1/S}}\longrightarrow \mathcal J^N(f_*\mathcal F_1/S)\stackrel{(f_*\widetilde{D}\circ \phi)}\longrightarrow f_*\mathcal F_2$$
 and  is a partial linear differential operator on $Y$ over $S$.
\end{proof} 

\begin{remark}\mylabel{rem:R30111} If $q: X\longrightarrow S$\, is a morphism of noetherian schemes, $D: \mathcal E_1\longrightarrow \mathcal E_2$\, is a differential operator relative to $S$ and $\mathcal F$\, is a quasi coherent $\mathcal O_S$-module, it follows from  the global version of \prettyref{lem:L295} that $D\otimes_{q^{-1}\mathcal O_S}\text{Id}_{\mathcal F}$\, is a differential operator on $X$ relative to $S$.
\end{remark}
\begin{proposition}\mylabel{prop:P3} (etale pull back) Let $X\stackrel{f}\longrightarrow Y\stackrel{\pi}\longrightarrow S$\, be morphisms of schemes where $f$ is etale. If $D:\mathcal F\longrightarrow \mathcal F$\, is a differential operator on the quasi coherent $\mathcal O_Y$-module $\mathcal F$\,, then $f^*D: f^*\mathcal F\longrightarrow f^*\mathcal F$\, is a differential operator on the quasi coherent $\mathcal O_X$-module $f^*\mathcal F$\,.
\end{proposition}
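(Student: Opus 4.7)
The plan is to unpack the definition of a differential operator (\prettyref{def:D46}) as a factorization through the jet sheaf, apply $f^{*}$, and then invoke the etale invariance of jet modules (\prettyref{lem:L30111}) to reinterpret the pulled back factorization as a factorization through the jet sheaf of the pulled back module.

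In detail, I would write $D = \widetilde{D} \circ d^{N}_{\mathcal{F}/S}$ with $\widetilde{D}\colon \mathcal{J}^{N}(\mathcal{F}/S) \to \mathcal{F}$ being $\mathcal{O}_{Y}$-linear for the $p_{1}$-structure, and $d^{N}_{\mathcal{F}/S}$ the universal filtered derivation. Applying $f^{*}$ (as a functor of $\mathcal{O}_{Y}$-modules, using the $p_{1}$-structure on the jet sheaf) produces $f^{*}D = f^{*}\widetilde{D} \circ f^{*}d^{N}_{\mathcal{F}/S}$ with $f^{*}\widetilde{D}$ being $\mathcal{O}_{X}$-linear. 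By \prettyref{lem:L30111} there is a canonical isomorphism $\alpha\colon \mathcal{J}^{N}(f^{*}\mathcal{F}/S) \stackrel{\cong}{\longrightarrow} f^{*}\mathcal{J}^{N}(\mathcal{F}/S)$. Since $\alpha$ is by construction the map obtained by applying the universal property of $\mathcal{J}^{N}(f^{*}\mathcal{F}/S)$ to the filtered derivation $d^{N}_{\mathcal{F}/S} \otimes \operatorname{id}$, one has the compatibility $\alpha \circ d^{N}_{f^{*}\mathcal{F}/S} = f^{*}d^{N}_{\mathcal{F}/S}$. Setting $\widetilde{f^{*}D} := f^{*}\widetilde{D} \circ \alpha$, which is $\mathcal{O}_{X}$-linear, I conclude that $f^{*}D = \widetilde{f^{*}D} \circ d^{N}_{f^{*}\mathcal{F}/S}$, so $f^{*}D$ is a differential operator of order $\leq N$ in the sense of \prettyref{def:D46}.

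The real content of the statement is in \prettyref{lem:L30111}, so what remains here is essentially formal. The only subtlety — and the main bookkeeping obstacle — is that $\mathcal{J}^{N}(\mathcal{F}/S)$ carries two $\mathcal{O}_{Y}$-module structures; one must take the pullback $f^{*}\widetilde{D}$ with respect to the $p_{1}$-structure (the one used by \prettyref{def:D46} and by the etale invariance isomorphism), while remembering that $d^{N}_{\mathcal{F}/S}$ is only $\pi^{-1}\mathcal{O}_{S}$-linear and interacts with the $p_{2}$-structure. If one prefers an explicit affine check, localize to $\Spec A \subset Y$ and $\Spec B \subset X$ with $A \to B$ etale, write the pullback as $-\otimes_{A,p_{1}} B$, and verify the two factorizations coincide; canonicity of $\alpha$ guarantees that these local verifications glue to the global statement.
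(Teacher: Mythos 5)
Your proposal is correct and follows essentially the same route as the paper: write $D=\widetilde{D}\circ d^N_{\mathcal F/S}$, apply $f^*$, and use the etale invariance isomorphism $\mathcal J^N(f^*\mathcal F/S)\cong f^*\mathcal J^N(\mathcal F/S)$ of \prettyref{lem:L30111} together with the compatibility of universal derivations to exhibit $f^*D$ as $\mathcal O_X$-linear composed with $d^N_{f^*\mathcal F/S}$. Your extra remarks on the two module structures and the identification $\alpha\circ d^N_{f^*\mathcal F/S}=f^*d^N_{\mathcal F/S}$ only make explicit what the paper asserts in passing.
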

\begin{proof} This follows from the etale invariance property of the jet modules, i.e. $f^*\mathcal J_Y^N(\mathcal F/S)\cong \mathcal J^N_X(f^*\mathcal F/S)$\, (\prettyref{lem:L30111}.) Then, if $D$ is given as $D=\widetilde{D}\circ d^N_{\mathcal F/S}$\,,  the homomorphism of sheaves of $(\pi^{-1}\circ f)(\mathcal O_S)$-modules $f^*D$, is given by 
$$f^*D: f^*\mathcal F\stackrel{f^*d^N_{\mathcal F/S}=d^N_{f^*\mathcal F/S}}\longrightarrow f^*\mathcal J^N(\mathcal F/S)\cong \mathcal J^N(f^*\mathcal F/S)\stackrel{f^*\widetilde{D}}\longrightarrow f^*\mathcal F.$$
\end{proof}

To avoid confusion, for each $N\in \mathbb N_0,$ the $\mathcal O_S$-module  $\mathcal J^N(X/S)$\, can be regarded as the structure sheaf of the higher tangent bundle $J^N(X/S)$\,. This is an $\mathcal O_X$-bi-module with respect to the two tensor factors, sloppily written as $\mathcal J^N(X/S)=\mathcal O_X\otimes_{\mathcal O_S}\mathcal O_X/\mathcal I_{X/S}^{N+1}$\,. Denote by $p_{1,X},p_{2,X}$\, the two projections $J^N(X/S)\longrightarrow X$\,, where we defined in the introduction $p_{1,X}=p_X$\,. If the scheme $X$ under consideration is clear from the context, we drop the subscript $(-)_X$\,.\\
 To be more precise, there are two $\mathcal O_S$-linear homomorphisms 
 $$\mathcal O_X\stackrel{p_1^{\sharp},p_2^{\sharp}}\longrightarrow \mathcal J^{\mathbb N}(X/S).$$ If locally $\Spec A\subset S$ and $\Spec B\subset X$\, are open affine subsets, $\Spec B$ mapping to $\Spec A$\,, then 
 $$\Gamma(\Spec A, \mathcal J^{ N}(X/S))=B\otimes_AB/I_{B/A}^{N+1}$$ The two maps $p_1^{\sharp}, p_2^{\sharp}$\, correspond to the natural maps 
$$B\longrightarrow B\otimes_AB\longrightarrow B\otimes_AB/I_{B/A}^{N+1},\quad b\mapsto b\otimes 1, 1\otimes b.$$ Both homomorphisms give $\mathcal J^{ N}(X/S)$\, the structure of a quasi coherent $\mathcal O_X$-algebra We  have defined in the section Notation and Conventions  
$$J^{N}(X/S)=\Spec_{X}p_{1,,X*}\mathcal J^{ N}(X/S)$$ with natural projection
$p_{1,X}=p_X: J^{ N}(X/S)\longrightarrow X$\, which is a morphism of schemes over $S$. There is a second morphism over $S$, $p_{2,X}: J^{ N}(X/S)\longrightarrow X$\, whose structure homomorphism 
$$\mathcal O_X\longrightarrow p_{2,X,*}\mathcal J^N(X/S)$$  corresponds to the universal filtered derivation. This holds for all $N\in \mathbb N_0\cup\{\mathbb N\}.$\\
This we want to make clear by the following
\begin{lemma}\mylabel{lem:L46} Let $q:X\longrightarrow S$\, be a morphism of schemes and $\mathcal Q$\, be a $\mathcal J^N(X/S)$\,-module for some $N\in \mathbb N_0$. Then $p_{1,*}\mathcal Q\cong p_{2,*}\mathcal Q$\, as $q^{-1}\mathcal O_S$-modules.\\
In particular, 
$$p_{X,1,*}\mathcal O_{J^N(X/S)}=p_{X,2,*}\mathcal O_{J^N(X/S)}=\mathcal J^N(X/S).$$
\end{lemma}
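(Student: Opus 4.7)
The plan is to exploit the fact that the scheme $J^N(X/S)$ has underlying topological space equal to that of $X$, since by construction it is the $N$-th infinitesimal neighborhood of the diagonal $X\hookrightarrow X\times_S X$, which is a closed immersion. Under this identification, both structure morphisms $p_1,p_2:J^N(X/S)\longrightarrow X$ induce the identity on the underlying continuous map. Consequently, for any sheaf $\mathcal Q$ on $J^N(X/S)$, the underlying sheaves of abelian groups of $p_{1,*}\mathcal Q$ and $p_{2,*}\mathcal Q$ on $X$ are canonically identified — both are simply $\mathcal Q$ transported through the homeomorphism $|J^N(X/S)|\cong|X|$.

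The second step is to track the $q^{-1}\mathcal O_S$-module structure and show that the two identifications produce the same such structure. The key observation is that both projections are morphisms over $S$, i.e.\ $q\circ p_1=q\circ p_2=:q_N$ is the unique structural morphism $J^N(X/S)\longrightarrow S$. The $q^{-1}\mathcal O_S$-action on each $p_{i,*}\mathcal Q$ is obtained by composing $q^{-1}\mathcal O_S\longrightarrow (p_{i})_*q_N^{-1}\mathcal O_S$ with the structural action of $q_N^{-1}\mathcal O_S$ on $\mathcal Q$; since both composites $q\circ p_i$ are the same map $q_N$, the two resulting actions coincide.

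For the "In particular" part I would take $\mathcal Q=\mathcal O_{J^N(X/S)}$ and use that $J^N(X/S)=\Spec_X\mathcal J^N(X/S)$ was \emph{defined} via $p_{1,X}$, so that $p_{1,X,*}\mathcal O_{J^N(X/S)}=\mathcal J^N(X/S)$ tautologically. The general result then yields $p_{2,X,*}\mathcal O_{J^N(X/S)}=\mathcal J^N(X/S)$ as $q^{-1}\mathcal O_S$-modules (the two $\mathcal O_X$-algebra structures of course still differ — that is exactly the distinction between $p_1^\sharp$ and $p_2^\sharp$ emphasized in the Notation section and used to define the universal derivation).

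I do not anticipate any real obstacle: once one grants that $J^N(X/S)$ and $X$ share the same underlying topological space (which is immediate from the fact that $I_{X/S}^{N+1}$ is contained in the nilradical of $\mathcal O_{X\times_S X}$ along the diagonal), the lemma is purely a matter of unwinding the definitions of pushforward and of the $q^{-1}\mathcal O_S$-action. The only place where care is needed is to avoid claiming an isomorphism as $\mathcal O_X$-modules, which would be false — the statement is deliberately restricted to $q^{-1}\mathcal O_S$-modules precisely because the two $\mathcal O_X$-module structures genuinely differ.
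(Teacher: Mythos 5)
Your argument is correct and is essentially the paper's own: the paper likewise observes that $p_1$ and $p_2$ are affine and topological isomorphisms onto $X$, that locally $p_1^{-1}(\Spec B)=p_2^{-1}(\Spec B)=\Spec(B\otimes_AB/I_{B/A}^{N+1})$, and then concludes directly from the definition of pushforward. Your extra care in checking that the two $q^{-1}\mathcal O_S$-actions agree (via $q\circ p_1=q\circ p_2$) and your warning that the two $\mathcal O_X$-structures differ are correct refinements of the same proof, not a different route.
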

\begin{proof} The morphisms $p_1$\, and $p_2$\, are affine and finite and on the underlying scheme points a topological isomorphism $\mid J^N(X/S)\mid \cong \mid X\mid$\,.  Let $\Spec B\subset X$\, and $\Spec A\subset S$\, be open affine subschemes with $\Spec B$\, mapping to $\Spec A$\,. Then 
$$p_1^{-1}(\Spec B)=p_2^{-1}(\Spec B)=\Spec(B\otimes_AB/I_{B/A}^{N+1}).$$ Then by definition of push forward of a sheaf, the claim follows.
\end{proof}
\begin{remark}\mylabel{rem:R47} If $\mathcal Q$\, is an $\mathcal J^N(X/S)$-module, the sheaf $p_{1,*}\mathcal Q=p_{2,*}\mathcal Q$\, simply regarded as a sheaf of $q^{-1}\mathcal O_S$-modules  on $X$, possesses two $\mathcal O_X$-module structures. Restricting to an open affine $\Spec B\subset X$\,, if $\mathcal Q$\, corresponds to the $B\otimes_AB/I_{B/A}^{N+1}$-module $M$ , this is simply the
 $A$-module $M$, and the two $\mathcal O_X$-module structures on $M$ correspond to the two $B$-algebra structures on $\mathcal J^N(B/A)$\,.
 \end{remark}
 \begin{lemma}\mylabel{lem:L10} With notation as above, suppose that $\supp(\mathcal F)=Y\subsetneq X$\, with $\mathcal I_Y=\text{ann}(\mathcal F)$\,. Then, if there is a differential operator $\mathcal F\stackrel{D_Y}\longrightarrow \mathcal F$\, on $Y$, i.e., $\mathcal F$\, regarded as a sheaf on $Y$, then too on $\mathcal F$\, regarded as a sheaf on $X$.
\end{lemma}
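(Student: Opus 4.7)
The plan is to recognize this as a direct corollary of the push-forward result \prettyref{prop:P2} applied to the closed immersion $i: Y \hookrightarrow X$ defined by the ideal sheaf $\mathcal I_Y$. First I would observe that since $\mathcal I_Y = \operatorname{ann}(\mathcal F)$, the quasi-coherent sheaf $\mathcal F$ is annihilated by $\mathcal I_Y$ and hence acquires a canonical $\mathcal O_Y = \mathcal O_X/\mathcal I_Y$-module structure. Let $\mathcal F_Y$ denote the resulting quasi-coherent sheaf on $Y$; by construction $i_* \mathcal F_Y = \mathcal F$ as sheaves of $\mathcal O_X$-modules (and indeed as sheaves of abelian groups on $X$, since $i$ is a closed immersion). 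By hypothesis, $D_Y: \mathcal F_Y \to \mathcal F_Y$ is a differential operator of some finite order $N$ relative to $S$, i.e.\ in the sense of \prettyref{def:D46} there exists an $\mathcal O_Y$-linear map $\widetilde{D_Y}: \mathcal J^N(\mathcal F_Y/S) \to \mathcal F_Y$ with $D_Y = \widetilde{D_Y}\circ d^N_{\mathcal F_Y/S}$.

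The main step is then to invoke \prettyref{prop:P2} with the composition $Y \xrightarrow{i} X \xrightarrow{q} S$, i.e.\ with $f = i$. That proposition produces from $D_Y$ a differential operator $i_* D_Y: i_* \mathcal F_Y \to i_* \mathcal F_Y$ on $X$ relative to $S$, and via the identification $i_* \mathcal F_Y = \mathcal F$ this is the original map $D_Y$ on underlying sheaves. Explicitly, the construction in the proof of \prettyref{prop:P2} uses the induced morphism $J^N(i/S): J^N(Y/S) \to J^N(X/S)$ together with the universal property \prettyref{lem:L45} to supply an $\mathcal O_X$-linear factorization $\widetilde D: \mathcal J^N(\mathcal F/S) \to \mathcal F$ with $D_Y = \widetilde D \circ d^N_{\mathcal F/S}$, which is exactly what \prettyref{def:D46} demands.

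There is essentially no obstacle here beyond unwinding the definitions: the only thing to check is that the identification $i_* \mathcal F_Y = \mathcal F$ is compatible with the $\mathcal O_S$-module structures on both sides, which is immediate since the structure morphism $Y \to S$ factors through $X$. In short, the lemma is a reformulation of \prettyref{prop:P2} in the special case of the closed immersion $Y = \operatorname{supp}(\mathcal F) \hookrightarrow X$.
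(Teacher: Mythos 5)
Your argument is correct, and it is a genuinely different route from the one the paper takes. You reduce \prettyref{lem:L10} to the push-forward result \prettyref{prop:P2}: since $\mathcal I_Y=\operatorname{ann}(\mathcal F)$, one has $\mathcal F=i_*\mathcal F_Y$ for the closed immersion $i\colon Y\hookrightarrow X$, and applying \prettyref{prop:P2} to $Y\stackrel{i}\longrightarrow X\longrightarrow S$ turns $D_Y$ into a differential operator on $X$ of the same order which on underlying sheaves is $D_Y$; this is legitimate and non-circular, as \prettyref{prop:P2} precedes \prettyref{lem:L10} and is proved independently of it (via $J^N(i/S)$ and the universal property \prettyref{lem:L45}), and the closed immersion is in particular affine, so quasi-coherence of the push-forward is harmless. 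The paper instead argues directly and locally: it constructs from the local description of the jet modules the $\mathcal O_X$-linear (first-structure) surjection $p_{XY}\colon \mathcal J^N_X(\mathcal F/S)\twoheadrightarrow \mathcal J^N_Y(\mathcal F/S)$, locally $\overline{A\otimes_k M}\twoheadrightarrow \overline{(A/I_Y)\otimes_k M}$, compatible with the universal derivations, and sets $\widetilde{D_X}=\widetilde{D_Y}\circ p_{XY}$. Unwinding \prettyref{prop:P2} in your special case, the comparison map $\phi\colon \mathcal J^N(i_*\mathcal F_Y/S)\longrightarrow i_*\mathcal J^N_Y(\mathcal F_Y/S)$ supplied by \prettyref{lem:L45} is exactly this $p_{XY}$, so the two proofs yield the same factorization. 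What your route buys is brevity and freedom from local computation; what the paper's route buys is the explicit exhibition of the comparison map together with its surjectivity, which is the feature actually exploited later (e.g.\ the surjection of $DO^N$-modules invoked in case (1) of the proof of \prettyref{thm:T101}), whereas the bare existence statement you derive would not by itself record that extra information.
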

\begin{proof} By assumption, there is some $N\in \mathbb N$\, plus an $\mathcal O_Y$-linear map $\widetilde{D_Y}:\mathcal J^N_Y(\mathcal F/k)\longrightarrow \mathcal F$\,. By   looking at the local description of the jet-modules, there is always an $\mathcal O_X$-linear surjection $p_{XY}:\mathcal J^N_X(\mathcal F/k)\longrightarrow \mathcal J^N_Y(\mathcal F/k)$\, (which is locally of the form 
$$(\overline{A\otimes_k M}\twoheadrightarrow \overline{(A/I_Y)\otimes_kM)}$$
which is $(A,p_1)$-linear.  Composing with $p_{XY}$\,, we get $\widetilde{D_X}=\widetilde{D_Y}\circ p_{XY}: \mathcal J_X^N(\mathcal F/k)\longrightarrow \mathcal F$\,, that, composed with $d^N_{X, \mathcal F/k}$, gives the differential operator over $X$, $D_X: \mathcal F\longrightarrow \mathcal F$\,.
\end{proof}
In order to study the behavior of a differential operator with respect to the natural torsion filtration on a coherent sheaf, we prove the following 
\begin{lemma}\mylabel{lem:L12} Let $\pi: X\longrightarrow S$\, be a   morphism of algebraic schemes and $\mathcal F$\, be a quasi coherent sheaf on $X$ and 
$\mathcal F'\subset \mathcal F$\, be a coherent subsheaf. For each $N\in \mathbb N,$\, let $\mathcal J^N(\mathcal F'/S)'$\, be the subsheaf of $\mathcal J^N(\mathcal F/S)$\, which is the image under the natural homomorphism $\mathcal J^N(F'/S)\stackrel{\mathcal J^N(i/S)}\longrightarrow \mathcal J^N(F/S)$, where $i: \mathcal F'\hookrightarrow \mathcal F$ is the inclusion. We have  on $X$ 
$$\ann(\mathcal J^N(\mathcal F'/S)')\supseteq \text{ann}(\mathcal F')^{N+1}\otimes_{\mathcal O_S}\mathcal O_X,$$ where we regard $\mathcal J^N(\mathcal F'/S)$\, as a coherent sheaf on $J^N(X/S)$. Thus, if $\dim(\mathcal F')\leq d$, then also 
$\dim(\mathcal J^N(\mathcal F'/S)')\,\leq d$.
\end{lemma}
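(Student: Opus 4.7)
My plan is to reduce to the affine/local situation and then run a binomial-expansion argument that exchanges the two module structures on the jet algebra.

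First I would localize: fix open affines $\Spec A\subset S$ and $\Spec B\subset X$ with $\Spec B$ over $\Spec A$, and write $\mathcal F|_{\Spec B}=\widetilde M$, $\mathcal F'|_{\Spec B}=\widetilde{M'}$, $M'\subset M$. Put $J=\ann_B(M')$. Recall that $\mathcal J^N(M'/A)=M'\otimes_{B,p_2}\mathcal J^N(B/A)$, so the $p_2$-action of $a\in J$ on $\mathcal J^N(M'/A)$ is trivial (it is just multiplication on the tensor factor $M'$). The image $\mathcal J^N(\mathcal F'/S)'$ is a quotient of $\mathcal J^N(M'/A)$ on which the $p_2$-action of $J$ is therefore also zero.

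The key step is then the identity $p_1(a)=p_2(a)+d^1 a$ in $\mathcal J^N(B/A)$. Given $a_1,\dots,a_{N+1}\in J$, I would expand
\[
p_1(a_1)\cdots p_1(a_{N+1})=\prod_{i=1}^{N+1}\bigl(p_2(a_i)+d^1 a_i\bigr)
\]
into $2^{N+1}$ monomials. Every monomial in which at least one factor is $p_2(a_i)$ annihilates $\mathcal J^N(M'/A)$ (and hence its image in $\mathcal J^N(M/A)$) by the observation above; the single remaining monomial $(d^1a_1)\cdots(d^1 a_{N+1})$ lies in $I_{B/A}^{N+1}$, which is zero in $\mathcal J^N(B/A)$. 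Hence $p_1(a_1\cdots a_{N+1})=p_1(a_1)\cdots p_1(a_{N+1})$ annihilates $\mathcal J^N(\mathcal F'/S)'$. Since $J^{N+1}$ is generated by such $(N+1)$-fold products, this shows $p_1(J^{N+1})\subseteq\ann\bigl(\mathcal J^N(\mathcal F'/S)'\bigr)$, which is exactly the containment $\ann(\mathcal F')^{N+1}\otimes_{\mathcal O_S}\mathcal O_X\subseteq\ann\bigl(\mathcal J^N(\mathcal F'/S)'\bigr)$ after globalizing (the construction glues because the universal property in \prettyref{lem:L45} is compatible with localization, by \prettyref{cor:C19127}).

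For the dimension statement, note that the support of $\mathcal J^N(\mathcal F'/S)'$ as a coherent sheaf on $J^N(X/S)$ is contained in the vanishing locus of its annihilator, hence in $V\!\bigl(p_1^{\sharp}(\ann(\mathcal F')^{N+1})\bigr)=p_1^{-1}\bigl(V(\ann(\mathcal F'))\bigr)=p_1^{-1}(\supp\mathcal F')$. By \prettyref{lem:L46} the projection $p_1\colon J^N(X/S)\to X$ is affine, finite, and a homeomorphism on underlying spaces, so $\dim p_1^{-1}(\supp\mathcal F')=\dim\supp\mathcal F'=\dim\mathcal F'\le d$, whence $\dim\mathcal J^N(\mathcal F'/S)'\le d$.

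The main obstacle is purely bookkeeping — keeping the two $\mathcal O_X$-structures on $\mathcal J^N$ distinct throughout the binomial expansion. Once the identity $p_1(a)=p_2(a)+d^1 a$ is used correctly, the rest is formal.
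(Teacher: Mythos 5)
Your proposal is correct and takes essentially the same route as the paper's own proof: localize, observe that $p_2(\ann(M'))$ acts trivially while the difference between the two actions of an element of $\ann(M')$ lies in $I_{B/A}$, expand an $(N+1)$-fold product so that the only surviving term lies in $I_{B/A}^{N+1}=0$, pass to the image $\mathcal J^N(\mathcal F'/S)'$, and deduce the dimension bound from $p_1$ being finite and a homeomorphism on underlying spaces (the paper phrases the expansion with a single element, $(a\otimes 1-1\otimes a)^{N+1}$, while you handle general products $a_1\cdots a_{N+1}$, which is a slightly more complete bookkeeping of the same idea). Only a cosmetic remark: with the paper's convention $d^1a=\overline{1\otimes a}-\overline{a\otimes 1}$ one has $p_2(a)=p_1(a)+d^1a$, so your identity $p_1(a)=p_2(a)+d^1a$ has the opposite sign, which does not affect the argument.
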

\begin{proof} The question is local, so let $A\longrightarrow B$\, be a  homomorphism of finitely generated $k$-algebras and let $M$ be an $A$- module. I claim that $\text{ann}(M)^{N+1}\otimes_AB\subseteq \text{ann}(\mathcal J^N(M/A))$\,. We have 
$$\mathcal J^N(M/A)=B\otimes_AM/I_{B/A}^{N+1}\cdot (B\otimes_AM).$$ Let $\mathfrak{a}=\text{ann}(M)$\, and $a\in \mathfrak{a}$\,. By definition, we know, that $\mathcal J^N(M/A)$\, is annihilated by $B\otimes \mathfrak{a}$\,. We have 
$a\otimes 1-1\otimes a\in I_{B/A}$\, and  for all $m\in M$,

$$0=(a\otimes 1-1\otimes a)^{N+1}\cdot 1\otimes m=(a^{N+1}\otimes 1+ (1\otimes a)\cdot \omega)\cdot 1\otimes m$$
 and it follows $(a\otimes 1)^{N+1}\cdot 1\otimes m =0\,\,\forall m\in M \,\,\text{and}\, a\in \mathfrak{a}$\,.
 Thus 
 $$\text{ann}(M)^{N+1}\otimes_AB\subseteq \ann(\mathcal J^N(M/A)).$$ If now $M'\subset M$\, corresponds over $\Spec B$\, to $\mathcal F'\subset \mathcal F$\, and $\mathfrak{a}'=\text{ann}(M)'$§\, then $\ann(\mathcal J^N(M'/A)\supseteq \mathfrak{a'}^{N+1}\otimes_AB$\,  and so the image $\mathcal J^N(F'/S)'\subseteq \mathcal J^N(\mathcal F/S)$\, is also (locally over $X$\, annihilated by $\mathfrak{a'}^{N+1}\otimes_AB$\,. 
 Now the statement about the dimension follows from the fact that  if $\dim(\mathcal O_X/\ann(\mathcal F'))\leq d$\,, then also 
 \begin{gather*}d \geq  \dim(\mathcal F')=\dim(\mathcal O_X/\ann(\mathcal F')^{N+1})\geq \\
 \dim(\mathcal O_{J^N(X/S)}/\ann(\mathcal J^N(\mathcal F'/S))')=\dim(\mathcal J^N(F'/S)').
 \end{gather*}
\end{proof}
We have the following important
\begin{corollary}\mylabel{cor:C11} Let $\pi:X\longrightarrow S$\, be a morphism of algebraic schemes and $\mathcal E$\, be a coherent $\mathcal O_X$-module. Let $T^i\mathcal E, i=0,...,\dim(\mathcal E)$\, be the torsion filtration of $\mathcal E$ and for some $N\in \mathbb N$\, and $D: \mathcal E\longrightarrow \mathcal E$\, be a differential operator relative to $S$ of order $\leq N$\,. Then, $D$ respects the torsion filtration of $\mathcal E$\,, i.e. $D(T^i(\mathcal E))\subseteq T^i\mathcal E.$\,
\end{corollary}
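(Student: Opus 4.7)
The plan is to exploit the factorization $D=\widetilde{D}\circ d^N_{\mathcal{E}/S}$ provided by \prettyref{def:D46}, together with \prettyref{lem:L12}, which bounds the dimension of the image of a jet module of a coherent subsheaf. Working locally on $X$, I would set $\mathcal{F}':=T^i\mathcal{E}$ and exploit naturality: the inclusion $\iota:\mathcal{F}'\hookrightarrow \mathcal{E}$ together with the universal property of the jet module (\prettyref{lem:L45}) yields a commutative square showing that $d^N_{\mathcal{E}/S}(\mathcal{F}')\subseteq \mathcal{J}^N(\mathcal{F}'/S)'$, where $\mathcal{J}^N(\mathcal{F}'/S)'$ denotes the image of $\mathcal{J}^N(\iota/S)$ inside $\mathcal{J}^N(\mathcal{E}/S)$ as in \prettyref{lem:L12}.

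Recalling that $\widetilde{D}$ is $\mathcal{O}_X$-linear with respect to the $p_1$-structure on $\mathcal{J}^N(\mathcal{E}/S)$, one therefore has
\[
D(\mathcal{F}')=\widetilde{D}(d^N_{\mathcal{E}/S}(\mathcal{F}'))\subseteq \widetilde{D}(\mathcal{J}^N(\mathcal{F}'/S)').
\]
By \prettyref{lem:L12}, the coherent subsheaf $\mathcal{J}^N(\mathcal{F}'/S)'$, viewed as an $\mathcal{O}_X$-module via $p_1$, is annihilated by $\ann(\mathcal{F}')^{N+1}$ and hence has dimension $\leq i$, since $\mathcal{F}'=T^i\mathcal{E}$ has dimension $\leq i$ by construction. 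Because $\widetilde{D}$ is $\mathcal{O}_X$-linear, the image $\widetilde{D}(\mathcal{J}^N(\mathcal{F}'/S)')$ is a coherent $\mathcal{O}_X$-quotient of $\mathcal{J}^N(\mathcal{F}'/S)'$ sitting inside $\mathcal{E}$, so its dimension is again $\leq i$.

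The conclusion then follows from the defining maximality property of the torsion filtration: every coherent subsheaf of $\mathcal{E}$ of dimension $\leq i$ is contained in $T^i\mathcal{E}$. Hence $D(T^i\mathcal{E})\subseteq T^i\mathcal{E}$, as required. The only real subtlety, and what I expect to be the main obstacle to state cleanly, is keeping the two $\mathcal{O}_X$-module structures on the jet module straight: the dimension bound in \prettyref{lem:L12} is formulated via the $p_1$-structure, and this happens to be precisely the $\mathcal{O}_X$-structure for which $\widetilde{D}$ is linear, so the two compatibilities dovetail without requiring any further computation.
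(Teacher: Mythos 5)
Your argument is correct and follows essentially the same route as the paper: localize, use \prettyref{lem:L12} to see that the image of $\mathcal J^N(T^i\mathcal E/S)$ inside $\mathcal J^N(\mathcal E/S)$ is annihilated by $\ann(T^i\mathcal E)^{N+1}$ for the first ($p_1$) $\mathcal O_X$-structure, note that $\widetilde D$ is $p_1$-linear so its image is a submodule of $\mathcal E$ of dimension $\leq i$, and conclude by the maximality defining $T^i\mathcal E$. Your explicit remark that $d^N_{\mathcal E/S}(T^i\mathcal E)\subseteq \mathcal J^N(T^i\mathcal E/S)'$ via \prettyref{lem:L45} just spells out what the paper states as "$D$ restricted to $M'$ factors over $\mathcal J^N(M'/A)'$", so there is no substantive difference.
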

\begin{proof} The question is local so let be as above $A\longrightarrow B$\, be a homomorphism of rings and $M$ be a $B$ module and $D: M\longrightarrow M$\, be a differential operator of order $\leq N$\,. Let $M'\subset M$\, be a submodule of $M$ of dimension $\leq d$\,. Let $I=\text{ann}(M')$\,. Then by the previous proposition $I^{N+1}\otimes B\subseteq \text{ann}(\mathcal J^N(M'/A)$\,. The differential operator $D$\, restricted to $M'$\, factors over $\mathcal J^N(M'/A)'\subset \mathcal J^N(M/A)$\,. Let $\widetilde{D}: \mathcal J^N(M/A)\longrightarrow M$\, be the $B$-linear map corresponding to $D$. Then the image of $\mathcal J^N(M'/A)$\, in $\mathcal J^N(M/A)$\, is likewise annihilated by $I^{N+1}\otimes B$\, and so is $\widetilde{D}(\mathcal J^N(M'/A))\subset M$\,. Thus, the image of $M'$\, under $D$ in $M$ is contained in a submodule annihilated by $I^{N+1}$\,. Since $\dim(M')\leq d$ $\dim(B\cdot D(M'))\leq d.$ $T^d(M)$\, is the maximal submodule of $M$ of dimension $\leq d$\, and we have proved that $\dim(B\cdot D(T^d(M))\leq d$\, which implies the claim.
\end{proof} 
 Let $\pi: X\longrightarrow S$\, be a smooth morphism  of finite type of noetherian schemes and $\mathcal E$\, be a coherent sheaf on $X$. Let $\mathcal J^{(N)}(\mathcal E/S)=I_{X/S}\cdot \mathcal J^N(\mathcal E/S)$\, so in particular $\mathcal J^{(N)}(X/S)=I_{X/S}/I_{X/S}^{N+1}$\,. There is a short exact sequence
 $$ (*)\,\,0\longrightarrow \mathcal J^{(N)}(\mathcal E/k)\longrightarrow \mathcal J^N(\mathcal E/k)\longrightarrow \mathcal E\longrightarrow 0.$$ For a smooth morphism, it is well known that $I_{X/S}^N/I_{X/S}^{N+1}\cong \Omega^{(1)}(X/S)^{\otimes^sN}$\,.
 \begin{lemma}\mylabel{lem:L33} With notation as above, the  homomorphism
 $$I_{X/S}^N/I_{X/S}^{N+1}\otimes_{\mathcal O_X}\mathcal J^N(\mathcal E/S)\longrightarrow \mathcal J^N(\mathcal E/S),$$
  coming from the $\mathcal J^N(X/S )$-module structure of $\mathcal J^N(\mathcal E/S)$\, descends to a $\mathcal O_X$-linear map,
  $$I_{X/S}^N/I_{X/S}^{N+1}\otimes_{\mathcal O_X}\mathcal E\longrightarrow \mathcal J^N(\mathcal E/S).$$
  \end{lemma}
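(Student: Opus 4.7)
The plan is to reduce to the affine case, where the factorization becomes a direct consequence of the ideal relation $I_{B/A}^N\cdot I_{B/A}\subseteq I_{B/A}^{N+1}$ together with the fact that $I_{B/A}^{N+1}$ annihilates the $N$th jet module, and then to glue using the canonical nature of all constructions involved.

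Concretely, pass to an affine open $\Spec B\subset X$ lying over $\Spec A\subset S$ with $\mathcal E|_{\Spec B}=\widetilde{M}$. The jet module is $\mathcal J^N(M/A)=(B\otimes_A M)/I_{B/A}^{N+1}\cdot(B\otimes_A M)$, which is a module over the jet algebra $\mathcal J^N(B/A)=(B\otimes_A B)/I_{B/A}^{N+1}$. Restricting the action to $I_{B/A}^N$ and using that $I_{B/A}^{N+1}$ already acts as zero on $\mathcal J^N(M/A)$ gives a well-defined map
$$I_{B/A}^N/I_{B/A}^{N+1}\otimes_B \mathcal J^N(M/A)\longrightarrow \mathcal J^N(M/A).$$

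The key verification is that this map further factors through the quotient of the right-hand tensor factor by $I_{B/A}\cdot\mathcal J^N(M/A)$. Take $\alpha\in I_{B/A}^N/I_{B/A}^{N+1}$ and $\beta\in I_{B/A}\cdot \mathcal J^N(M/A)$, write $\beta=\gamma\cdot\delta$ with $\gamma\in I_{B/A}$ and $\delta\in \mathcal J^N(M/A)$; then
$$\alpha\cdot\beta=(\alpha\gamma)\cdot\delta=0,$$
because $\alpha\gamma\in I_{B/A}^{N+1}$ annihilates $\mathcal J^N(M/A)$. Consequently the map kills $I_{B/A}^N/I_{B/A}^{N+1}\otimes_B I_{B/A}\cdot\mathcal J^N(M/A)$, and invoking the short exact sequence $(*)$ to identify $\mathcal J^N(M/A)/I_{B/A}\cdot\mathcal J^N(M/A)\cong M$ yields the desired $B$-linear factorization $I_{B/A}^N/I_{B/A}^{N+1}\otimes_B M\longrightarrow \mathcal J^N(M/A)$.

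The globalization is immediate from the observation that the jet sheaves, the diagonal ideal sheaf, its powers, the module action, and the sequence $(*)$ are all canonical and compatible with Zariski localization, as established by \prettyref{lem:L19125} and \prettyref{cor:C19127}; the local factorizations therefore agree on overlaps and glue to a global $\mathcal O_X$-linear map. I do not anticipate any real obstacle: the content is a short bookkeeping with ideal multiplication. The single point deserving care is that on the right-hand tensor factor of the factored map one must use the $\mathcal O_X$-module structure on $\mathcal E$ coming from the standard identification $\mathcal J^N(\mathcal E/S)/I_{X/S}\cdot\mathcal J^N(\mathcal E/S)\cong \mathcal E$ in $(*)$, not either of the two structures present on $\mathcal J^N(\mathcal E/S)$ itself.
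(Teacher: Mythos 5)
Your proposal is correct and follows essentially the same route as the paper's proof: the whole content is that $I_{X/S}^N/I_{X/S}^{N+1}\otimes I_{X/S}\cdot\mathcal J^N(\mathcal E/S)$ maps to zero (since $I^{N}\cdot I\subseteq I^{N+1}$ annihilates the $N$-th jet module) together with the identification $\mathcal E\cong\mathcal J^N(\mathcal E/S)/I_{X/S}\cdot\mathcal J^N(\mathcal E/S)$ from the sequence $(*)$. The only difference is presentational: you verify this affine-locally and glue, while the paper states the same two facts directly at the sheaf level.
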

  \begin{proof} This follows from the exact sequence (*) and the fact that $I_{X/S}^N/I_{X/S}^{N+1}\otimes I_{X/S}\cdot \mathcal J^N(X/S)\longrightarrow \mathcal J^N(\mathcal E/S)$\, is the zero map.
  \end{proof}
  We now state here the following basic fact about jet-modules in the global case.
  \begin{proposition}\mylabel{prop: P4} Let $\pi: X\longrightarrow S$\, be a morphism of finite type of noetherian schemes. 
 \begin{enumerate}[1]
\item  for each $N\in \mathbb N_0\cup\{\mathbb N\}$\,, let $\mathcal J^{N}(-/S)$\, be the functor from quasi coherent $\mathcal O_X$-modules  to quasi coherent $\mathcal J^{ N}(X/S)$-modules, sending $\mathcal F$ to $\mathcal J^{ N}(\mathcal F/S)$\,. Then, this functor is right  exact and there is a canonical natural isomorphism $\mathcal J^{ N}(-/S)\stackrel{\cong}\longrightarrow p_2^*(-).$
\item If $\pi:X\longrightarrow S$\, is flat, then $\mathcal J^{N}(-/S)$\, is an exact functor.
\item If $\pi: X\longrightarrow S$\, is a smooth morphism of noetherian schemes, then for each $N\in \mathbb N_0$\,, the functor $\mathcal J^N(-/S)$, sending quasi coherent $\mathcal O_X$-modules to quasi coherent $\mathcal J^N(X/S)$-modules, is exact and equal to $(p^N_2)^*$\,.
\end{enumerate}
\end{proposition}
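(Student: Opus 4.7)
The overall plan is to reduce parts (2) and (3) to part (1), which identifies the jet functor with the pullback $p_2^*$ along the second projection $p_2: J^N(X/S) \to X$.

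For (1), \prettyref{lem:L19121} already supplies the local formula
$$\mathcal{J}^N(M/A) \cong M \otimes_{B,p_2} \mathcal{J}^N(B/A),$$
which is by definition $p_2^*\widetilde{M}$, viewed as a quasi-coherent sheaf on $J^N(X/S)$. These isomorphisms are canonical because they come from the universal property (\prettyref{lem:L45}) applied to the tautological filtered derivation $M \to M \otimes_{B,p_2} \mathcal{J}^N(B/A)$; hence they glue under Zariski localization (\prettyref{cor:C19127}) to the global natural isomorphism $\mathcal{J}^N(-/S) \cong p_2^*(-)$. Right exactness of the jet functor is then inherited from right exactness of the tensor product.

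Once this reformulation is in place, (2) and (3) both amount to verifying flatness of $p_2 : J^N(X/S) \to X$. For (3), \prettyref{prop:P20124} does most of the work: smoothness of $\pi$ gives that $\mathcal{J}^N(B/A)$ is a projective $B$-module, the proof resting on the locally free description $I_{B/A}^n/I_{B/A}^{n+1} \cong \Omega^{(1)}(B/A)^{\otimes^s n}$ which is symmetric in $p_1$ and $p_2$. Projectivity via $p_2$ makes $p_2$ flat, so $p_2^*$ is exact, and the identification from (1) completes the proof. For (2), assuming only flatness of $\pi$, the second-factor projection $X \times_S X \to X$ is flat by base change, so the diagonal ideal $I_{X/S}$ is $p_2$-flat. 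An induction on $N$ using the jet-bundle short exact sequences
$$0 \to I_{X/S}^N/I_{X/S}^{N+1} \to \mathcal{J}^N(X/S) \to \mathcal{J}^{N-1}(X/S) \to 0$$
together with $p_2$-flatness of the graded pieces $I_{X/S}^n/I_{X/S}^{n+1}$ yields $p_2$-flatness of $\mathcal{J}^N(X/S)$, which by (1) gives exactness of the jet functor.

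The main obstacle lies in part (2): without a smoothness hypothesis one does not have a locally free description of the graded pieces $I^n/I^{n+1}$, so their $p_2$-flatness must be extracted from flatness of $\pi$ by a careful induction tracking flatness through the powers of the diagonal ideal. The other persistent subtlety is keeping straight which of the two $\mathcal{O}_X$-module structures on $\mathcal{J}^N(X/S)$ is in play: functoriality in the module argument is via $p_2$, whereas the $\mathcal{O}_X$-module structure on $\mathcal{J}^N(\mathcal{F}/S)$ used in \prettyref{def:D46} is via $p_1$.
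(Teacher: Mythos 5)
Your treatment of parts (1) and (3) is essentially the paper's. For (1) the paper also just invokes the local identification $\mathcal J^N(M/A)\cong \mathcal J^N(B/A)\otimes_{B,p_2}M$, i.e.\ $\mathcal J^N(-/S)\cong p_2^*(-)$, and for (3) it likewise reduces to \prettyref{prop:P20124}: for smooth $\pi$ the jet algebra is projective, hence flat, over $\mathcal O_X$ (the $p_1$--$p_2$ symmetry you point out is the right remark), so $p_2^*$ is exact.

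The genuine gap is in part (2), and the induction you sketch cannot be carried out: flatness of $\pi$ does \emph{not} imply $p_2$-flatness of the graded pieces $\mathcal I_{X/S}^n/\mathcal I_{X/S}^{n+1}$, nor of $\mathcal J^N(X/S)$, and in fact statement (2) fails with only the flatness hypothesis. Take $S=\Spec k$ with $k$ a field of characteristic zero and $X=\Spec B$ with $B=k[x]/(x^2)$, which is free, hence flat, over $k$. Then $I_{B/k}=(x\otimes 1-1\otimes x)$, $I_{B/k}^2=(x\otimes x)$, and $I_{B/k}/I_{B/k}^2\cong\Omega^{(1)}(B/k)\cong B/(x)$, which is not flat over $B$; moreover $\mathcal J^1(B/k)\cong B\oplus B/(x)$ with respect to either $B$-module structure, so $p_2$ is not flat. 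Concretely, applying $\mathcal J^1(-/k)$ to the exact sequence $0\to (x)\to B\to B/(x)\to 0$ destroys left exactness: $\mathcal J^1((x)/k)=B\otimes_k(x)$ (the ideal $I^2_{B/k}$ acts by zero on it) contains the nonzero element $x\otimes x$, which maps to the class of $x\otimes x\in I_{B/k}^2$, i.e.\ to zero, in $\mathcal J^1(B/k)$. A reduced example is the nodal curve $B=k[x,y]/(xy)$, where $\Omega^{(1)}(B/k)$ is not locally free, hence not flat. So your instinct that part (2) is the main obstacle is correct, but no bookkeeping of flatness through the powers of the diagonal ideal can close it; one needs an extra hypothesis, such as smoothness as in (3) or directly the $p_2$-flatness of $\mathcal J^N(X/S)$. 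Be aware that the paper's own proof of (2) is only the assertion that it "immediately follows from (1)", which presupposes exactly the flatness that fails here: (1) by itself gives right exactness only.
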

\begin{proof}
\begin{enumerate}[1]
\item This follows from the local definition of the jet-modules. Here, of coarse, $p_2: J^N(X/S)\longrightarrow X$\, is the finite affine morphism which corresponds to the second $\mathcal O_X$-module structure.
\item This immediately follows from (1).
\item  This follows from the fact, that for $X/S$\, smooth, the $N^{th}$-jet algebra $\mathcal J^N(X/S)$\, is a projective, hence flat $\mathcal O_X$-module (see \prettyref{prop:P20124}). So the assertion follows from (2).
\end{enumerate}
\end{proof}
\begin{proposition}\mylabel{prop:P6}(Exact sequence I) Let $\pi:X\longrightarrow S$\, be a morphism of finite type between noetherian schemes and $Y\subset X$\, be a closed subscheme and $\mathcal F$\, be a quasi coherent $\mathcal O_Y$-module.
Let $\mathcal I_Y$\, be the defining ideal sheaf of $Y$. Then  for all $N\in \mathbb N\cup \{\infty\}$\, there is an exact sequence
$$0\longrightarrow  (\mathcal I_Y)\cdot \mathcal J^N_X(\mathcal F/S)\longrightarrow \mathcal J_X^N(\mathcal F/S)\longrightarrow \mathcal J^N_Y(\mathcal F/S)\longrightarrow 0.$$
Here multiplication with $\mathcal I_Y$\, is via the first $\mathcal O_X$-module structure.
\end{proposition}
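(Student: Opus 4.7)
The assertion is local, so choose open affines $\Spec A \subset S$ and $\Spec B \subset X$ with $\Spec B$ mapping to $\Spec A$. The closed subscheme $Y \cap \Spec B$ corresponds to an ideal $I \subseteq B$ (a local model for $\mathcal I_Y$), and $\mathcal F$ corresponds to a $B/I$-module $M$, which we regard as a $B$-module via $B \twoheadrightarrow B/I$. It suffices to produce, in this affine setting, a short exact sequence
\[
0 \longrightarrow I \cdot \mathcal J^N(M/A) \longrightarrow \mathcal J^N(M/A) \longrightarrow \mathcal J^N_{B/I}(M/A) \longrightarrow 0,
\]
where $I$ acts via the first tensor factor $p_1$, and $\mathcal J^N_{B/I}(M/A)$ denotes the jet module computed for $M$ viewed as a $B/I$-module.

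First I would set up the comparison map. By definition, $\mathcal J^N(M/A) = (B \otimes_A M)/I_{B/A}^{N+1}(B \otimes_A M)$ and $\mathcal J^N_{B/I}(M/A) = ((B/I)\otimes_A M)/I_{(B/I)/A}^{N+1}((B/I)\otimes_A M)$. The ring surjection $B \otimes_A B \twoheadrightarrow (B/I) \otimes_A (B/I)$ is compatible with the generators $b \otimes 1 - 1 \otimes b$ of the diagonal ideals, so $I_{B/A}$ surjects onto $I_{(B/I)/A}$, and therefore $I_{B/A}^{N+1}$ surjects onto $I_{(B/I)/A}^{N+1}$. Tensoring with $M$ over the second factor and passing to the quotients gives a canonical surjection $\pi : \mathcal J^N(M/A) \twoheadrightarrow \mathcal J^N_{B/I}(M/A)$.

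Next I would identify $\ker(\pi)$ via the snake lemma applied to the commutative diagram
\[
\begin{array}{ccccccccc}
0 & \to & I_{B/A}^{N+1}(B \otimes_A M) & \to & B\otimes_A M & \to & \mathcal J^N(M/A) & \to & 0 \\
 & & \downarrow & & \downarrow & & \downarrow \pi & & \\
0 & \to & I_{(B/I)/A}^{N+1}((B/I)\otimes_A M) & \to & (B/I)\otimes_A M & \to & \mathcal J^N_{B/I}(M/A) & \to & 0
\end{array}
\]
whose rows are exact and whose vertical maps are all surjective (the leftmost by the paragraph above). The kernel of the middle vertical arrow is the image of $I \otimes_A M$, i.e.\ the submodule $I \cdot (B \otimes_A M)$ acting through the first tensor factor; note that the second-factor action of $I$ on $B \otimes_A M$ is automatically zero because $IM = 0$. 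The snake lemma then yields $\ker(\pi) = \bigl(I \cdot (B \otimes_A M)\bigr)/\bigl(I \cdot (B \otimes_A M) \cap I_{B/A}^{N+1}(B\otimes_A M)\bigr)$, which is precisely the image of $I \cdot (B\otimes_A M)$ in $\mathcal J^N(M/A)$, i.e.\ $I \cdot \mathcal J^N(M/A)$ with respect to the first $B$-module structure.

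Finally, the local isomorphisms glue: the construction is canonical because it is induced by the universal surjection $B \twoheadrightarrow B/I$ and the functoriality of jet modules, so the cocycle conditions follow from those for the jet modules themselves. The case $N = \mathbb N$ is obtained by passing to the projective limit of the finite-level exact sequences, using that each $I_{B/A}^{N+1}(B \otimes_A M)$ is cofinal with the filtration defining the $I_{B/A}$-adic completion, as in the remark following \prettyref{prop:P2012}. The one point to be slightly careful about is the surjectivity of the limit map, but this follows from the Mittag-Leffler condition (each transition map $\mathcal J^{N+1}(M/A) \to \mathcal J^N(M/A)$ is surjective by \prettyref{lem:L19121}), which is the only mild technical issue in the argument.
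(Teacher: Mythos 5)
Your proof is correct and follows essentially the same route as the paper: reduce to the affine situation $\Spec A\subset S$, $\Spec B\subset X$ with ideal $I\subset B$ and $B/I$-module $M$, and identify the quotient of $\mathcal J^N(M/A)$ by the first-factor action of $I$ with the jet module of $M$ computed over $B/I$. Your snake-lemma bookkeeping and the inverse-limit discussion for $N=\mathbb N$ merely spell out details that the paper's one-line local computation leaves implicit.
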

\begin{proof} This is a local question , so let $A\longrightarrow B$\, be a homomorphism of rings, $I\subset B$\, be an ideal and $M$ be a $B/I$-module. We have 
\begin{gather*}\mathcal J_{B/A}^N(M/A)=B\otimes_AM/I_{B/A}^{N+1}\cdot (B\otimes_AM)\quad \text{and}\\ 
\mathcal J_{(B/I)/A}(M/A)=(B/I)\otimes_AM/I_{(B/I)/A}^{N+1}\cdot (B/I\otimes_AM).
\end{gather*} $\overline{B\otimes_AM}$\, is already a $B\otimes_A(B/I)$\, module, so tensoring with $B/I$\,via the first $B$-module structure, we obviously get $\overline{B/I\otimes_AM}$\, which is $\mathcal J_{(B/I)/A}(M/A)$\,.
\end{proof}
\begin{proposition}\mylabel{prop:P66} (Exact sequence II) Let as above $\pi: X\longrightarrow S$\,be a morphism of finite type between noetherian schemes and $Y\hookrightarrow X$\, be a closed subscheme with defining ideal sheaf $\mathcal I_Y$\,. Let $\mathcal F$\, be a quasi coherent $\mathcal O_Y$-module. Then  for all $N\in \mathbb N_0\cup \{\mathbb N\}$\, there is an exact sequence 
$$0\longrightarrow \mathcal I_Y\cdot \mathcal J^{N}_X(\mathcal F/S)\mid_Y\longrightarrow \mathcal J_X^N(\mathcal F/S)\mid_Y\longrightarrow \mathcal J^N_Y(\mathcal F\mid_Y/S)\longrightarrow 0,$$
 where multiplication with $\mathcal I_Y$\, is via the second $\mathcal O_X$-module structure on the jet bundle.
\end{proposition}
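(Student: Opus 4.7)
The plan is to reduce to a local affine computation, parallel to the proof of Exact sequence I (\prettyref{prop:P6}), and identify the kernel of the natural surjection as multiplication by $\mathcal{I}_Y$ via the second $\mathcal{O}_X$-module structure. Concretely, I would take $\Spec A \subset S$ and $\Spec B \subset X$ with $\Spec B$ mapping to $\Spec A$, let $I \subset B$ cut out $Y$ locally, and let $M$ be the $B$-module corresponding to $\mathcal{F}$, so that $\mathcal{F}|_Y$ is locally $M/IM$. The claim reduces to exactness of
\[
0 \longrightarrow I \cdot_{p_2}\!\bigl(\mathcal{J}^N(M/A) \otimes_{B,p_1}\! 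B/I\bigr) \longrightarrow \mathcal{J}^N(M/A) \otimes_{B,p_1}\! B/I \longrightarrow \mathcal{J}^N_{(B/I)/A}(M/IM) \longrightarrow 0.
\]

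Using $\mathcal{J}^N(M/A) = (B \otimes_A M)/I_{B/A}^{N+1}(B \otimes_A M)$, the middle term becomes $(B/I) \otimes_A M$ modulo the image $\bar I_{B/A}^{N+1}$ of $I_{B/A}^{N+1}$ in $(B/I) \otimes_A B$; the right-hand term is $(B/I) \otimes_A (M/IM)$ modulo $I_{(B/I)/A}^{N+1}$. I would then define the arrow by $\bar b \otimes m \mapsto \bar b \otimes \bar m$ and verify well-definedness by noting that a generator $\bar b \otimes 1 - 1 \otimes b$ of $\bar I_{B/A}$ acts on $(B/I) \otimes_A M$ by $\bar c \otimes m \mapsto \overline{bc} \otimes m - \bar c \otimes bm$, which matches the action of $\bar b \otimes 1 - 1 \otimes \bar b \in I_{(B/I)/A}$ on $(B/I) \otimes_A (M/IM)$.

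The crux is the kernel computation. A priori two contributions could enter: the image of $(B/I) \otimes_A IM$ (the naive kernel of the map of underlying tensor products), and extra relations coming from the potentially larger denominator $I_{(B/I)/A}^{N+1}$ compared to $\bar I_{B/A}^{N+1}$. The second contribution is empty: for $b = i \in I$, the generator $-1 \otimes i$ belongs to $\bar I_{B/A}$ but acts trivially on $(B/I) \otimes_A (M/IM)$, so $\bar I_{B/A}$ and $I_{(B/I)/A}$ induce the same action on that quotient. Hence the kernel equals the image of $(B/I) \otimes_A IM$, and the rewriting $\bar b \otimes im = (1 \otimes i)\cdot(\bar b \otimes m) = p_2(i)\cdot(\bar b \otimes m)$ identifies this image with $I\cdot_{p_2}$ applied to the middle module. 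The canonicity of the jet construction guarantees that the local identifications glue, and the case $N = \mathbb{N}$ follows by taking projective limits, as in \prettyref{prop:P20125}.

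The main obstacle I foresee is purely notational bookkeeping: keeping careful track of which $\mathcal{O}_X$-module structure each instance of the ideal is acting through, since both the quotient ideal $\bar I_{B/A}$ and the $p_2$-action involve elements of the form $1 \otimes i$. Once this is separated cleanly, the rest of the argument is formal.
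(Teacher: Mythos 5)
Your proof is correct and follows essentially the same route as the paper: reduce to the affine case $A\to B$, $I\subset B$, $M$ a $B$-module, identify $\mathcal J^N_X(\mathcal F/S)\mid_Y$ with $\overline{(B/I)\otimes_AM}$ and its quotient by the $p_2$-action of $I$ with $\overline{(B/I)\otimes_A(M/IM)}=\mathcal J^N_Y(\mathcal F\mid_Y/S)$. Your careful check that the image of $I_{B/A}^{N+1}$ and $I_{(B/I)/A}^{N+1}$ impose the same relations (so no extra kernel appears) is exactly the point the paper's very terse proof passes over in silence, and it is handled correctly.
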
 
\begin{proof} The question is again local, so let $A\longrightarrow B$\, be a homomorphism of rings and $M$ be a $B$-module and $I\subset B$\, be the ideal corresponding to $Y\subset X$\,. The module $\mathcal J^N_{Y/S}(F|_Y/S)$\, then corresponds to the module $\overline{(B/I)\otimes_A(M/IM)}$\, (modulo the ideal $I_{(B/I)/A}^{N+1}$) and the jet bundle $\mathcal J^N_X(\mathcal F/S)\mid_Y$\, corresponds to the module $\overline{B/I\otimes_AM}$\, so tensoring with $B/I$\, via the second $B$-module structure we get $\overline{B/I\otimes M/IM}$\, which was the claim.
\end{proof} 
\begin{proposition}\mylabel{prop:P7}(First cotangential sequence)\\
 Let $E\stackrel{i}\hookrightarrow X\stackrel{\pi}\longrightarrow S$\, be morphisms of finite type of noetherian   schemes such that $i$ is a closed immersion. Let $\mathcal F$\, be a quasi coherent $\mathcal O_X$-module. Then there is and exact sequence
$$(*_1)\quad \mathcal I_E\cdot \mathcal F/\mathcal I_E^{2}\cdot \mathcal F\longrightarrow \mathcal J_X^{(1)}(\mathcal F/S)\mid_E\longrightarrow \mathcal J_E^{(1)}(\mathcal F\mid_E/S)\longrightarrow 0.$$
If  $\pi$\, and $\pi\circ i$\, are smooth and $\mathcal F$\, is locally free,  the sequence $(*_1)$\, is exact on the left.\\
In this case, the first module is isomorphic to $\mathcal I_E/\mathcal I_E^{2}\otimes_{\mathcal O_X}\mathcal F$.\\
These exact sequences are functorial in $\mathcal F\in \text{QCoh}(X)$\,.
\end{proposition}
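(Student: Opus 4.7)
The plan is to reduce $(*_1)$ to the classical first cotangential sequence of K\"ahler differentials, tensored with $\mathcal{F}\mid_E$, via the identification $\mathcal{J}^{(1)}_X(\mathcal{F}/S) \cong \Omega^{(1)}(X/S) \otimes_{\mathcal{O}_X} \mathcal{F}$. I would first establish this identification from the general algebraic fact that for any ring $R$, ideal $I \subseteq R$ and $R$-module $N$, the multiplication map induces a canonical isomorphism $I/I^2 \otimes_{R/I} N/IN \stackrel{\cong}{\longrightarrow} IN/I^2 N$. Applied locally with $R = B \otimes_A B$, $I = I_{B/A}$ (so $R/I = B$ under multiplication) and $N = B \otimes_A M$ (so $N/IN = M$), this gives $\mathcal{J}^{(1)}(M/A) = I_{B/A}(B\otimes_A M)/I_{B/A}^2(B\otimes_A M) \cong \Omega^{(1)}(B/A) \otimes_B M$, which globalises.

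Granted this, I would take the classical first cotangential sequence of K\"ahler differentials,
$$\mathcal{I}_E/\mathcal{I}_E^2 \longrightarrow \Omega^{(1)}(X/S)\mid_E \longrightarrow \Omega^{(1)}(E/S) \longrightarrow 0,$$
and tensor it over $\mathcal{O}_E$ with $\mathcal{F}\mid_E$. Right-exactness of the tensor product keeps the sequence right-exact; by associativity of the tensor product together with the identification of the previous step, the middle and right terms become $\mathcal{J}^{(1)}_X(\mathcal{F}/S)\mid_E$ and $\mathcal{J}^{(1)}_E(\mathcal{F}\mid_E/S)$ respectively, and the first term $\mathcal{I}_E/\mathcal{I}_E^2 \otimes_{\mathcal{O}_E} \mathcal{F}\mid_E$ is rewritten as $\mathcal{I}_E \mathcal{F}/\mathcal{I}_E^2 \mathcal{F}$ by a second application of the same algebraic identity (now with $R = \mathcal{O}_X$, $I = \mathcal{I}_E$, $N = \mathcal{F}$). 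The same identity also exhibits this module as $\mathcal{I}_E/\mathcal{I}_E^2 \otimes_{\mathcal{O}_X} \mathcal{F}$, since $\mathcal{I}_E/\mathcal{I}_E^2$ is annihilated by $\mathcal{I}_E$; this gives the stated identification of the first module.

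For left-exactness under the smoothness hypotheses, I would argue that when $\pi$ and $\pi \circ i$ are both smooth, $i\colon E \hookrightarrow X$ is a regular immersion of smooth $S$-schemes and the cotangential sequence becomes a \emph{short} exact sequence of locally free $\mathcal{O}_E$-modules. Since $\Omega^{(1)}(E/S)$ is projective over $\mathcal{O}_E$, this short exact sequence splits, so tensoring with the locally free sheaf $\mathcal{F}\mid_E$ preserves left-exactness, yielding the claimed injectivity in $(*_1)$. Functoriality in $\mathcal{F} \in \text{QCoh}(X)$ is inherited from the functoriality of the tensor product. The only nontrivial ingredient, and the main obstacle, is the classical left-exact first cotangential sequence for K\"ahler differentials in the smooth setting; this is, however, a standard and well-documented result which I would simply cite from \cite{EGA}, so the entire argument reduces to formal bookkeeping around the identity $IN/I^2 N \cong I/I^2 \otimes_{R/I} N/IN$.
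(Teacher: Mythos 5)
Your route through the classical conormal sequence is a genuinely different strategy from the paper's (which directly constructs the map $IM/I^2M\to \overline{(A/I)\otimes_k M}$, $fm\mapsto \overline{1\otimes fm}$, and verifies its $\mathcal O_X$-linearity), and it could be made to work, but as written it rests on a general algebraic identity that is false. For a ring $R$, an ideal $I$ and an $R$-module $N$, the multiplication map $I/I^2\otimes_{R/I}N/IN\to IN/I^2N$ is always surjective but in general \emph{not} injective: take $R=k[x,y]$, $I=(x,y)$, $N=R/(x)$; then $I/I^2\otimes_{R/I}N/IN\cong k^2$ while $IN/I^2N\cong k$. This hits your argument twice. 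Your first application, the identification $\mathcal J^{(1)}(M/A)\cong \Omega^{(1)}(B/A)\otimes_B M$, does happen to give a true statement, but it does not follow from the claimed identity; the correct reason is the split structure of the jet algebra: tensor the exact sequence $0\to I_{B/A}/I_{B/A}^2\to \mathcal J^1(B/A)\to B\to 0$, which is split by $p_2$, with $M$ over $B$ via the second module structure and use $\mathcal J^1(M/A)\cong \mathcal J^1(B/A)\otimes_{B,p_2}M$ (\prettyref{lem:L45}). Your second application, with $R=\mathcal O_X$, $I=\mathcal I_E$, $N=\mathcal F$, is simply false for a general quasi-coherent $\mathcal F$ (the same counterexample applies), so you may not rewrite the first term $\mathcal I_E/\mathcal I_E^2\otimes_{\mathcal O_X}\mathcal F$ of your tensored sequence as $\mathcal I_E\cdot\mathcal F/\mathcal I_E^2\cdot\mathcal F$; note the proposition itself asserts that identification only in the smooth, locally free case.

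What is actually missing, then, is the verification that the composite $\mathcal I_E/\mathcal I_E^2\otimes_{\mathcal O_X}\mathcal F\to \mathcal J^{(1)}_X(\mathcal F/S)\mid_E$ factors through the canonical surjection onto $\mathcal I_E\cdot\mathcal F/\mathcal I_E^2\cdot\mathcal F$; once that is known, the image in the middle term is unchanged and exactness of $(*_1)$ as stated follows from your tensored sequence. Concretely, under the identification above the composite sends $\overline{f}\otimes m$ to $\overline{1\otimes fm}$ (the term $\overline{f\otimes m}$ dies after restriction to $E$ via the first module structure), which visibly depends only on the product $fm$; one must then check that $\mathcal I_E^2\cdot\mathcal F$ is killed, using $(f\otimes 1-1\otimes f)^2\equiv 0$ in $\mathcal J^1$, and that the induced map on $\mathcal I_E\cdot\mathcal F/\mathcal I_E^2\cdot\mathcal F$ is $\mathcal O_X$-linear. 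This is precisely the content of the paper's construction of $\psi_M$, so the step you dismiss as formal bookkeeping is where the real work lies. Your argument for left exactness in the smooth, locally free case (the conormal sequence of smooth $S$-schemes is split short exact, hence stays exact after tensoring) is fine, and in that case the surjection $\mathcal I_E/\mathcal I_E^2\otimes_{\mathcal O_X}\mathcal F\to \mathcal I_E\cdot\mathcal F/\mathcal I_E^2\cdot\mathcal F$ is indeed an isomorphism, as the proposition claims there.
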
 
\begin{proof}
 In  view of  the exact sequence II, it is enough to  construct  a functorial homomorphism
$$ \phi: \mathcal I_E\cdot \mathcal F/\mathcal I_E^2\cdot \mathcal F \longrightarrow \mathcal I_E\cdot \mathcal J^{(1)}(\mathcal F/S)\mid_E.$$
The question is local, so let $k\longrightarrow A$\, be a homomorphism of commutative rings, $I\subset A$\, be an ideal, and $M$\, be an $A$-module. We have to construct an $A$-linear homomorphism
$$\psi_M: IM/I^2M\longrightarrow (A/I)\otimes_k M/(I_{A/k}^2\cdot ((A/I)\otimes_kM)),$$
which is functorial in $M$, i.e., is a natural transformation from $A-\Mod$\, to $A-\Mod$\,.
Let $f\in I, m\in M$\, be given. We let $\psi_M(fm):=\overline{1\otimes fm}$. This is obviously additive. Secondly, since $0=(f\otimes 1-1\otimes f)^2= f^2\otimes 1 +2f\otimes f+1\otimes f^2=1\otimes f^2$\,, it follows that $I^2M$ maps to zero.  We have to show that this is $A$-linear. So for $a\in A$\, we have to show that 
$$\psi_M(afm)=\overline{1\otimes afm}=\overline{a\otimes fm}\quad \forall a\in A, f\in I, m\in M.$$
Now,$\mathcal J^1(M/k)$\, is an $\mathcal J^1(A/k)=A\oplus \Omega^{(1)}(A/k)$-module. There is the standard $A$-linear homomorphism $I/I^2\longrightarrow \Omega^{(1)}(A/k)\otimes_AA/I$\, which is in our notation the homomorphism
$\psi_A$\, with $\psi_A(f)=\overline{1\otimes f-f\otimes 1}=\overline{1\otimes f}$\,. Thus we have in 
$\mathcal J^1(A/k)\otimes_AA/I$\, the identity $\overline{1\otimes af}=\overline{a\otimes f}$\,. Then from the $\mathcal J^1(A/k)$-module structure of $\mathcal J^1(M/k)$\, it follows that 
$\overline{1\otimes afm}=\overline{a\otimes fm}$\, which is the $A$-linearity of the map $\psi_M$\,.\\
Thus we have in any case an exact sequence 
$$\mathcal I_E\cdot \mathcal F/\mathcal I_E^2\cdot \mathcal F\longrightarrow \mathcal J^{(1)}_{X/S}(\mathcal F/S)\mid_E\longrightarrow \mathcal J^{(1)}_{E/S}(\mathcal F\mid_E/S)\longrightarrow 0.$$

 So let now $\pi$\, and $\pi\circ i$\, be smooth.
  From the standard cotangential sequence, the claim is true for $\mathcal F\cong \mathcal O_X$\,. Since the assertion is local,   the claim is true for locally free $\mathcal O_X$\,-modules in which case 
  $$\mathcal I_E/\mathcal I_E^2\otimes_{\mathcal O_X}\mathcal F\cong \mathcal I_E\cdot \mathcal F/\mathcal I_E^2\cdot \mathcal F.$$ Both sides are right exact functors from $\mathcal O_X$-modules to $\mathcal O_E$-modules, so the claim follows by taking locally a free presentation and the five lemma. 
\end{proof}
\begin{proposition}\mylabel{prop:P8} (Second cotangential  sequence))\\
Let $X\stackrel{f}\longrightarrow Y\stackrel{g}\longrightarrow Z$\, be morphisms of  finite type of noetherian schemes and $\mathcal E$\, be a quasi coherent $\mathcal O_Y$-module. Then  there is an exact sequence
$$ (*_2)\quad f^*\mathcal J^{(1)}(\mathcal E/Z)\stackrel{\alpha_{\mathcal E}}\longrightarrow \mathcal J^{(1)}(f^*\mathcal E/Z)\stackrel{\beta_{\mathcal E}}\longrightarrow f^*\mathcal E\otimes_{\mathcal O_X}\mathcal J^{(1)}(X/Y)\longrightarrow 0.$$
If $f$ and $g$ are smooth and $\mathcal E$\, is locally free, then the left hand map is injective.\\
These sequences are functorial in $\mathcal E\in \text{QCoh}(Y).$
\end{proposition}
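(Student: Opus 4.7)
The question is Zariski-local, so I reduce to the affine setting: let $k\to A\to B$ be ring homomorphisms corresponding to $Z, Y, X$, and let $\mathcal E$ correspond to an $A$-module $M$. Then $f^*\mathcal E$ corresponds to $M\otimes_A B$ and, using the canonical identification $(M\otimes_A B)\otimes_B\mathcal J^{(1)}(B/A)=M\otimes_A\mathcal J^{(1)}(B/A)$, the claim becomes exactness of
$$\mathcal J^{(1)}(M/k)\otimes_A B\xrightarrow{\alpha_M}\mathcal J^{(1)}((M\otimes_A B)/k)\xrightarrow{\beta_M} M\otimes_A\mathcal J^{(1)}(B/A)\longrightarrow 0.$$
The strategy is to construct $\alpha_M$ and $\beta_M$ via the universal property (\prettyref{lem:L45}), check $\beta_M\circ\alpha_M=0$ directly, reduce exactness to the case $M=A$ (which is the classical second cotangential sequence), and then extend to general $M$ by a free presentation and a diagram chase.

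For $\beta_M$, the natural surjection $B\otimes_k B\twoheadrightarrow B\otimes_A B$ carries $I_{B/k}$ onto $I_{B/A}$ and hence $I_{B/k}^{N+1}$ onto $I_{B/A}^{N+1}$, inducing a surjection $\mathcal J^1((M\otimes_A B)/k)\twoheadrightarrow\mathcal J^1((M\otimes_A B)/A)$ that restricts to ideal parts. Applying the description $\mathcal J^1(N/A)=N\otimes_{B,p_2}\mathcal J^1(B/A)$ with $N=M\otimes_A B$ identifies the target canonically with $M\otimes_A\mathcal J^{(1)}(B/A)$. For $\alpha_M$, the composition $M\hookrightarrow M\otimes_A B\xrightarrow{d^1}\mathcal J^1((M\otimes_A B)/k)$ is a filtered $k$-derivation of $M$ (the target carrying the $A$-module structure through $p_2$ and $A\to B$); by \prettyref{lem:L45} it factors uniquely through a $\mathcal J^1(A/k)$-linear map $\mathcal J^1(M/k)\to\mathcal J^1((M\otimes_A B)/k)$, which is in particular $p_1$-linear over $A$. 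Extending $B$-linearly via $p_1$ produces $\mathcal J^1(M/k)\otimes_A B\to\mathcal J^1((M\otimes_A B)/k)$, and its restriction to ideal parts is $\alpha_M$. The vanishing $\beta_M\circ\alpha_M=0$ is immediate: generators $a\otimes 1-1\otimes a$ of $I_{A/k}$ (with $a\in A$) are sent to zero in $B\otimes_A B$ since the image of $a$ in $B$ is an $A$-scalar.

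All three functors of $M$ are right exact: the right side trivially; $\mathcal J^{(1)}(-/k)$ is right exact because it is a direct summand of the right-exact $\mathcal J^1(-/k)$ from \prettyref{lem:L19121}, split naturally by the derivation $d^1$; and the middle is then right exact as the composition of $-\otimes_A B$ with $\mathcal J^{(1)}(-/k)$. For $M=A$ the sequence specialises to the classical second cotangential sequence
$$B\otimes_A\Omega^{(1)}(A/k)\to\Omega^{(1)}(B/k)\to\Omega^{(1)}(B/A)\to 0$$
for $k\to A\to B$, a standard result. Since jet modules commute with direct sums, exactness holds for any free $A$-module, and the general case follows by choosing a free presentation $A^J\to A^I\to M\to 0$ and running the standard diagram chase comparing natural transformations of right-exact functors that agree on free modules.

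For the injectivity of $\alpha_{\mathcal E}$ when $f$ and $g$ are smooth and $\mathcal E$ is locally free, I Zariski-localize to reduce to the case $\mathcal E$ free. Then $\mathcal J^{(1)}(\mathcal F/S)\cong\Omega^{(1)}(X/S)\otimes\mathcal F$ for smooth $X/S$ and flat $\mathcal F$ (using that $\mathcal J^1(\mathcal O_X/S)$ is $\mathcal O_X$-flat on smooth $X/S$, via \prettyref{prop:P20124}), so the sequence becomes the classical second cotangential sequence for smooth $k\to A\to B$ tensored with the flat module $\mathcal E$; the classical sequence is left exact in the smooth case, and flatness preserves injectivity. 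The main technical obstacle I anticipate is the careful bookkeeping of the two $A$-module structures $p_1$ and $p_2$ on jet modules when constructing $\alpha_M$ and verifying that it genuinely lands in the ideal part $\mathcal J^{(1)}$ rather than only the full $\mathcal J^1$; once this is in place the five-lemma and the smooth-injectivity steps are routine.
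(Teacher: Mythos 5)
Your argument is correct and essentially coincides with the paper's: reduce to the affine setting, construct the two canonical maps (you via \prettyref{lem:L45} and the surjection $B\otimes_kB\twoheadrightarrow B\otimes_AB$, the paper via its base-change identifications of the local jet modules), use right exactness of all three functors to pass from the classical second cotangential sequence in the case $\mathcal E=\mathcal O_Y$ through free modules to arbitrary quasi coherent $\mathcal E$ by a free presentation, and deduce left exactness in the smooth, locally free case from the classical left-exact cotangential sequence. The differences are only presentational, e.g.\ your explicit bookkeeping of the ideal part $\mathcal J^{(1)}$ as a natural direct summand of the full jet module, which the paper likewise invokes.
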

\begin{proof}
\begin{enumerate}[1]
\item   Construction of  the exact sequence\\
The question is local so, let $X=\Spec A$, $Y=\Spec B$\, $Z=\Spec C$\, and let $\mathcal E$\, correspond to the $B$-module $M$. The generalized second cotangential sequence is then a subsequence (the trivial direct summand $f^*\mathcal E$\, deleted) of the following sequence
\begin{gather*}C\otimes_{B,p_1}\overline{B\otimes_AM}=\overline{C\otimes_AM}\longrightarrow \overline{C\otimes_A(C\otimes_BM)}\\
\longrightarrow \overline{C\otimes_B(C\otimes_BM)} =(C\otimes_BM)\otimes_B\overline{B\otimes_BC},\longrightarrow 0
\end{gather*}
where the last identity comes from the fact that 
$$\mathcal J^N(B/A)\otimes_AM\cong \mathcal J^N(B\otimes_AM/A)$$ which is a special case of base change for jet modules.
Since the homomorphisms of the sequence are canonical, these glue to the sequence $(*_2)$\,. Observe that in $\mathcal J^1(M/k)$\,, $\mathcal J^{(1)}(M/k)$\, sits as a direct summand. Observe furthermore, that each three terms in the sequence $(*_2)$ are functors from $\mathcal O_Y-\Mod$\, to $\mathcal O_X-\Mod$\, that are all right exact and $\alpha_{\mathcal E}$\, and $\beta_{\mathcal E}$\, are natural transformations of functors from $\mathcal O_Y-\Mod$\, to $\mathcal O_X-\Mod$\,.
\item  Exactness\\
 If $\mathcal E\cong \mathcal O_Y$\, this is the standard  cotangential exact sequence. For arbitrary  locally free $\mathcal E$\, the question is local , so we may assume that $X =\Spec A,Y=\Spec B,Z=\Spec C $ are affine such that $\mathcal E$\, is free on $\Spec B$\,. Then from the standard cotangential sequence for the situation
$$\Spec A\longrightarrow \Spec B\longrightarrow \Spec C$$ and taking direct sums, we know that the sequence is locally exact, hence globally  exact.\\
For arbitrary coherent $\mathcal E$\,, we only need to show that the sequence is locally exact, since this is a statement about sheaves. If $\Gamma(\Spec B,\mathcal E)=M$\,  choose a presentation 
$$B^{\oplus I}\longrightarrow B^{\oplus J}\longrightarrow M\longrightarrow 0.$$ Then the second cotangential sequence for $M$ is the cokernel of the homomorphism of the second cotangential exact sequence for $A^{\oplus I}$\, to $A^{\oplus J}$\,. (all three terms are  right exact functors from $\mathcal O_Y-\text{Mod}$ to $\mathcal O_X-\text{Mod}$\,. Since the cokernel functor is right exact, the claim follows. \\
If $f$ and $g$ are smooth and $\mathcal E$\, is locally free, this reduces locally to $\mathcal E\cong \mathcal O_Y$\, where this is the standard second cotangential exact sequence.
\end{enumerate}
\end{proof}
\begin{lemma}\mylabel{lem:L360} Let $q: X\longrightarrow S$\, be a morphism of finite type of noetherian schemes where $X$ possesses an ample invertible sheaf and $D: \mathcal F_1 \longrightarrow \mathcal F_2$\, be a differential operator of order $\leq N$\, relative to $S$ for some $N\in \mathbb N_0$\, between quasi coherent sheaves $\mathcal F_1$\, and $\mathcal F_2$\,,  Then, there is a filtration $F^{\bullet}\mathcal F_1$\, and a filtration $\mathcal F^{\bullet}(\mathcal F_2)$\, such that $F^j(\mathcal F_i), i=1,2$\, is coherent and $D(\mathcal F^j_1)\subseteq \mathcal F^j_2$\,, where $j\in J$\, and $J$ is the directed index set of the filtration.
\end{lemma}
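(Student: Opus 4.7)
The plan is to build a directed index set $J$ of pairs of coherent subsheaves $(F,G)$ with $F\subseteq\mathcal{F}_1$, $G\subseteq\mathcal{F}_2$, and $D(F)\subseteq G$, whose two projections will supply the desired filtrations. The main technical input is the following coherence statement: if $\mathcal{G}\subseteq\mathcal{F}_1$ is a coherent subsheaf, then the smallest $\mathcal{O}_X$-submodule of $\mathcal{F}_2$ containing $D(\mathcal{G})$ is coherent.

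For this, factor $D=\widetilde{D}\circ d^N_{\mathcal{F}_1/S}$ as in \prettyref{def:D46}, so $\widetilde{D}:\mathcal{J}^N(\mathcal{F}_1/S)\to\mathcal{F}_2$ is $\mathcal{O}_X$-linear with respect to the first-factor structure. The universal property of jet modules applied to the inclusion $\mathcal{G}\hookrightarrow\mathcal{F}_1$ (viewed as a filtered derivation) produces a canonical map $\mathcal{J}^N(\mathcal{G}/S)\to\mathcal{J}^N(\mathcal{F}_1/S)$; denote its image by $\mathcal{J}^N(\mathcal{G}/S)'$, as in \prettyref{lem:L12}. Then the $\mathcal{O}_X$-submodule of $\mathcal{F}_2$ generated by $D(\mathcal{G})$ equals $\widetilde{D}(\mathcal{J}^N(\mathcal{G}/S)')$, so it suffices to show $\mathcal{J}^N(\mathcal{G}/S)'$ is coherent over $\mathcal{O}_X$ via the first factor. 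Working locally, $\mathcal{G}$ corresponds to a finitely generated $B$-module $M$; by the presentation of the jet algebra recalled before \prettyref{lem:L19125} and by \prettyref{cor:C19126}, $\mathcal{J}^N(B/A)$ is generated as a $B$-module via $p_1$ by the monomials $(d^1\underline{x})^{\underline{I}}$ with $|\underline{I}|\leq N$. Using the formula $\mathcal{J}^N(M/A)=M\otimes_{B,p_2}\mathcal{J}^N(B/A)$ from \prettyref{lem:L45}, together with the identity $p_2(b)\cdot j\equiv p_1(b)\cdot j\pmod{I_{B/A}}$ which allows one to move second-factor scalars through to $M$, one checks that the finite family $\{m_i\otimes(d^1\underline{x})^{\underline{I}}\}$ generates $\mathcal{J}^N(M/A)$ as a $B$-module via $p_1$. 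Hence $\mathcal{J}^N(\mathcal{G}/S)'$ is coherent, and so is its image $\widetilde{D}(\mathcal{J}^N(\mathcal{G}/S)')$ in $\mathcal{F}_2$.

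Since $X$ is noetherian of finite type over the noetherian $S$ (and in particular possesses an ample invertible sheaf by hypothesis), every quasi-coherent sheaf on $X$ is the directed union of its coherent subsheaves. Define
$$J:=\{(F,G)\mid F\subseteq\mathcal{F}_1,\,G\subseteq\mathcal{F}_2\text{ coherent},\,D(F)\subseteq G\},$$
ordered by componentwise inclusion. Given $(F_1,G_1),(F_2,G_2)\in J$, put $F:=F_1+F_2$, let $H_F$ be the coherent subsheaf of $\mathcal{F}_2$ produced above from $F$, and set $G:=G_1+G_2+H_F$; then $(F,G)\in J$ dominates both, so $J$ is directed. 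Every coherent $F\subseteq\mathcal{F}_1$ occurs as the first component of $(F,H_F)\in J$, and every coherent $G\subseteq\mathcal{F}_2$ as the second component of $(0,G)\in J$, so the two projection filtrations indexed by $J$ are coherent and exhaust $\mathcal{F}_1$ and $\mathcal{F}_2$ respectively.

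The main obstacle is the coherence of $\widetilde{D}(\mathcal{J}^N(\mathcal{G}/S)')$, i.e. the finite generation of $\mathcal{J}^N(M/A)$ as a $B$-module via the first factor; once this is established from the explicit presentation of the jet algebra, the remainder is a formal directed-system argument.
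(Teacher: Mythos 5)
Your proposal is correct and takes essentially the same route as the paper: factor $D=\widetilde{D}\circ d^N_{\mathcal F_1/S}$, observe that for a coherent subsheaf $F\subseteq\mathcal F_1$ the image $\widetilde{D}\bigl(\mathcal J^N(X/S)\cdot d^N_{\mathcal F_1/S}(F)\bigr)$ is a coherent subsheaf of $\mathcal F_2$ containing $D(F)$, and exhaust $\mathcal F_1$ by its coherent subsheaves. You merely add detail the paper leaves implicit (the local $p_1$-finite generation of the jet module and the explicit directed index set); note only that $\widetilde{D}(\mathcal J^N(\mathcal G/S)')$ need only \emph{contain}, not equal, the $\mathcal O_X$-submodule generated by $D(\mathcal G)$, which is all your argument actually uses.
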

\begin{proof}  Under the assumptions made, each quasi-coherent sheaf $\mathcal E$\, is the direct limit of its coherent subsheaves $\mathcal E^i\subset \mathcal E $\,. Write $D$ as 
$$\mathcal F_1\stackrel{d^N_{\mathcal F_1}}\longrightarrow \mathcal J^N(\mathcal F_1/S)\stackrel{\widetilde{D}}\longrightarrow \mathcal F_2.$$
if $\mathcal F^j_1\subset \mathcal F_1$\, is a coherent subsheaf, then $d^N_{\mathcal F_1}(\mathcal F^j_1)\subset \mathcal J^N(\mathcal F_1/S)$\, is contained in the image of the canonical homomorphism $\mathcal J^N(\mathcal F^j_1/S)\longrightarrow \mathcal J^N(\mathcal F_1/S)$\,  since 
$$\mathcal F^j_1\longrightarrow  \mathcal J^N(O_X/S)\cdot d^N_{\mathcal F_1}(\mathcal F_1^j)$$ is a filtered module derivation  and the homomorphism  exists by the universal property of the jet-modules (see \prettyref{lem:L45}).  It follows that $D(\mathcal F^j_1)$\, is contained in the coherent subsheaf 
$$\mathcal F_2^j:=\widetilde{D}(\mathcal J^N(X/S)\cdot d^N_{\mathcal F_1}(\mathcal F^j_1))\subset \mathcal F_2.$$
Thus we can write $D$ as the filtered direct limit of differential operators $D_j: \mathcal F^j_1\longrightarrow \mathcal F^j_2$\, for some  directed index set $J$\,.
\end{proof}
\subsection{Basic properties of differential operators}
In this subsection, we collect the basic properties of linear algebraic differential operators which directly follow from the corresponding basic properties of the jet-modules.
\begin{lemma}\mylabel{lem:L295} Let $A\longrightarrow B$\, be homomorphisms of rings, $M_1,M_2$ be  $B$-modules $N$\, be an $A$-module, and $D: M_1\longrightarrow M_2$\, be a differential  operator relative to $A$. Then, $D\otimes_A\text{id}_N: M_1\otimes_AN\longrightarrow M_2\otimes_AN$\, is a differential operator of $B$-modules relative to $A$.
\end{lemma}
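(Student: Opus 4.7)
The plan is to push the defining factorization of $D$ through the functor $-\otimes_A N$ and then identify the resulting middle term with a jet module of the tensor product via the base change isomorphisms proved above.

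First I would unpack the hypothesis. By definition, there exists $N'\in \mathbb N_0$ and a $B$-linear map $\widetilde D:\mathcal J^{N'}(M_1/A)\longrightarrow M_2$ such that $D=\widetilde D\circ d^{N'}_{M_1/A}$, where $\mathcal J^{N'}(M_1/A)$ is regarded as a $B$-module via the first factor. Applying $-\otimes_A N$ (which is a functor on $A$-modules) yields the composition
$$M_1\otimes_A N\xrightarrow{d^{N'}_{M_1/A}\otimes_A\id_N}\mathcal J^{N'}(M_1/A)\otimes_A N\xrightarrow{\widetilde D\otimes_A\id_N}M_2\otimes_A N,$$
whose overall composite is precisely $D\otimes_A\id_N$. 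The second arrow is evidently $B$-linear, so all that remains is to recognize the first arrow, after applying a suitable canonical isomorphism, as the universal filtered derivation $d^{N'}_{M_1\otimes_A N/A}$.

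This recognition is exactly the content of \prettyref{prop:POkt2914} (Base change II): the map $d^{N'}_{M_1/A}\otimes_A\id_N$ is a filtered module derivation relative to $A$ and gives rise to the canonical isomorphism
$$\alpha_N:\mathcal J^{N'}(M_1\otimes_A N/A)\xrightarrow{\ \cong\ }\mathcal J^{N'}(M_1/A)\otimes_A N,$$
and by its very construction $\alpha_N\circ d^{N'}_{M_1\otimes_A N/A}=d^{N'}_{M_1/A}\otimes_A\id_N$. Thus, setting $\widetilde{D\otimes\id_N}:=(\widetilde D\otimes_A\id_N)\circ \alpha_N$, I get a $B$-linear map $\mathcal J^{N'}(M_1\otimes_A N/A)\longrightarrow M_2\otimes_A N$ satisfying $D\otimes_A\id_N=\widetilde{D\otimes\id_N}\circ d^{N'}_{M_1\otimes_A N/A}$, which exhibits $D\otimes_A\id_N$ as a differential operator of order $\leq N'$ relative to $A$.

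The main (and essentially only) obstacle is to invoke the correct variant of base change: one needs an isomorphism relating $\mathcal J^{N'}(M_1\otimes_A N/A)$ with $\mathcal J^{N'}(M_1/A)\otimes_A N$, not with the tensor product over $B$ or over the jet algebra, and one must check that the universal derivation on the tensor product corresponds under this isomorphism to $d^{N'}_{M_1/A}\otimes_A\id_N$. Both points are built into the proof of \prettyref{prop:POkt2914}, so once that proposition is applied the argument is immediate.
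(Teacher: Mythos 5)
Your proposal is correct and follows essentially the same route as the paper: the paper's proof also invokes Base change II (\prettyref{prop:POkt2914}) to identify $\mathcal J^{N'}(M_1\otimes_AN/A)$ with $\mathcal J^{N'}(M_1/A)\otimes_AN$ and then takes $\widetilde D\otimes_A\id_N$ as the $B$-linear map realizing $D\otimes_A\id_N$. Your extra remark that $\alpha_N$ intertwines the universal derivations is exactly how $\alpha_N$ is constructed there, so nothing is missing.
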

\begin{proof} This follows from the properties of the jet modules $\mathcal J^N(M_1\otimes_AN/A)\cong \mathcal J^N(M/A)\otimes_AN$\, (see \prettyref{prop:POkt2914}). The operator $D$ is given by a $B$-linear map $\widetilde{D}: \mathcal J^N(M_1/A)\longrightarrow M_2$\,, so we get a $B$-linear map 
$$\widetilde{D}\otimes_A\text{id}_N: \mathcal J^N(M_1/A)\otimes_AN=\mathcal J^N(M_1\otimes_AN/A),\longrightarrow M_2\otimes_AN$$ where the $B$-module structures are given by the first tensor factor.
\end{proof}
We can generalize the previous lemma slightly to
\begin{proposition}\mylabel{prop:P2910}(Base change 0) Let $ A\longrightarrow B$\, be homomorphisms of commutative rings, $D: M_1\longrightarrow M_2$\, be a differential operator between $B$-modules relative to $A$. Let $A\longrightarrow A'$\,
 be  homomorphism of commutative rings. Then $D\otimes_A\text{id}_{A'}: M_1\otimes_AA'\longrightarrow M_2\otimes_A{A'}$\, is a differential operator on $B\otimes_AA'$-modules relative to $A'$\,.
\end{proposition}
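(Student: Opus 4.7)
The plan is to factor the tensored operator through the jet module of the base-changed source, using the base change isomorphism (Base change I, see \prettyref{prop:P2913}) to identify the jet module over $A'$ with the jet module over $A$ tensored up.

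First, since $D$ is a differential operator of some order $\leq N$ between $B$-modules relative to $A$, by definition there is a $B$-linear homomorphism
$$\widetilde{D}: \mathcal J^N(M_1/A) \longrightarrow M_2 \quad \text{with} \quad D = \widetilde{D} \circ d^N_{M_1/A},$$
where $\mathcal J^N(M_1/A)$ carries its $B$-module structure via the first tensor factor $p_1$. Tensoring $\widetilde{D}$ with $\operatorname{id}_{A'}$ over $A$ yields a $B \otimes_A A'$-linear homomorphism
$$\widetilde{D} \otimes_A \operatorname{id}_{A'}: \mathcal J^N(M_1/A) \otimes_A A' \longrightarrow M_2 \otimes_A A'.$$

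Next, I would invoke (Base change I) (see \prettyref{prop:P2913}) to obtain the canonical isomorphism
$$\beta_{M_1}: \mathcal J^N(M_1 \otimes_A A' / A') \stackrel{\cong}\longrightarrow \mathcal J^N(M_1/A) \otimes_A A'$$
of $\mathcal J^N(B \otimes_A A'/A')$-modules. Composing gives a $B \otimes_A A'$-linear homomorphism
$$\widetilde{D'} := (\widetilde{D} \otimes_A \operatorname{id}_{A'}) \circ \beta_{M_1}: \mathcal J^N(M_1 \otimes_A A'/A') \longrightarrow M_2 \otimes_A A',$$
which is the candidate factoring map for $D \otimes_A \operatorname{id}_{A'}$.

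The step that requires a line of verification is the compatibility of $\beta_{M_1}$ with the universal derivations. However, this is essentially built into the construction of $\beta_{M_1}$ in the proof of (Base change I): the isomorphism $\beta_{M_1}$ is obtained by applying the universal property of $\mathcal J^N(M_1\otimes_A A'/A')$ to the filtered derivation $d^N_{M_1/A} \otimes_A \operatorname{id}_{A'}$, so by construction
$$\beta_{M_1} \circ d^N_{M_1 \otimes_A A'/A'} = d^N_{M_1/A} \otimes_A \operatorname{id}_{A'}.$$
Combining this with the factorization of $D$ gives
$$\widetilde{D'} \circ d^N_{M_1 \otimes_A A'/A'} = (\widetilde{D} \circ d^N_{M_1/A}) \otimes_A \operatorname{id}_{A'} = D \otimes_A \operatorname{id}_{A'},$$
which exhibits $D \otimes_A \operatorname{id}_{A'}$ as a differential operator of order $\leq N$ between $B \otimes_A A'$-modules relative to $A'$ in the sense of \prettyref{def:D46}. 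No genuine obstacle arises; the whole content lies in the base change isomorphism from (Base change I), everything else is formal bookkeeping.
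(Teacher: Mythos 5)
Your proposal is correct and follows essentially the same route as the paper: both factor $D$ as $\widetilde{D}\circ d^N_{M_1/A}$, tensor $\widetilde{D}$ with $\operatorname{id}_{A'}$, and identify $\mathcal J^N(M_1\otimes_AA'/A')$ with $\mathcal J^N(M_1/A)\otimes_AA'$ via (Base change I). Your explicit verification that $\beta_{M_1}$ intertwines the universal derivations is a detail the paper leaves implicit, but it is exactly what its construction of $\beta_{M_1}$ provides.
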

\begin{proof} This follows from the base change properties of the jet-modules : we have an isomorphism  
$$\mathcal J^N(M/A)\otimes_AA'\cong \mathcal J^N(M\otimes_AA'/A')$$ see \prettyref{prop:P2913} (Base change I for jet- modules).\\
 $D$ corresponds to a $B$-linear map $\widetilde{D}:\mathcal J^N(M_1/A)\longrightarrow M_2$\,. We get a $B\otimes_AA'$-linear map 
$$\widetilde{D}\otimes_A\mbox{Id}_{A'}: \mathcal J^N(M_1\otimes_AA'/A')\cong \mathcal J^N(M_1/A)\otimes_AA'\longrightarrow M_2\otimes_AA'$$ and the claim follows.
\end{proof}
\begin{proposition}\mylabel{prop:P19129}(Base change I) Let $A\longrightarrow B$\, and $A\longrightarrow A'$\, be homomorphisms of commutative rings and $N$\, be an $A'$-module. Given a differential operator $D: M_1\longrightarrow M_2$\, of $B$-modules relative to $A$, the $A'$-linear map
$$D\otimes_A\id_N: M_1\otimes_AN\longrightarrow M_2\otimes_AN$$\, is a differential operator of $B\otimes_AA'$-modules
\end{proposition}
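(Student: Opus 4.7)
The plan is to obtain this statement by composing the two preceding results: \prettyref{prop:P2910} (Base change 0 for differential operators) and \prettyref{lem:L295} (tensoring a differential operator by a module of scalars).

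First, I would apply \prettyref{prop:P2910} to the given differential operator $D\colon M_1\longrightarrow M_2$ with the base change $A\longrightarrow A'$. This yields a differential operator
$$D\otimes_A\id_{A'}\colon M_1\otimes_AA'\longrightarrow M_2\otimes_AA'$$
of $B\otimes_AA'$-modules relative to $A'$. At this stage, the source and target are genuine $A'$-modules (in fact, $B\otimes_AA'$-modules), so I am in the setting of \prettyref{lem:L295} with the base ring $A'$ in place of $A$ and the $(B\otimes_AA')$-modules $M_i\otimes_AA'$ in place of the $B$-modules.

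Next, I would apply \prettyref{lem:L295} to the differential operator $D\otimes_A\id_{A'}$ and the $A'$-module $N$. This gives that
$$(D\otimes_A\id_{A'})\otimes_{A'}\id_N\colon (M_1\otimes_AA')\otimes_{A'}N\longrightarrow (M_2\otimes_AA')\otimes_{A'}N$$
is a differential operator of $B\otimes_AA'$-modules relative to $A'$. Finally, I would invoke the standard canonical isomorphism $(M_i\otimes_AA')\otimes_{A'}N\cong M_i\otimes_AN$, compatible with the $B\otimes_AA'$-action, under which this map is identified with $D\otimes_A\id_N$.

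There is no real obstacle: the whole argument is a two-step assembly, and the only thing to check is that the identification $(M_i\otimes_AA')\otimes_{A'}N\cong M_i\otimes_AN$ respects the $B\otimes_AA'$-module structures and intertwines the two maps, which is immediate from the construction of the jet-module base change isomorphisms already used in \prettyref{prop:P2913} and \prettyref{prop:POkt2914}. In particular, the associated $\mathcal J^N(B\otimes_AA'/A')$-linear map representing $D\otimes_A\id_N$ is obtained from $\widetilde D\colon \mathcal J^N(M_1/A)\longrightarrow M_2$ by first tensoring with $A'$ over $A$ (using Base change I for jet-modules, \prettyref{prop:P2913}) and then with $N$ over $A'$ (using Base change II, \prettyref{prop:POkt2914}).
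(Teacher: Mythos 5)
Your argument is correct, but it follows a slightly different route than the paper. The paper's own proof is a one-step application of the composite jet-module identity of Base change III (\prettyref{prop:P2012}): writing $D=\widetilde D\circ d^N_{M_1/A}$, the isomorphism $\mathcal J^N(M_1\otimes_AN/A')\cong \mathcal J^N(M_1/A)\otimes_AN$ lets one take $\widetilde D\otimes_A\id_N$ directly as the representing map for $D\otimes_A\id_N$. You instead assemble the statement at the level of differential operators, chaining \prettyref{prop:P2910} (base change along $A\to A'$) with \prettyref{lem:L295} applied over the new base $A'$ to the $B\otimes_AA'$-modules $M_i\otimes_AA'$, and then transporting along the canonical identification $(M_i\otimes_AA')\otimes_{A'}N\cong M_i\otimes_AN$. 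This mirrors, one level up, exactly how the paper itself derived Base change III for jet modules by combining Base change I (\prettyref{prop:P2913}) and Base change II (\prettyref{prop:POkt2914}), so both arguments ultimately rest on the same jet-module isomorphisms; yours trades the single citation of \prettyref{prop:P2012} for two extra (easy) verifications, which you correctly flag: that the identification is $B\otimes_AA'$-linear and intertwines $(D\otimes_A\id_{A'})\otimes_{A'}\id_N$ with $D\otimes_A\id_N$, i.e.\ that differential operators are stable under conjugation by module isomorphisms. One terminological nit: the representing map of $D\otimes_A\id_N$ need only be $B\otimes_AA'$-linear with respect to the first module structure, not $\mathcal J^N(B\otimes_AA'/A')$-linear, since the target $M_2\otimes_AN$ carries no natural $\mathcal J^N$-module structure.
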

\begin{proof} This follows in the standard way from the identity $\mathcal J^N(M\otimes_AN/A')\cong \mathcal J^N(M/A)\otimes_AN$\, (see \prettyref{prop:P2012}, Base change III).
\end{proof}
\begin{proposition}\mylabel{prop:POkt297}(exterior products) Let $k
\longrightarrow A$\, and $k\longrightarrow B$\, be homomorphisms of commutative rings, $D: M_1\longrightarrow M_2$\, and $E: N_1\longrightarrow N_2$\, be differential operators between $A$-modules $M_1,M_2$\, and $B$-modules $N_1,N_2$\, respectively. Then, the tensor product over $k$:
$$D\otimes_kE: M_1\otimes_kN_1\longrightarrow M_2\otimes_kN_2$$ 
is a differential operator between the $A\otimes_kB$-modules $M_1\otimes_kN_1$\, and $M_2\otimes_kN_2$\,.
\end{proposition}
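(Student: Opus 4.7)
The plan is to extract the representing maps $\widetilde{D}$ and $\widetilde{E}$, tensor them over $k$, and reinterpret the result as the representing map of a differential operator on $M_1 \otimes_k N_1$ via the universal property of jet-modules (\prettyref{lem:L45}). Write $D = \widetilde{D} \circ d^N_{M_1/k}$ with $\widetilde{D}: \mathcal J^N(M_1/k) \to M_2$ an $A$-linear map, and $E = \widetilde{E} \circ d^M_{N_1/k}$ with $\widetilde{E}: \mathcal J^M(N_1/k) \to N_2$ a $B$-linear map. Their $k$-tensor
$$\widetilde{D} \otimes_k \widetilde{E}: \mathcal J^N(M_1/k) \otimes_k \mathcal J^M(N_1/k) \longrightarrow M_2 \otimes_k N_2$$
is then $(A \otimes_k B)$-linear for the first-factor structure, so it remains to exhibit the source as a quotient of some $\mathcal J^L(M_1 \otimes_k N_1/k)$.

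The key step is a finite-order refinement of (Exterior Products II) (\prettyref{prop:POkt298}). Since $I_{A \otimes_k B/k}$ is generated by the images of $I_{A/k}$ and $I_{B/k}$, any $(N+M+1)$-fold product of such generators must, by pigeonhole, contain either an $(N+1)$-fold subproduct from $I_{A/k}$ or an $(M+1)$-fold subproduct from $I_{B/k}$. This yields
$$I_{A \otimes_k B/k}^{N+M+1} \subseteq I_{A/k}^{N+1} + I_{B/k}^{M+1}$$
inside $(A \otimes_k B)^{\otimes_k 2} \cong (A \otimes_k A) \otimes_k (B \otimes_k B)$, and hence a canonical surjection of $(A \otimes_k B)$-algebras
$$\pi: \mathcal J^{N+M}(A \otimes_k B/k) \twoheadrightarrow \mathcal J^N(A/k) \otimes_k \mathcal J^M(B/k),$$
compatible with both the $p_1$- and the $p_2$-structures.

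Third, I would check that
$$t := d^N_{M_1/k} \otimes_k d^M_{N_1/k}: M_1 \otimes_k N_1 \longrightarrow \mathcal J^N(M_1/k) \otimes_k \mathcal J^M(N_1/k)$$
is a filtered derivation over $k$ in the sense of \prettyref{def:D1712}, when the target is given the $\mathcal J^{N+M}(A \otimes_k B/k)$-module structure inherited via $\pi$; this reduces to the individual $p_2$-linearity of $d^N_{M_1/k}$ and $d^M_{N_1/k}$. Applying \prettyref{lem:L45} produces a unique $\mathcal J^{N+M}(A \otimes_k B/k)$-linear map $\phi: \mathcal J^{N+M}(M_1 \otimes_k N_1/k) \to \mathcal J^N(M_1/k) \otimes_k \mathcal J^M(N_1/k)$ with $\phi \circ d^{N+M}_{M_1 \otimes_k N_1/k} = t$, and setting $\widetilde{D \otimes_k E} := (\widetilde{D} \otimes_k \widetilde{E}) \circ \phi$ displays $D \otimes_k E$ as a differential operator of order $\leq N + M$.

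The main obstacle is the finite-order truncation step: \prettyref{prop:POkt298} is stated only for the completed jet-module $\mathcal J^{\mathbb N}$, whereas the proof needs an $(N,M)$-controlled version to certify a specific finite order for $D \otimes_k E$. The pigeonhole bound $I_{A \otimes_k B/k}^{N+M+1} \subseteq I_{A/k}^{N+1} + I_{B/k}^{M+1}$ is precisely what controls this. Once $\pi$ is in hand, the remaining bookkeeping---especially making sure one consistently uses the $p_2$-structure for the filtered-derivation check and the $p_1$-structure for the linearity of $\widetilde{D} \otimes_k \widetilde{E}$---is routine.
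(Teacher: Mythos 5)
Your proof is correct and takes essentially the same route as the paper: represent $D$ and $E$ by $\widetilde{D}$ and $\widetilde{E}$, map a finite-order jet module of $M_1\otimes_kN_1$ onto $\mathcal J^N(M_1/k)\otimes_k\mathcal J^M(N_1/k)$, and compose with $\widetilde{D}\otimes_k\widetilde{E}$. The only difference is that where the paper merely asserts such a map for some $N'\gg N$ (implicitly using the $I_{A\otimes_kB/k}^{2N+1}$ inclusion from Exterior Products I), you make the truncation explicit with the concrete order bound $N'=N+M$ via the pigeonhole inclusion and \prettyref{lem:L45}, which is a sharpening of the same argument rather than a different one.
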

\begin{proof} This follows from the properties of the jet-modules (exterior products II), namely 
$$\mathcal J^{\mathbb N}(M/k)\otimes_k\mathcal J^{\mathbb N}(N/k)\cong \mathcal J^{\mathbb N}(M\otimes_kN/k).$$ If for some $N\in \mathbb N_0 $\, there is an $A$-linear map $\widetilde{D}: \mathcal J^N(M_1/k)\longrightarrow M_1$\, and a $B$-linear map $\widetilde{E}: \mathcal J^N(N_1/k)\longrightarrow N_2$, for some large $N'>>N$\,, there is an $A\otimes_KB$-linear map
$$\mathcal J^{N'}(M_1\otimes_kN_1/k)\longrightarrow \mathcal J^N(M_1/k)\otimes_k\mathcal J^N(N_1/k)\stackrel{\widetilde{D}\otimes_k\widetilde{E}}\longrightarrow M_2\otimes_kN_2.$$
The proof in the opposite direction is the same, one has only to bear in mind that the order of the differential operators may change.
\end{proof} 
\begin{lemma}\mylabel{lem:L105} Let $k\longrightarrow A$\, be a homomorphism of rings and $M_1, M_2, N_1,N_2,\,$\, be $A$-modules with differential operators $D_i: M_i\longrightarrow N_i, i=1,2$\, and let $\phi_i: M_i\longrightarrow N_i, i=1,2$\, be $A$-linear maps commuting with the $D_i$\,.\\ Then, the $k$-linear maps induced by $D_i$, 
$$\text{ker}(\phi_1)\longrightarrow \text{ker}(\phi_2)\quad \text{and}\quad \text{coker}(\phi_1)\longrightarrow \text{coker}(\phi_2)$$ are differential operators relative to $k$. Also  an arbitrary direct sum  
$$\bigoplus_{i\in I}{D_i}: \bigoplus_{i\in I}M_i\longrightarrow \bigoplus_{i\in I}N_i$$ is a differential operator for an arbitrary index set $I$ and differential operators $D_i: M_i\longrightarrow N_i$\,.
\end{lemma}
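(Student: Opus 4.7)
The plan is to reduce everything to the universal property of jet modules (\prettyref{lem:L45}) together with their right-exactness (\prettyref{lem:L19121}) and compatibility with direct sums. I read the hypothesis (up to a small typo) as asserting a commuting square
$$\phi_2\circ D_1 \;=\; D_2\circ \phi_1\colon\ M_1\longrightarrow N_2,$$
where $\phi_1\colon M_1\to M_2$ and $\phi_2\colon N_1\to N_2$ are $A$-linear, so the conclusion concerns the obvious induced maps $\ker\phi_1\to\ker\phi_2$ and $\coker\phi_1\to\coker\phi_2$.

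Writing each $D_i = \widetilde D_i\circ d^N_{M_i/k}$ for a common order $N$ (take $N\ge\max(N_1,N_2)$ if the orders are finite, otherwise $N=\mathbb N$), naturality of the universal derivation and the uniqueness clause of \prettyref{lem:L45} together yield the lifted commutativity
$$\widetilde D_2\circ \mathcal J^N(\phi_1/k) \;=\; \phi_2\circ \widetilde D_1\colon\ \mathcal J^N(M_1/k)\longrightarrow N_2,$$
since both $A$-linear maps precompose with $d^N_{M_1/k}$ to the common value $\phi_2\circ D_1 = D_2\circ\phi_1$.

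For the kernel, let $\iota\colon K:=\ker\phi_1\hookrightarrow M_1$. The candidate $A$-linear lift is $\widetilde D_1\circ \mathcal J^N(\iota/k)\colon \mathcal J^N(K/k)\to N_1$; it lands in $\ker\phi_2$, because both $\phi_2\circ \widetilde D_1\circ \mathcal J^N(\iota/k)$ and the zero map precompose with $d^N_{K/k}$ to $\phi_2\circ D_1\circ\iota = D_2\circ\phi_1\circ\iota = 0$, so by \prettyref{lem:L45} they agree. For the cokernel, right-exactness identifies $\mathcal J^N(\coker\phi_1/k) = \coker\mathcal J^N(\phi_1/k)$, and the composition $\mathcal J^N(M_2/k)\stackrel{\widetilde D_2}\longrightarrow N_2\twoheadrightarrow \coker\phi_2$ kills $\img\mathcal J^N(\phi_1/k)$ by the lifted commutativity (its image lies in $\phi_2(N_1)$), whence it descends to the desired $\mathcal J^N$-level lift of the induced map between cokernels.

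For the direct sum, the key input is that $\mathcal J^N(-/k)$ commutes with direct sums (as used in \prettyref{cor:C19123}). When the orders $N_i$ are uniformly bounded, take a common $N$ and assemble: $\bigoplus_i\widetilde D_i\colon \bigoplus_i\mathcal J^N(M_i/k)\cong \mathcal J^N\bigl(\bigoplus_i M_i/k\bigr)\to \bigoplus_i N_i$. The main potential obstacle is the case of an infinite index set with unbounded orders $N_i$: strictly speaking one must work with $N=\mathbb N$, and checking that the completed construction $\mathcal J^{\mathbb N}$ still interacts well with arbitrary direct sums is the only step requiring genuine care; apart from this point, the whole argument is a routine two-step diagram chase driven entirely by the uniqueness clause of \prettyref{lem:L45}.
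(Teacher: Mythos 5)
Your proof is correct and takes essentially the same route as the paper's: the kernel case amounts to restricting $\widetilde D_1$ to the image of $\mathcal J^N(\ker\phi_1/k)$ in $\mathcal J^N(M_1/k)$ (the paper writes this submodule as $\mathcal J^N(A/k)\cdot d^N_{G/k}(G_1)$ and checks $\widetilde D$ maps it into the target submodule), the cokernel case uses right-exactness of $\mathcal J^N(-/k)$ to descend $\widetilde D_2$ to the quotient, and the direct-sum case uses that jet modules commute with direct sums. Your caveat about an infinite index set with unbounded orders is well placed rather than a defect: the paper's one-line argument implicitly assumes a common finite order too, since only the finite-level jet modules $\mathcal J^N$, not the completed $\mathcal J^{\mathbb N}$, commute with arbitrary direct sums.
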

\begin{proof} The last statement follows from the fact, that the jet-modules commute with direct sums. \\
For the first statement, observe, that, more generally, if $D: G\longrightarrow F$\, is a differential operator relative to $k$ and $G_1\subset G$\, and $F_1\subset F$\, are $A$-sub-modules with $D(G_1)\subset F_1$\,, then the restriction of $D$ to $G_1$\, is a differential operator relative to $k$\,.\\
Namely, consider the  $\mathcal J^N(A/k)$- module $\mathcal J^N(A/k)\cdot d^N_{G/k}(G_1)\subset \mathcal J^N(G/k)$\,.\\ Obviously, the restriction of $d^N_{G/k}$\, to $G_1$\, gives a module derivation 
$$t_{G_1/k}: G_1\longrightarrow \mathcal J^N(A/k)\cdot d^N_{G/k}(G_1). $$
By the universal property, ther exists a $\mathcal J^N(A/k)$-linear homomorphism 
$$\phi: \mathcal J^N(G_1/k)\longrightarrow \mathcal J^N(A/k)\cdot d^N_{G/k}(G_1),\quad \text{such that}\quad t_{G_1/k}=\phi\circ d^N_{G_1/S}.$$
If $\widetilde{D}: \mathcal J^N(G/k)\longrightarrow F$\, corresponds to $D$, I claim that 
$$\widetilde{D}(\mathcal J^N(A/k)\cdot d^N_{G/k}(G_1))\subset F_1.$$
Indeed, $\widetilde{D}$\, is $A$-linear with respect to the first $A$-module structure of $\mathcal J^N(G/k)$\,. The quantity $d^N_{G/k}(G_1)$\, is an $A$-submodule with respect to the second $A$-module structure of $\mathcal J^N(A/k)$\,. Thus, $\mathcal J^N(A/k)\cdot d^N_{G/k}(G_1)= A\cdot^{(1)}d^N_{G/k}(G_1)$
the superscript $(-)^{(1)}$\, indicates that we take the $A$-module generated by $d^N_{G/k}(G_1)$\, with respect to the first module structure. But this is then an $A$-bi-submodule, (since the bi-module structure is commutative), or, equivalently a $\mathcal J^N(A/k)$-submodule of $\mathcal J^N(G/k)$\,. Since $\widetilde{D}$\, is $A$-linear with respect to the first $A$-module structure, the claim follows. \\
Now, the statement for the cokernel.\\
Let $\pi_G: G\twoheadrightarrow G_2$\, and $\pi_F: F\twoheadrightarrow F_2$\, be $A$-linear surjective maps such that the given differential operator $D: G\longrightarrow F$\, 
descends to a $k$-linear map $D_2: G_2\longrightarrow F_2$\,.
 The claim is that $D_2$\, is a differential operator of $A$-modules relative to $k$\,. Let $G_1$\, and $F_1$\, be the kernels of $\pi_G$\, and $\pi_F$\, respectively,  with inclusions $i_G$\, and $i_F$\,.\\
By the first case, the induced map $D_1: G_1\longrightarrow F_1$\, is a differential operator relative to $k$. By the right exactness of the jet-module-functor we have 
\begin{gather*}\mathcal J^N(G_2/k)\cong \mathcal J^N(G/G_1/k)\cong \mathcal J^N(G/k)/\text{im}(\mathcal J^N(i_G/k)( \mathcal J^N(G_1/k)))
\end{gather*} We know, that $D_1$\, factors over a map 
$$\widetilde{D_1}: \text{im}(\mathcal J^N(i_G//k))(\mathcal J^N(G_1/k)))\longrightarrow F_1.$$ We can form the quotient map
$$\widetilde{D_2}: \mathcal J^N(G_2/k)\cong\mathcal J^N(G/k)/\text{im}(\mathcal J^N(i_G/k))(\mathcal J^N(G_1/k))\longrightarrow F/F_1\cong F_2.$$
Then, obviously, $D_2$\, factors over $\widetilde{D_2}$\, since the quotient map has to factor over the quotient module of the modules over which the first two maps factor. \\
This shows that $D_2: G_2\longrightarrow F_2$\, is a differential operator relative to $k$. The case of the map induced on cokernels is a special case of this. 
\end{proof}
\subsection{Existence of differential operators in the affine case}
We want to investigate the question, under which conditions $DO^N_{A/k}(M_1,M_2)\subsetneq DO^{N+1}_{A/k}(M_1,M_2)$\,, where $k\longrightarrow A$\, is a homomorphism of commutative rings $M_1,M_2$\, are $A$-modules. We start with a proposition. 
\begin{proposition}\mylabel{prop:P11171} Let $k\longrightarrow A$\, be a homomorphism of noetherian rings and $M_1,M_2$\, be finitely generated $A$-modules. 
\begin{enumerate}[1]
\item Let $\ann(M_2)=I$\,. Given any $A$-linear map
$\widetilde{D}: \mathcal J^N(M_1/k)\longrightarrow M_2$\, for some $N\in \mathbb N_0$\,, there is some $K\in \mathbb N$\, such that $\widetilde{D}$\, factors over $\widetilde{D'}: \mathcal J^N((M_1/I^K\cdot M_1)/k)\longrightarrow M_2$\,.
\item Let now $\ann(M_1)=I$\,. Then there is some $K\in \mathbb N$\, such that the image of $\widetilde{D}$\, is a submodule annihilated by $I^K$\,.
\end{enumerate}
\end{proposition}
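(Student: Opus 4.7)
The plan is to handle both parts by a common calculational mechanism: the identity $1 \otimes b = b \otimes 1 + d^1 b$ in $\mathcal{J}^N(A/k)$ moves $b$ between the two tensor factors at the cost of a correction $d^1 b \in I_{A/k}$. Since $\widetilde{D}$ is $A$-linear with respect to the first tensor factor (in the sense of \prettyref{def:D46}), scalars of that form can be pulled outside $\widetilde{D}$; the annihilator hypotheses then close the argument, and in both parts $K = N+1$ will suffice.

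For (1), use the right exactness of $\mathcal{J}^N(-/k)$ (\prettyref{lem:L19121}) to identify $\mathcal{J}^N((M_1/I^K M_1)/k)$ with the quotient of $\mathcal{J}^N(M_1/k)$ by the image of $\mathcal{J}^N(I^K M_1/k)$; as an $A$-submodule under the first-factor action, this image is generated by the classes $\overline{1 \otimes x}$ with $x \in I^K M_1$. It thus suffices to show $\widetilde{D}(\overline{1 \otimes x}) = 0$ for such $x$. Writing $x = b_1 \cdots b_K m$ with $b_i \in I$, $m \in M_1$, and expanding
\[ 1 \otimes (b_1 \cdots b_K) = \prod_{i=1}^K (b_i \otimes 1 + d^1 b_i) = \sum_{S \subseteq \{1,\ldots,K\}} \Big(\prod_{j\in S} b_j \otimes 1\Big)\Big(\prod_{i \notin S} d^1 b_i\Big), \]
any subset $S$ with $|S| \leq K - N - 1$ contributes a product of more than $N$ differentials $d^1 b_i$, hence vanishes modulo $I_{A/k}^{N+1}$. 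For the surviving terms $|S| \geq K - N$, so once $K \geq N+1$ one has $\prod_{j \in S} b_j \in I$, which becomes an $A$-scalar that kills $M_2$ after being pulled through $\widetilde{D}$.

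For (2), show that $I^{N+1}$ annihilates the image of $\widetilde{D}$. Under the first-factor action, this image is the $A$-submodule of $M_2$ generated by elements $\widetilde{D}(y \cdot \overline{1 \otimes m})$ with $m \in M_1$ and $y$ a monomial $(d^1 a_{i_1})\cdots(d^1 a_{i_r})$ of degree $r \leq N$. To annihilate such an element by $b_1 \cdots b_{N+1} \in I^{N+1}$, pull the scalar inside $\widetilde{D}$ and compute in $\mathcal{J}^N(M_1/k)$:
\[ (b_1 \cdots b_{N+1}) \otimes m = \prod_{i=1}^{N+1}(1 \otimes b_i - d^1 b_i) \cdot (1 \otimes m). \]
Every subset $S$ with $|S| \geq 1$ in the expansion yields a factor $\prod_{j \in S} b_j \cdot m = 0$ because $IM_1 = 0$, leaving only $(-1)^{N+1}(d^1 b_1)\cdots(d^1 b_{N+1}) \cdot (1 \otimes m)$, which lies in $I_{A/k}^{N+1}(1 \otimes m)$ and so vanishes in $\mathcal{J}^N(M_1/k)$. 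Multiplication by the monomial $y$ preserves this vanishing, and applying $\widetilde{D}$ gives zero.

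The main pitfall is keeping the two $A$-module structures on $\mathcal{J}^N(M_1/k)$ straight: $\widetilde{D}$ is $A$-linear via the first tensor factor, while $d^N_{M_1/k}$ and the annihilator hypotheses live naturally on the second. The identity $1 \otimes b - b \otimes 1 = d^1 b$ is precisely the translator between them, and both arguments are at heart just the binomial expansion of this identity, truncated by the $I_{A/k}^{N+1}$-filtration.
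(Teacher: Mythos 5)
Your argument is correct and runs on the same mechanism as the paper's proof: expand products of elements $b\otimes 1$ and $1\otimes b$ via $d^1b=1\otimes b-b\otimes 1$, truncate by the vanishing of $I_{A/k}^{N+1}$ in $\mathcal J^N(-/k)$, and pull first-factor scalars through the $A$-linear map $\widetilde{D}$ against the annihilator hypotheses. The only difference is cosmetic and in your favour: by expanding a product of $N+1$ arbitrary elements of $I$ you get the uniform bound $K=N+1$ without invoking finite generation of $I$, whereas the paper argues generator-by-generator from $(f_i\otimes 1-1\otimes f_i)^{N+1}=0$ and settles for $K=(N+1)\cdot k$ with $k$ the number of generators.
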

\begin{proof} Denote by $I_{A/k}$\, the diagonal ideal, generated by all elements $(a\otimes 1-1\otimes a)$\,.
\begin{enumerate}[1]
\item Obviously the kernel of $\widetilde{D}$\, contains $I\cdot \mathcal J^N(M_1/k)=\overline{I\otimes_kM_1}$\,. We show that the ideal $\overline{I\otimes_kA}$\, contains $\overline{A\otimes_k I^K}$\, for some $K\in \mathbb N$\,.\\
We know that $I$ is finitely generated, $I=(f_1,...,f_k)$\,. In $\mathcal J^N(A/k)$\, we have $I_{A/k}^{N+1}=0$\, containing the elements 
$$(f_i\otimes 1-1\otimes f_i)^{N+1}= f_i\otimes \omega +1\otimes f_i^{N+1}\quad \text{and}$$ 
$1\otimes f_i^{N+1}\in \overline{I\otimes_kA}$\,. \\
It follows that there exists some $K\in \mathbb N$\, such that $\overline{I\otimes_kA}$\, contains $\overline{A\otimes_kI^K}$\, (one can take $K=(N+1)\cdot k$.)\, \\
Then, $\overline{I\otimes_kM}$\, contains $\overline{A\otimes_kM/I^K\cdot A\otimes M}$\,. Thus, the $A$-linear map $\widetilde{D}$\, factors through 
$$\overline{A\otimes M_1/ A\otimes I^K\cdot M_1}=\mathcal J^N((M_1/I^KM_1)/k).$$
\item By the same arguement, if $\ann(M_1)=I$\,, there is $K\in \mathbb N$\, such that $\ann(\mathcal J^N(M_1/k))$\, contains $\overline{I^K\otimes A}$\,.  Then, since  $\overline{A\otimes I}$\, is contained in the  annihilator of $\mathcal J^N(M_1/k)=\overline{A\otimes_kM}$, we have 
$$\overline{I^K\otimes A}\subseteq \overline{A\otimes I}\subset \ann(\mathcal J^N(M/k)).$$ Since $\widetilde{D}$\, is $A$-linear, the result follows.
\end{enumerate}
\end{proof}
We prove the following basic
\begin{theorem}\mylabel{thm:T101} Let $k\longrightarrow A$\, be a  homomorphism of finite type of noetherian rings  of characteristic zero of relative dimension (fibre dimension) $\geq 1$\, and $M_1,M_2$ be  finitely generated   $A$-modules.  Then, the inclusion 
$$DO^N_{A/k}(M_1,M_2)\subsetneq DO_{A/k}^{N+1}(M_1,M_2)$$ is for all $N\geq 0$\, strict in the following cases:
\begin{enumerate}[1]
\item $M_1=M_2$, or more  generally $\ann(M_1)=\ann(M_2)=I$\, and $\dim(\supp(M_1))$\, has fibre dimension greater than zero over $k$\,.
\item $M_1,M_2$\, are nontorsion modules on $A$.
\item  The module $M_1$\, is nontorsion and $M_2$\, is a torsion module with $V(\ann(M_2))=V(I)$\, having fibre dimension greater than zero over $k$.
\end{enumerate}
\end{theorem}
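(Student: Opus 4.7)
The plan is to construct, in each case, a differential operator of order exactly $N+1$ by exhibiting an $A$-linear map $\widetilde D\colon \mathcal J^{N+1}(M_1/k)\longrightarrow M_2$ whose restriction to the symbol piece $I_{A/k}^{N+1}\cdot\mathcal J^{N+1}(M_1/k)$ is nonzero. By the jet-bundle exact sequence such a $\widetilde D$ cannot factor through $\mathcal J^{N}(M_1/k)$, so the associated operator lies in $DO^{N+1}\setminus DO^{N}$.

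The first step is to localize to a model on which both modules are free over a smooth base with an explicit coordinate. In each case I identify the relevant closed subscheme $Y\subseteq \Spec A$: in case~1, $Y=\supp M_1=\supp M_2=V(I)$; in case~2, $Y=\Spec A$ (both modules are non-torsion, hence have full support); in case~3, $Y=\supp(M_2)=V(\ann M_2)$. By hypothesis $Y$ has fibre dimension $\geq 1$ over $k$, so by generic smoothness (available in characteristic zero) and generic freeness, after localizing at a suitably chosen generic point of $Y$ I may assume that $\bar A:=\mathcal O_Y$ is smooth of relative dimension $\geq 1$ over its image in $\Spec k$ and that each $M_i\otimes_A\bar A$ is a free nonzero $\bar A$-module.

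At this local model I pick a coordinate $x_1\in\bar A$ and use the explicit presentation
$$\mathcal J^{N+1}(\bar A/k)\cong \bar A[d^1x_j]/I_{\bar A/k}^{N+2}$$
from the start of Section~2.1, together with the identification $I_{\bar A/k}^{N+1}/I_{\bar A/k}^{N+2}\cong \Omega^{(1)}(\bar A/k)^{\otimes^s(N+1)}$ from \prettyref{prop:P20124}. Fixing basis elements $e\in M_1\otimes_A\bar A$ and $e'\in M_2\otimes_A\bar A$, I define $\widetilde D$ by sending the symbol generator $(d^1x_1)^{N+1}\cdot e$ to $e'$ and sending every element of lower jet-order to $0$. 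Characteristic zero makes $(N+1)!$ invertible, which guarantees both that this assignment is well-defined (no lower-order relation collapses $(d^1x_1)^{N+1}$) and that the resulting map is genuinely nonzero on the symbol part. The corresponding operator is, on the model, essentially $\partial^{N+1}/\partial x_1^{N+1}$ projected and embedded suitably between components.

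Finally, to descend from the local smooth model back to $A$ and to the original $M_i$, I use \prettyref{lem:L10} to lift the operator from $V(I)$ to $\Spec A$ in cases~1 and~3, invoke \prettyref{prop:P11171} to translate between $\mathcal J^{N+1}(M_i/k)$ and $\mathcal J^{N+1}((M_i/I^KM_i)/k)$ when the annihilator forces a factorization, and clear denominators to pass from the localization to $A$ itself (any $\widetilde D$ over $A_S$ is obtained from one over $A$ after multiplying by an element of $S$, and such multiplication preserves the nonvanishing of the symbol). The main obstacle is case~1: there one must check that an operator supported on $V(I)$ actually has a nonvanishing symbol as a map between the ambient $A$-modules, which is ensured by the compatibility in \prettyref{lem:L10} together with \prettyref{prop:P11171}(1); cases~2 and~3 are easier since $M_1$ already has generic-rank-one support in $\Spec A$ and the construction can be performed directly without passing to a quotient.
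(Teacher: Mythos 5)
Your overall strategy -- exhibit an explicit operator of exact order $N+1$ on a ``generic smooth model with free modules'' and then transport it back -- is genuinely different from the paper, which instead localizes at the generic points to an \emph{Artinian} local ring $(A,\mathfrak m,\kappa)$ with $\trdeg(\kappa/k)\geq 1$, shows the symbol module $I_{A/k}^{N}M_1/I_{A/k}^{N+1}M_1$ is nonzero there, and then uses a depth/Ext argument (Eisenbud, Prop.\ 18.4) to conclude that $DO^{N}/DO^{N-1}\neq 0$, finishing by coherence. The problem is that your local model does not exist under the stated hypotheses, and this is not a cosmetic issue. First, localizing at a \emph{generic point} of $Y$ produces a zero-dimensional local ring, so ``smooth of relative dimension $\geq 1$ over $k$'' is incoherent at that stage; what survives of the positive fibre dimension is only the transcendence degree of the residue field, which is exactly the form in which the paper exploits it. Second, even if you instead shrink to a dense open subset of $Y$, generic smoothness in characteristic zero applies to the \emph{reduced} scheme, while $Y=V(I)$ with $I=\ann(M_1)=\ann(M_2)$ may be everywhere nonreduced, and the $M_i$ need not be locally free at any point of $Y$: take $A=k[x,y]$, $M_1=M_2=k[x,y]/(y^2)\oplus k[x,y]/(y)$, so $I=(y^2)$ and no localization of $A/I$ is smooth nor makes $M_i$ free. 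The same phenomenon occurs in your case~2 when $A$ itself is nonreduced (the theorem allows this), and in case~3 for $M_2$ along $\supp(M_2)$. Without smoothness of the model and freeness of the modules, $\mathcal J^{N+1}$ is not projective, the jet filtration does not split, and your definition of $\widetilde D$ (``send $(d^1x_1)^{N+1}e$ to $e'$ and everything of lower jet-order to $0$'') is not well posed: a map prescribed on the symbol submodule need not extend to the whole jet module. That extension problem is precisely what the paper's depth-zero/Ext argument over the Artinian localization solves, and your proposal contains no substitute for it.

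Two further points. Your transport steps are individually plausible -- clearing denominators works because $\mathcal J^{N+1}(M_1/k)$ is a finite $A$-module and jets commute with localization (\prettyref{cor:C19127}, \prettyref{lem:L30111}), and for modules killed by $I$ the order over $A$ and over $A/I$ match (this is the content behind \prettyref{lem:L10} and \prettyref{prop:P6}), which is how the paper reduces case~1 to case~2 -- so that part of your outline is consistent with the paper. But your treatment of case~3 is off target: the assertion that it ``can be performed directly without passing to a quotient'' ignores that any operator into the torsion module $M_2$ factors through $M_1/I^K M_1$ (\prettyref{prop:P11171}), so the exact order must be detected along $\supp(M_2)$, not at the generic points of $\Spec A$; the paper's proof of case~3 consists exactly of this reduction to case~2 over $A/I^K$. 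In the special situation where $A$ is reduced (say a domain), the relevant component of $Y$ is generically smooth and the modules are generically free, your explicit $\partial^{N+1}/\partial x_1^{N+1}$ construction does work and coincides with the paper's rank-comparison argument on the smooth locus; but as written the proposal does not prove the theorem in the stated generality.
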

\begin{proof}
\begin{enumerate}[1]
\item The first case is easily reduced to the case (2). By \prettyref{lem:L10}, there is a surjection
$$DO^N_{A/k}(M_1,M_2)\twoheadrightarrow DO^N_{(A/I)/k}(M_1,M_2)$$
which induces a surjection
$$DO^N_{A/k}(M_1,M_2)/DO^{N-1}_{A/k}(M_1,M_2)\twoheadrightarrow DO^N_{(A/I)/k}(M_1,M_2)/DO^{N-1}_{(A/I)/k}(M_1,M_2).$$
\item  First, we treat for reasons of intuition the case where $A$\, is an integral $k$-algebra with $k$ being a field and $M_1,M_2$ being torsion free, or more generally nontorsion $A$-modules.\\  
If $M_i\cong A^{\oplus r_i}$\, is free, 
$$DO^N(M_1,M_2)=\Hom_A(\mathcal J^N(A/k)^{\oplus r_1}, A^{\oplus r_2})=DO^N(A,A)^{\oplus r_1\cdot r_2}$$ and we are reduced to the case $r_1=r_2=1$\,. We have the standard exact sequence 
$$j^N(A/k):0\longrightarrow  \mathcal I_{\Delta}^N/\mathcal I_{\Delta}^{N+1}\longrightarrow \mathcal J^N(A/k)\longrightarrow \mathcal J^{N-1}(A/k)\longrightarrow 0$$
 Let $X_{ns}\subset X=\Spec A$\, be the set of nonsingular points, which is a nonempty Zariski-open subset. Over $X_{ns}$\,, the $A$-modules $\mathcal J^N(A/k),\\
  \mathcal J^{N-1}(A/k)$\, are projective of different ranks,  since, considering the exact sequence $j^N(A/k)$\, we know that $I_{A/k}^N/I_{A/k}^{N+1}$\, is isomorphic to the $N^{th}$\, symmetric power of the relative cotangential sheaf. This is nonzero because the relative dimension of $A/k$\, was assumed to be greater than one. Thus, the  duals  $DO^N(A/k)$\, and $DO^{N-1}(A/k)$\, have over $X_{ns}$\, different ranks. It follows that the inclusion $DO^{N-1}(A/k)\subsetneq DO^N(A/k)$\,\, is strict, since the inclusion of the corresponding Zariski- sheaves on $\Spec A$ is strict.\\
This settles the free and projective case.\\
If $M_1$ and $M_2$ are any  nontorsion modules , there is an open subset $U\subset \Spec A$\, where $M_1$ and $M_2$\, are free and 
$DO_{U/k}^N(M_1,M_2)/DO_{U/k}^{N-1}(M_1,M_2)$\, is nonzero. Since \\
$DO^N_{A/k}(M_1,M_2)/DO^{N-1}_{A/k}(M_1,M_2)$\, is finitely generated and coherent, it follows that this module is nonzero.\\
We now treat the general nontorsion case. So let $k\longrightarrow A$\, be a homomorphism of noetherian rings which makes $A$ a finitely generated $k$-algebra and let $M_1,M_2$\, be nontorsion modules on $A$. Let $\eta_i, i=1,...,l$\, be the generic points of $A$. We want to show that the $A$-module\\ 
$DO^N_{A/k}(M_1,M_2)/DO^{N-1}_{A/k}(M_1,M_2)$\, is nonzero. To this aim, it suffices to show that this module is nonzero at the generic points of $A$, which reduces by the localization properties of the jet modules to the case, where $(A,\mathfrak{m}, \kappa)$\, is an Artinian $k$-algebra, essentially of finite type over $k$\, such that the resiude field $\kappa$\, has transcendence degree $\geq 1$\, over $k$.
First, we show that $l(\mathcal J^{N-1}(A/k))< l(\mathcal J^N(A/k))$\, for each $N\geq 1$\,, where $l(-)$\, denotes the length function. By the standard jet-module sequence and the additivity of the lenght function, we only have to show that the $A$-module $I_{A/k}^N/I_{A/k}^{N+1}$\, is nonzero. Let $A\twoheadrightarrow \kappa$\, be the canonical surjection. There is a surjection $I_{A/k}\twoheadrightarrow I_{\kappa/k}$\, which induces surjections
$$I_{A/k}^N/I_{A/k}^{N+1}\twoheadrightarrow I_{\kappa/k}^N/I_{\kappa/k}^{N+1}\quad \forall n\in \mathbb N.$$
The last module is equal to $\Omega^{(1)}(\kappa/k)^{\otimes^s N}$\, since we work in characteristic zero and this module is nonzero, since $\text{trdeg}(\kappa/k)$\, is greater or equal to one.  The case for general $M_1$\, is basically the same.
There is a surjection
$$I_{A/k}^N\cdot M/I_{A/k}^{N+1}\cdot M\twoheadrightarrow I_{\kappa/k}^N\cdot \overline{M}/I_{\kappa/k}^{N+1}\cdot \overline{M},$$ where $\overline{M}:=M\otimes_A\kappa$\,. Since $\kappa$\, is a field, $\overline{M}$\, is free, and the last quantity is isomorphic to $\Omega^{(1)}(\kappa/k)^{\otimes^s N}\otimes_{\kappa}\overline{M}$\, which is also nonzero.\\
Thus for each $A$-module $M$\, and each $N\in \mathbb N$\,, we have strict inequality
$l(\mathcal J^{N-1}(M/k)) <l(\mathcal J^N(M/k))$ coming from the exact sequence of nonzero $A$-modules
$$j^N_{A/k}(M): 0\longrightarrow I_{A/k}^N\cdot M/I_{A/k}^{N+1}\cdot M\longrightarrow \mathcal J^N(M/k)\longrightarrow \mathcal J^{N-1}(A/k)\longrightarrow 0.$$
By \cite{Eisenbud}[chapter 18, Proposition 18.4, p. 454], for given $A$-modules $N_1,N_2$\,, the minimal number $r$ such that $Ext^1_A(N_1,N_2)\neq 0$\, is given by  
$$r=\text{depth}(\ann(N_1),N_2).$$ The ideal $\ann(N_1)$ is contained in the maximal ideal, unless $N_1=0$\, which we want to exclude, and, the maximal ideal consists of zero divisors, since $A$ was assumed to be Artinian. Thus the depth is always equal to zero. Consequently, taking $\Hom_A(-,M_2)$\, of the exact sequence $j^N_{A/k}(M_1)$\,, we get an exact sequence
\begin{gather*}do^N_{A/k}(M_1,M_2): 0\longrightarrow DO^{N-1}_{A/k}(M_1,M_2)\longrightarrow DO^{N}_{A/k}(M_1,M_2)\\
\longrightarrow \Hom_A(I_{A/k}^N\cdot M_1/I_{A/k}^{N+1}\cdot M_1,M_2)\longrightarrow 0,
\end{gather*} where surjectivity on the right comes from the vanishing of  the $Ext^1$\, and the last $A$-module is nonzero by what has been just said. Thus, the quotient module
$$ DO^N_{A/k}(M_1,M_2)/DO^{N-1}_{A/k}(M_1,M_2)$$
is nonzero.
The general case where  $A/k$\, is an arbitrary finite type $k$-algebra follows from the fact, that the last quotient is a coherent sheaf on $\Spec A$\, which is nonzero at the generic points of $\Spec A$\, and thus nonzero.
\item Next, if $M_2$\, is a torsion module with $\ann(M_2)=I$, by the previous proposition , for each $N\in \mathbb N$\, there is $K_2=K(N)\in \mathbb N$\, such that $\widetilde{D}$\, factors over $\mathcal J^N((M_1/I^{K_2}\cdot M_1)/k)\longrightarrow M_2$\,.\\ 
 But then,  $D:M_1\longrightarrow M_2$\, factors through a differential operator $ D': M_1/I^{K_2}\cdot M_1\longrightarrow M_2.$
 Thus, we have shown that for fixed $M_1,M_2$\, with $\ann(M_2)=I$\, and $M_1$\, being a nontorsion module, for each $N\in \mathbb N$\, there is a surjection
 $$DO^N_{A/k}(M_1,M_2)\twoheadrightarrow DO^N_{(A/I^{K(N)}A)/k}((M_1/I^{K(N)}\cdot M_1), M_2).$$
  Observe, that if $D: M_1\longrightarrow M_2$\, factors over $M_1/I^K\cdot M_1$\,, then it certainly factors over $M_1/I^L\cdot M_1$\, for each $L\geq K$\,. Putting $K=\max(K(N),K(N-1))$\,, we get a surjection
 \begin{gather*}DO^N_{A/k}(M_1,M_2)/DO^{N-1}_{A/k}(M_1,M_2)\twoheadrightarrow \\
 DO^N_{(A/I^K)/k}((M_1/I^KM_1),M_2)/DO^{N-1}_{(A/I^K)/k}(M_1/I^KM_1, M_2).
 \end{gather*}
 Fixing $N\in \mathbb N_0$, we know by case (2), that the module on the left hand side is nonzero and the claim follows.
\end{enumerate}
\end{proof}
\begin{remark}\mylabel{rem:R14111} Observe, that, if $A/k$\, is a finitely generated Artinian $k$-algebra, there is an $N\in \mathbb N$\, such that $I_{\Delta}^N=0$\, in $A\otimes_kA$\, and so $\mathcal J^M(A/k)=A\otimes_kA$\, for $M\geq N$\,. In particular $DO^M(A,A)=\Hom_A(A\otimes_kA,A)$\, and the statement is not true. 
\end{remark} 
\bibliography{BasicsDiffops}
\bibliographystyle{plain}
\noindent
\emph{E-Mail-adress:}nplc@online.de
\end{document}